\newenvironment{proofof}[1]{{\noindent\bf Proof of #1 }}{\hfill$\square$}
\newcommand{\Abb}[5]{\begin{array}{ccccc}#1&:&#2&\longrightarrow&#3\\{}&{}&#4&\longmapsto&#5\end{array}}
\newcommand{\ol}{\overline}
\newcommand{\nz}{\mathbb{N}}
\newcommand{\rz}{\mathbb{R}}
\newcommand{\rt}{\rz^3}
\DeclareMathOperator{\p}{\partial}
\newcommand{\na}{\nabla}
\DeclareMathOperator{\Grad}{Grad}
\DeclareMathOperator{\rot}{rot}
\DeclareMathOperator{\Rot}{Rot}
\renewcommand{\div}{\operatorname{div}}
\DeclareMathOperator{\Div}{Div}
\newcommand{\T}{\mathcal{T}}
\newcommand{\E}{\mathcal{E}}
\newcommand{\B}{\mathcal{B}}
\newcommand{\A}{\mathcal{A}}
\newcommand{\eps}{\varepsilon}
\newcommand{\om}{\Omega}
\newcommand{\equi}{\Leftrightarrow}
\def\impl{\Rightarrow}
\newcommand{\qequi}{\quad\equi\quad}
\newcommand{\qimpl}{\quad\impl\quad}
\DeclareMathOperator{\supp}{supp}
\DeclareMathOperator{\dist}{dist}
\newcommand{\dvec}[3]{\begin{bmatrix}#1\\#2\\#3\end{bmatrix}}
\DeclareMathOperator{\id}{id}
\DeclareMathOperator{\Lebesgue}{\mathsf{L}}
\newcommand{\Lgen}[2]{\Lebesgue^{#1}_{#2}}
\def\Li{\Lgen{\infty}{}}
\def\Liom{\Li(\om)}
\def\Lo{\Lgen{1}{}}
\def\Lt{\Lgen{2}{}}
\def\Ltperp{\Lgen{2}{\bot}}
\def\Ltom{\Lt(\om)}
\def\LtXi{\Lt(\Xi)}
\def\Ltperpom{\Ltperp(\om)}
\def\Ltepsom{\Lgen{2}{\eps}(\om)}
\def\L6{\Lgen{6}{}}
\DeclareMathOperator{\Sobolev}{\mathsf{H}}
\newcommand{\Hgen}[3]{\overset{#3}{\Sobolev}{}^{#1}_{#2}}
\newcommand{\Hgentr}[3]{\overset{#3}{\utilde{\Sobolev}}{}^{#1}_{#2}}
\def\Ho{\Hgen{1}{}{}}
\def\Ht{\Hgen{2}{}{}}
\def\Hoom{\Ho(\om)}
\def\Hoperp{\Hgen{1}{\perp}{}}
\def\Hoperpom{\Hgen{1}{\perp}{}(\om)}
\def\Hoc{\Hgen{1}{}{\circ}}
\def\Hocom{\Hoc(\om)}
\def\Hoctr{\Hgentr{1}{}{\circ}}
\def\Hoct{\Hgen{1}{\Gamma_\tau}{\circ}}
\def\Hocttr{\Hgentr{1}{\Gamma_\tau}{\circ}}
\def\Hocn{\Hgen{1}{\Gamma_\nu}{\circ}}
\def\Hocgn{\Hgen{1}{\gamma_\nu}{\circ}}
\def\Hocntr{\Hgentr{1}{\Gamma_\nu}{\circ}}
\def\Hocgntr{\Hgentr{1}{\gamma_\nu}{\circ}}
\def\Hocgt{\Hgen{1}{\gamma_\tau}{\circ}}
\def\Hoctom{\Hoct(\om)}
\def\Hocnom{\Hocn(\om)}
\def\Hocgng{\Hocgn(\Xi)}
\def\HoctUT{\Hgen{1}{\Upsilon_0}{\circ}(\Theta)}
\def\HoctUTt{\Hgen{1}{\tilde\Upsilon_0}{\circ}(\tilde\Theta)}
\DeclareMathOperator{\CSobolev}{\mathcal{H}}
\newcommand{\cHgen}[3]{\overset{#3}{\CSobolev}{}^{#1}_{#2}}
\def\cHoc{\cHgen{1}{}{\circ}}
\def\cHocom{\cHoc(\om)}
\def\cHoct{\cHgen{1}{\Gamma_\tau}{\circ}}
\def\cHoctom{\cHoct(\om)}
\DeclareMathOperator{\Cont}{\mathsf{C}}
\newcommand{\Cgen}[2]{\overset{#2}{\Cont}{}^{#1}}
\def\Czo{\Cgen{0,1}{}}
\def\Czoc{\Cgen{0,1}{\circ}}
\def\Czoct{\Cgen{0,1}{\circ}_{\Gamma_\tau}}
\def\Czocn{\Cgen{0,1}{\circ}_{\Gamma_\nu}}
\def\Ci{\Cgen{\infty}{}}
\def\Cic{\Cgen{\infty}{\circ}}
\def\Cicrt{\Cic(\rt)}
\def\Cicn{\Cic_{\Gamma_\nu}}
\def\Cict{\Cic_{\Gamma_\tau}}
\def\Cictom{\Cic_{\Gamma_\tau}(\om)}
\def\Cicnom{\Cic_{\Gamma_\nu}(\om)}
\def\Cictg{\Cic_{\gamma_\tau}(\Xi)}
\DeclareMathOperator{\dirichlet}{\mathcal{H}}
\newcommand{\qharmdi}[2]{\dirichlet^{#1}_{#2}(\om)}
\newcommand{\harmdieps}{\qharmdi{}{\eps}}
\newcommand{\normdst}{\hspace{-0.4ex}}
\newcommand{\scp}[2]{\langle#1,#2\rangle}
\newcommand{\scpLtom}[2]{\scp{#1}{#2}_{\Ltom}}
\newcommand{\scpLtXi}[2]{\scp{#1}{#2}_{\LtXi}}
\newcommand{\scpLtXit}[2]{\scp{#1}{#2}_{\Lt(\tilde\Xi)}}
\newcommand{\norm}[1]{\left|\normdst\left|#1\right|\normdst\right|}
\newcommand{\norms}[1]{|\normdst|#1|\normdst|}
\newcommand{\normsLtom}[1]{\norms{#1}_{\Ltom}}
\newcommand{\normsLtg}[1]{\norms{#1}_{\LtXi}}
\newtheorem{lem}{Lemma}[section]
\newtheorem{defi}[lem]{Definition}
\newtheorem{theo}[lem]{Theorem}
\newtheorem{rem}[lem]{Remark}
\DeclareMathOperator{\rotspace}{\mathsf{R}}
\newcommand{\rotgen}[2]{\overset{#2}{\rotspace}{}_{#1}}
\newcommand{\rotgenom}[2]{\rotgen{#1}{#2}(\om)}
\newcommand{\rotgeng}[2]{\rotgen{#1}{#2}(\Xi)}
\newcommand{\rom}{\rotgenom{}{}}
\newcommand{\rcom}{\rotgenom{}{\circ}}
\newcommand{\rzom}{\rotgenom{0}{}}
\newcommand{\rzg}{\rotgen{0}{}(\Xi)}
\newcommand{\rcn}{\rotgen{\Gamma_\nu}{\circ}}
\newcommand{\rct}{\rotgen{\Gamma_\tau}{\circ}}
\newcommand{\rcng}{\rotgeng{\gamma_\nu}{\circ}}
\newcommand{\rctg}{\rotgeng{\gamma_\tau}{\circ}}
\newcommand{\rczng}{\rotgeng{\gamma_\nu,0}{\circ}}
\newcommand{\rcztg}{\rotgeng{\gamma_\tau,0}{\circ}}
\newcommand{\rcthat}{\rotgen{\hat\Gamma_{\tau,k}}{\circ}}
\newcommand{\rcnom}{\rotgenom{\Gamma_\nu}{\circ}}
\newcommand{\rctom}{\rotgenom{\Gamma_\tau}{\circ}}
\newcommand{\rcznom}{\rotgenom{\Gamma_\nu,0}{\circ}}
\newcommand{\rcztom}{\rotgenom{\Gamma_\tau,0}{\circ}}
\DeclareMathOperator{\crotspace}{\mathcal{R}}
\newcommand{\crotgen}[2]{\overset{#2}{\crotspace}{}_{#1}}
\newcommand{\crotgenom}[2]{\crotgen{#1}{#2}(\om)}
\newcommand{\crotgeng}[2]{\crotgen{#1}{#2}(\Xi)}
\newcommand{\crcom}{\crotgen{}{\circ}(\om)}
\newcommand{\crcn}{\crotgen{\Gamma_\nu}{\circ}}
\newcommand{\crcthat}{\crotgen{\hat\Gamma_{\tau,k}}{\circ}}
\newcommand{\crcng}{\crotgeng{\gamma_\nu}{\circ}}
\newcommand{\crctg}{\crotgeng{\gamma_\tau}{\circ}}
\newcommand{\crczng}{\crotgeng{\gamma_\nu,0}{\circ}}
\newcommand{\crctom}{\crotgenom{\Gamma_\tau}{\circ}}
\newcommand{\crcnom}{\crotgenom{\Gamma_\nu}{\circ}}
\newcommand{\crcztom}{\crotgenom{\Gamma_\tau,0}{\circ}}
\newcommand{\crcznom}{\crotgenom{\Gamma_\nu,0}{\circ}}
\DeclareMathOperator{\divspace}{\mathsf{D}}
\newcommand{\divgen}[2]{\overset{#2}{\divspace}{}_{#1}}
\newcommand{\divgenom}[2]{\divgen{#1}{#2}(\om)}
\newcommand{\divgeng}[2]{\divgen{#1}{#2}(\Xi)}
\newcommand{\dom}{\divgenom{}{}}
\newcommand{\dcom}{\divgenom{}{\circ}}
\newcommand{\dzom}{\divgenom{0}{}}
\newcommand{\dzg}{\divgeng{0}{}}
\newcommand{\dz}{\divgen{0}{}}
\newcommand{\dcnhat}{\divgen{\hat\Gamma_{\nu,k}}{\circ}}
\newcommand{\dcn}{\divgen{\Gamma_\nu}{\circ}}
\newcommand{\dcng}{\divgeng{\gamma_\nu}{\circ}}
\newcommand{\dctg}{\divgeng{\Gamma_\tau}{\circ}}
\newcommand{\cdcnhat}{\cdivgen{\hat\Gamma_{\nu,k}}{\circ}}
\newcommand{\dcz}{\divgen{0}{\circ}}
\newcommand{\dczng}{\divgeng{\gamma_\nu,0}{\circ}}
\newcommand{\dcnom}{\divgenom{\Gamma_\nu}{\circ}}
\newcommand{\dcznom}{\divgenom{\Gamma_\nu,0}{\circ}}
\newcommand{\dcztom}{\divgenom{\Gamma_\tau,0}{\circ}}
\DeclareMathOperator{\cdivspace}{\mathcal{D}}
\newcommand{\cdivgen}[2]{\overset{#2}{\cdivspace}{}_{#1}}
\newcommand{\cdivgenom}[2]{\cdivgen{#1}{#2}(\om)}
\newcommand{\cdivgeng}[2]{\cdivgen{#1}{#2}(\Xi)}
\newcommand{\cdcnom}{\cdivgenom{\Gamma_\nu}{\circ}}
\newcommand{\cdcznom}{\cdivgenom{\Gamma_\nu,0}{\circ}}
\newcommand{\cdcztom}{\cdivgenom{\Gamma_\tau,0}{\circ}}
\newcommand{\cdcom}{\cdivgenom{}{\circ}}
\newcommand{\cdcng}{\cdivgeng{\gamma_\nu}{\circ}}
\newcommand{\cdczng}{\cdivgeng{\gamma_\nu,0}{\circ}}
\newcommand{\cdcztg}{\cdivgeng{\gamma_\tau,0}{\circ}}
\newcommand{\hoom}{\Hoom}
\newcommand{\hocom}{\Hocom}
\newcommand{\non}{\nonumber}
\renewcommand{\norm}[1]{\left|#1\right|}
\newcommand{\normb}[1]{\big|#1\big|}
\def\undertilde#1{\mathord{\vtop{\ialign{##\crcr
$\hfil\displaystyle{#1}\hfil$\crcr\noalign{\kern1.5pt\nointerlineskip}
$\hfil\tilde{}\hfil$\crcr\noalign{\kern1.5pt}}}}}
\def\wideundertilde#1#2{\mathord{\vtop{\ialign{##\crcr
$\hfil\displaystyle{#2}\hfil$\crcr\noalign{\kern2pt\nointerlineskip}
$\hfil\widetilde{\hspace*{#1mm}}\hfil$\crcr\noalign{\kern2pt}}}}}
\def\utilde#1{\undertilde{#1}}
\title[\sc The Maxwell Compactness Property]
{\Large\sf The Maxwell Compactness Property\\
in Bounded Weak Lipschitz Domains
with Mixed Boundary Conditions}
\author{Sebastian Bauer}
\author{Dirk Pauly}
\author{Michael Schomburg}
\address{Fakult\"at f\"ur Mathematik,
Universit\"at Duisburg-Essen, Campus Essen, Germany}
\email[Sebastian Bauer]{sebastian.bauer.seuberlich@uni-due.de}
\email[Dirk Pauly]{dirk.pauly@uni-due.de}
\email[Michael Schomburg]{michael.schomburg@uni-due.de}
\keywords{Maxwell compactness property, weak Lipschitz domain,
Maxwell estimate, Helmholtz decomposition, electro-magneto static,
mixed boundary conditions, vector potentials}
\subjclass{}
\date{\today}
\thanks{This contribution has been published in \cite{bauerpaulyschomburgmaxcompweaklip}.}
\begin{document}


%


\begin{abstract}
Let $\om\subset\rt$ be a bounded weak Lipschitz domain with boundary $\Gamma:=\p\om$
divided into two weak Lipschitz submanifolds $\Gamma_\tau$ and $\Gamma_\nu$
and let $\eps$ denote an $\Li$-matrix field inducing an inner product in $\Ltom$.
The main result of this contribution is the so called `Maxwell compactness property',
that is, the Hilbert space
$$\big\{E\in\Ltom\,:\,\rot E\in\Ltom,\,\div\eps E\in\Ltom,\,
\nu\times E|_{\Gamma_{\tau}}=0,\,\nu\cdot\eps E|_{\Gamma_{\nu}}=0\big\}$$
is compactly embedded into $\Ltom$. We will also prove some canonical applications,
such as Maxwell estimates, Helmholtz decompositions and
a static solution theory. Furthermore, a Fredholm alternative 
for the underlying time-harmonic Maxwell problem and 
all corresponding and related results for exterior domains 
formulated in weighted Sobolev spaces are straightforward.
\end{abstract}


\maketitle
\tableofcontents
\newpage


\section{Introduction}

One of the main and most important tools in the theory of Maxwell's equations 
is the compact embedding of vector fields possessing weak divergence and rotation in $\Lt$, 
subject to appropriate possibly mixed boundary conditions, into $\Lt$.

Let $ \om\subset\rt $ denote a bounded domain with boundary $ \Gamma := \p\om $, 
where $ \Gamma $ is divided into two relatively open subsets $ \Gamma_\tau $ 
and its complement $ \Gamma_\nu := \Gamma\setminus\ol\Gamma_\tau $. 
Furthermore, let $ \eps :\om\rightarrow\rz^{3\times 3} $ 
denote a symmetric and uniformly positive definite $ \Li $-matrix field, 
which will throughout the paper be called admissible. 
The so-called Maxwell compactness property, i.e., the compactness of the embedding
\begin{align}
\label{MCP}
\rctom\cap\eps^{-1}\dcnom\hookrightarrow\Ltom,
\end{align}
has been investigated in various settings. Here $ \rctom\cap\eps^{-1}\dcnom $ 
denotes the space of all $ E\in\Ltom $ with $ \rot E \in \Ltom $ and $ \div \eps E \in \Ltom $ 
satisfying the mixed boundary conditions $ \nu \times E = 0 $ 
on $ \Gamma_\tau $ and $ \nu \cdot \eps E = 0 $ on $ \Gamma_\nu $ in a weak sense.

Historically and e.g. for full tangential boundary conditions, \eqref{MCP} has first been proved 
by a regularity argument, showing that in a sufficiently smooth setting
$\rcom\cap\eps^{-1}\dom$ is continuously embedded into $\Hoom$ (Gaffney's inequality) 
and hence contains a $\Ltom$-converging subsequence by Rellich's selection theorem.

A first result for non-smooth, more precisely cone-like, i.e., more or less strong Lipschitz, 
domains in $ \rz^N $ or even Riemannian manifolds, 
was obtained by Weck \cite{weckmax} in the case of full homogeneous boundary conditions, 
i.e., $ \Gamma_\nu = \emptyset $ or $ \Gamma_\tau = \emptyset $, 
using the general setting provided by the calculus of alternating differential forms. 
Weber \cite{webercompmax} 
found a new proof for bounded domains in $ \rt $ satisfying the uniform cone-condition, 
which is again more or less strong Lipschitz. 
This result was improved upon by Witsch \cite{witschremmax}, 
who showed that the compact embedding is valid for bounded domains of $ \rt $ 
satisfying merely the p-cusp condition for $ 1 < p < 2 $. An elementary proof for weak Lipschitz domains, 
which even holds for weak Lipschitz manifolds, was given by Picard \cite{picardcomimb}. 
Costabel \cite{costabelremmaxlip} proved the compact 
embedding by means of a weak regularity result, i.e., there holds the continuous embedding 
into $\Sobolev^{\nicefrac{1}{2}}$ and therefore the compact embedding 
into $\Sobolev^t$ for $ t < 1/2$ and into  $\Sobolev^0 = \Lt$ in particular. 
All these results have been obtained for full boundary conditions.

Kuhn \cite{kuhndiss}, using the methods developed by Weck \cite{weckmax} and comparable assumptions 
about the regularity of the boundary and the interface, obtained the compact embedding for mixed boundary conditions 
in the setting of differential forms.
Based on the techniques developed by Weber \cite{webercompmax}, Jochmann \cite{jochmanncompembmaxmixbc} 
showed the compact embedding to hold for vector fields satisfying mixed boundary conditions, if $ \om $ 
has a (strong) Lipschitz continuous boundary $ \Gamma $ 
with a (strong) Lipschitz continuous interface $ \ol\Gamma_\tau \cap \ol\Gamma_\nu $. 
More precisely, the boundary $ \Gamma $ and the interface 
can be locally represented as graphs of Lipschitz functions.

In this paper it is shown that the assumptions in \cite{jochmanncompembmaxmixbc} 
can be weakened to include domains $ \om $ with weak Lipschitz boundaries and weak Lipschitz interfaces, 
i.e., the boundary $ \Gamma $ is assumed to be a Lipschitz manifold 
and the interface a Lipschitz submanifold of $ \Gamma $. 
This assumption is weaker than the assumptions in \cite{jochmanncompembmaxmixbc}. 
Moreover, weak Lipschitz domains are relevant in various applications. 
A prominent example is the so-called two brick domain.

Our paper closely follows \cite{jochmanncompembmaxmixbc} and hence \cite{webercompmax}. 
Conveniently, the proofs from \cite{jochmanncompembmaxmixbc} carry over practically verbatim. 
However, a modification of \cite[Theorem 1]{jochmanncompembmaxmixbc} (see our Theorem \ref{satzD0L6}) 
allows for a shorter and more straightforward proof of the compact embedding.
Moreover, we will use different transformations (charts), which reduces the compact embedding
to the flat situation of a half-cube.

The main result of this contribution is the compact embedding stated in Theorem \ref{satzMKE}. 
In the last chapter we present applications of the main theorem, i.e. the Maxwell estimate, 
Helmholtz decompositions and, following the approach developed 
by Picard in \cite{picardpotential} and \cite{picardboundaryelectro}, 
a solution theory for the static Maxwell problem involving mixed boundary conditions.

Another application is the proof of Fredholm's alternative for the time-harmonic Maxwell's equations.
This is straightforward but would be beyond the scope of this paper.

Other important applications can be mentioned in connection 
with the treatment of static or time-harmonic Maxwell's equations
in exterior domains $\om_{\textsf{ext}}\subset\rt$, i.e., domains with compact complement.
In this case the compact embedding \eqref{MCP} no longer holds. On the other hand \eqref{MCP} immediately implies
the so-called local Maxwell compactness property, i.e., the compactness of the embedding
\begin{align}
\label{LMCP}
\rct(\om_{\textsf{ext}})\cap\eps^{-1}\dcn(\om_{\textsf{ext}}) 
\hookrightarrow \Lebesgue^2_{\textsf{loc}}(\om_{\textsf{ext}}).
\end{align}
This compact embedding is the main and 
and most important tool for showing, e.g., 
polynomially weighted Maxwell inequalities and Helmholtz decompositions
or the validity of Eidus' principles of limiting absorption 
and limiting amplitude \cite{eiduslabp,eiduslamp,eiduslamptwo},
which are fundamental in the theory of static or time-harmonic Maxwell's equations in exterior domains, see e.g.
\cite{leisbook,
picardpotential,picardboundaryelectro,picardlowfreqmax,picarddeco,picardweckwitschxmas,
kuhnpaulyregmax,
paulytimeharm,paulystatic,paulydeco,paulyasym,paulypoly}.

\section{Notation, preliminaries and outline of the proof}

Let $ \om\subset\rt $ be a domain, i.e., an open and connected set. We introduce the function spaces
\begin{align*}
\Cont^{0,1}(\om)&:=\left\{u:\om\rightarrow\rz:u\text{ Lipschitz continuous}\right\},\\
\Cont^k(\om)&:=\left\{u:\om\rightarrow\rz:u\text{ is k-times continuously partially differentiable}\right\},\\
\Cont^{\infty}(\om)&:=\bigcap_{k\in\nz} \Cont^k(\om),\\
\Czoc(\om)&:=\left\{u\in\Cont^{0,1}(\om): \supp u\Subset\om\right\},\\
\Cic(\om)&:=\left\{u\in\Cont^{\infty}(\om): \supp u\Subset \om\right\},
\end{align*}
where $\Theta\Subset \om$ means $\ol{\Theta}$ is compact and a subset of $\om$.
The usual Lebesgue and Sobolev spaces will be denoted by
$$\Ltom,\quad\Ltperpom:=\Ltom\cap\rz^{\bot},\qquad\Hoom,\quad\Hoperpom:=\Hoom\cap\rz^{\bot},$$
where $\bot$ means orthogonality in $\Ltom$.
We also introduce the Sobolev (Hilbert) spaces
\begin{align*}
\rom:=\left\{E\in\Ltom: \rot E \in\Ltom\right\},\quad \dom:=\left\{E\in\Ltom: \div E \in\Ltom\right\}
\end{align*}
in the distributional sense and define the test functions or vector fields
\begin{align*}
\Cictom := \{ \varphi|_\om : \varphi\in\Cic(\rt),~ \dist(\supp \varphi, \Gamma_\tau) > 0\}
\end{align*}
and 
\begin{align*}
\Czoct(\om) := \{ \varphi|_\om : \varphi\in\Czoc(\rt),~ \dist(\supp \varphi, \Gamma_\tau) > 0\}.
\end{align*}
Note that $ \Cic_\emptyset = \Ci(\ol\om) $ and $ \Czoc_\emptyset = \Czo(\ol\om) $.
Now define 
\begin{align}
\label{defstark}
\Hoctom :=\overline{\Cictom}^{\hoom}, \quad \rctom :=\overline{\Cictom}^{\rom}, \quad
\dcnom :=\overline{\Cicnom}^{\dom}
\end{align}
as closures of test functions respectively fields.
For $ \Gamma_\tau = \Gamma $ (resp. $ \Gamma_\nu = \Gamma $) we set
$$\Hocom := \Hoctom,\quad\rcom := \rctom,\quad \dcom := \dcnom.$$
Moreover, we define the closed subspaces
\begin{align*}
\rzom:=\big\{E\in\rom: \rot E = 0\big\},\quad \dzom:=\big\{E\in\dom: \div E = 0\big\}
\end{align*}
as well as $ \rcztom := \rctom \cap \rzom $ and $\dcznom := \dcnom \cap \dzom $.
Furthermore, we introduce the weak spaces
\begin{align}
\label{defschwach}
\begin{split}
\cHoctom &:= \big\{u\in\Hoom:\scpLtom{u}{\div\Phi}=-\scpLtom{\na u}{\Phi}~\text{for all}~\Phi\in\Cicnom\big\},\\
\crctom &:=\big\{E\in\rom:\scpLtom{E}{\rot \Phi} = \scpLtom{\rot E}{\Phi}~\text{for all}~ \Phi\in\Cicnom\big\},\\
\cdcnom &:=\big\{H\in\dom:\scpLtom{H}{\na\varphi} = -\scpLtom{\div H}{\varphi}~\text{for all}~ \varphi\in\Cictom\big\},
\end{split}
\end{align}
and again for $ \Gamma_\tau = \Gamma $ (resp. $ \Gamma_\nu = \Gamma $) we set
$$\cHocom := \cHoctom,\quad\crcom := \crctom,\quad \cdcom := \cdcnom.$$
In \eqref{defstark} and \eqref{defschwach} homogeneous scalar, tangential 
and normal traces on $ \Gamma_\tau $, respectively $ \Gamma_\nu $, are generalized.

\begin{rem} \label{remlipclosure}\mbox{}
\begin{itemize}
\item[\bf(i)] In definitions \eqref{defstark} and \eqref{defschwach} $ \Cict(\om) $ and $ \Cicn(\om) $ can be replaced
(by mollification) by $ \Czoct(\om) $ and $ \Czocn(\om) $, respectively. 
\item[\bf(ii)] In  \eqref{defschwach}  $ \Cict(\om) $ and $ \Cicn(\om) $ can be replaced (by continuity) 
by $ \dcn(\om) $, $ \rcn(\om) $ and $ \Hoct(\om) $, respectively. 
In the special case of no boundary conditions,
for this continuity argument to hold the density of $\Ci(\ol\om)$ resp. $\Czo(\ol\om)$
in $ \dom $, $ \rom $ and $ \Hoom $, respectively, is needed. 
This is valid e.g. if $\om$ has the segment property, 
which is a rather weak assumption and basically means that $\Gamma$ is continuous.
\end{itemize}
\end{rem}

Moreover we set 
\begin{align*}
\crcztom := \crctom \cap \rzom ,\quad \cdcznom := \cdcnom \cap \dzom.
\end{align*}
Note that by switching $ \Gamma_\tau $ and $ \Gamma_\nu $ 
we can define the respective boundary conditions on the other part of the boundary as well.

\begin{lem}
\label{inklusionen}
\mbox{}
The following inclusions hold:
\begin{itemize}
\item[\bf(i)] $\Hoctom\subset\cHoctom,\quad\rctom\subset\crctom,\quad\dcnom\subset\cdcnom$
\item[\bf(ii)] $\na\Hoctom\subset\rcztom,\quad\na\cHoctom\subset\crcztom$
\item[\bf(iii)] $\rot\rctom\subset\dcztom,\quad\rot\crctom\subset\cdcztom$
\end{itemize}
\end{lem}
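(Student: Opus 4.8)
The plan is to reduce all six inclusions to two elementary mechanisms: a density--continuity argument for every inclusion whose left-hand side is a closure of test fields, and a direct substitution into the defining weak identity for the two remaining ``weak'' inclusions.

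For part \textbf{(i)}, and for the strong halves $\na\Hoctom\subset\rcztom$ and $\rot\rctom\subset\dcztom$ of \textbf{(ii)} and \textbf{(iii)}, I would argue by approximation. Consider for instance $E\in\rctom$ with $\varphi_n\in\Cictom$ such that $\varphi_n\to E$ in $\rom$. Since every $\varphi_n$ vanishes near $\Gamma_\tau$ while every $\Phi\in\Cicnom$ vanishes near $\Gamma_\nu$, the boundary term in Green's formula $\scpLtom{\rot\varphi_n}{\Phi}-\scpLtom{\varphi_n}{\rot\Phi}=\int_{\Gamma}(\nu\times\varphi_n)\cdot\Phi\,ds$ splits over $\Gamma_\tau\cup\Gamma_\nu$ and vanishes on each piece; letting $n\to\infty$ (the map $E\mapsto\scpLtom{\rot E}{\Phi}-\scpLtom{E}{\rot\Phi}$ being continuous on $\rom$) gives the defining identity of $\crctom$, hence $E\in\crctom$. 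The inclusions $\Hoctom\subset\cHoctom$ and $\dcnom\subset\cdcnom$ follow verbatim from the Green formulas $\scpLtom{\na\varphi_n}{\Phi}+\scpLtom{\varphi_n}{\div\Phi}=\int_{\Gamma}\varphi_n(\nu\cdot\Phi)\,ds$ and $\scpLtom{\na\varphi}{\Phi_n}+\scpLtom{\varphi}{\div\Phi_n}=\int_{\Gamma}\varphi(\nu\cdot\Phi_n)\,ds$, the complementary supports again killing the boundary integral. For the strong halves of \textbf{(ii)} and \textbf{(iii)} the same sequences serve: since $\rot\na\varphi_n=0$ and $\div\rot\varphi_n=0$, the fields $\na\varphi_n$ and $\rot\varphi_n$ again lie in $\Cictom$ (their supports sitting inside those of $\varphi_n$) and converge to $\na u$, resp. $\rot E$, in the full graph norm, giving $\na u\in\rctom$ and $\rot E\in\overline{\Cictom}^{\dom}$; being curl- resp. divergence-free, the limits lie in $\rzom$ resp. $\dzom$, whence $\na u\in\rcztom$ and $\rot E\in\dcztom$.

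For the two weak inclusions $\na\cHoctom\subset\crcztom$ and $\rot\crctom\subset\cdcztom$ I would instead feed a cleverly chosen test field into the defining weak identity. Given $u\in\cHoctom$ and arbitrary $\Phi\in\Cicnom$, the field $\rot\Phi$ again belongs to $\Cicnom$ and satisfies $\div\rot\Phi=0$, so the identity defining $\cHoctom$ yields $\scpLtom{\na u}{\rot\Phi}=-\scpLtom{u}{\div\rot\Phi}=0=\scpLtom{\rot\na u}{\Phi}$, which is exactly the membership $\na u\in\crctom$; together with $\rot\na u=0$ this is $\na u\in\crcztom$. Symmetrically, for $E\in\crctom$ and a scalar $\varphi\in\Cicnom$ the field $\na\varphi\in\Cicnom$ satisfies $\rot\na\varphi=0$, so the identity defining $\crctom$ produces $\scpLtom{\rot E}{\na\varphi}=\scpLtom{E}{\rot\na\varphi}=0=-\scpLtom{\div\rot E}{\varphi}$; this is precisely the weak divergence identity of $\cdcnom$ with $\Gamma_\tau$ and $\Gamma_\nu$ interchanged, and with $\div\rot E=0$ it yields $\rot E\in\cdcztom$.

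The computations are routine; the only point demanding care is the bookkeeping of the two boundary pieces and of the supports, i.e. of the subscripts $\tau$ and $\nu$. I expect the main (mild) obstacle to be checking cleanly that each auxiliary field $\na\varphi_n$, $\rot\varphi_n$, $\rot\Phi$, $\na\varphi$ really lands in the correct test-field class -- that differentiation preserves the relevant ``support away from $\Gamma_\tau$'' or ``$\Gamma_\nu$'' condition -- and that each Green identity is oriented so that the vanishing of one factor on each part of $\Gamma$ indeed annihilates the boundary integral.
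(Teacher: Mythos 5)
Your proposal is correct, and it is exactly the elementary argument the paper has in mind (the lemma is stated without proof there): density plus continuity of the bilinear forms for the strong inclusions, closedness of supports under differentiation for $\na\varphi_n$ and $\rot\varphi_n$, and substitution of $\rot\Phi$ resp.\ $\na\varphi$ into the defining weak identities for the two weak inclusions. One small refinement for part (i): since $\Gamma$ is only weak Lipschitz, it is cleaner to avoid the literal surface integral $\int_{\Gamma}(\nu\times\varphi_n)\cdot\Phi$ and instead observe that for $\varphi_n\in\Cictom$ and $\Phi\in\Cicnom$ the field $\varphi_n\times\Phi$ restricted to $\om$ lies in $\Cic(\om)$, because its support avoids $\ol{\Gamma}_\tau\cup\ol{\Gamma}_\nu=\Gamma$; hence $\int_{\om}\div(\varphi_n\times\Phi)=0$ gives the integration-by-parts identity with no boundary term to discuss at all, and the same device handles the $\na$- and $\div$-cases.
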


Later we will show that in fact for all these spaces
the strong and weak definitions of the boundary conditions coincide, i.e.,
\begin{align}
\label{introws}
\Hoctom = \cHoctom ,\quad \rctom = \crctom,\quad \dcnom = \cdcnom,
\end{align} 
which is an important feature of these Sobolev spaces.
In case of full boundary conditions this can be seen from the following perspective:
Define the unbounded linear rotation operator
\begin{align*}
\Abb{\Rot}{\Cic(\om)\subset\Ltom}{\Ltom}{E}{\rot E}.
\end{align*}
By its closure
\begin{align*}
\Abb{\overline\Rot}{\rcom\subset\Ltom}{\Ltom}{E}{\rot E}
\end{align*}
the differential operator 'rot' is extended to elements of $ \rcom $
and its adjoint
\begin{align*}
\Abb{\Rot^*}{\rom\subset\Ltom}{\Ltom}{H}{\rot H}
\end{align*}
further generalizes the operator to the larger space $ \rom $.
We have $\Rot\subset\ol{\Rot}\subset\Rot^{*}$.
Moreover
\begin{align*}
\Abb{(\Rot^*)^*}{\crcom\subset\Ltom}{\Ltom}{E}{\rot E}
\end{align*}
and since $ \overline\Rot = (\Rot^*)^* $ we in particular have
$$\crcom = D((\Rot^*)^*) = D(\overline\Rot) = \rcom,$$
without any assumptions about the regularity (or boundedness) of $\om$ or $ \p \om$.
Analogously we have $ \Grad $, $ \overline{\Grad} $, $ \Grad^* $ and $ \overline\Grad = (\Grad^*)^* $ 
for the gradient operator '$\na $' 
as well as $ \Div $, $ \overline{\Div} $, $ \Div^* $ and $ \overline\Div = (\Div^*)^* $ 
for the divergence operator 'div'. This way we also get
$$\Hocom = \cHocom,\quad\dcom = \cdcom.$$
In case of mixed boundary conditions we may consider the operator
\begin{align*}
\Abb{\Rot}{\Cictom\subset\Ltom}{\Ltom}{E}{\rot E},
\end{align*}
its closure
\begin{align*}
\Abb{\overline{\Rot}}{\rctom\subset\Ltom}{\Ltom}{E}{\rot E},
\end{align*}
and its adjoint
\begin{align*}
\Abb{\Rot^*}{\crcnom\subset\Ltom}{\Ltom}{H}{\rot H}.
\end{align*}
Then the double adjoint is again the rotation $\rot$.
Its domain of definition $D((\Rot^{*})^{*})$ is the space of all $E\in\rom$, such that
$$\forall\,\Phi\in\crcnom\qquad\scpLtom{E}{\rot\Phi}=\scpLtom{\rot E}{\Phi},$$
for which it is not clear if it coincides with $\crctom$, 
since the test fields are a priori allowed to be taken
not only from $\rcnom$ but also from the possibly larger space $\crcnom$.
The same problem occurs for the operators '$ \nabla $' and '$\div$'. 
Therefore, the equalities \eqref{introws} are not obvious consequences
of simple functional analysis but need a detailed, technical argument.

\subsection{Lipschitz domains}

Let $\om\subset\rt$ be a bounded domain with boundary $\Gamma:=\p\!\om$.
We introduce the setting we will be working in. Define (cf. Figure \ref{fig:cube})
\begin{align*}
B&:=(-1,1)^3\subset\rt, &B_{\pm}&:=\{x\in B:\pm x_3>0\}, &
B_0&:=\{x\in B:x_3 = 0\}, \\
&&B_{0,\pm}&:=\{x\in B_0:\pm x_1 > 0 \},&
B_{0,0}&:=\{ x\in B_0 : x_1 = 0 \}.
\end{align*}

\begin{defi}
\label{defilipmani}
$ \om $ is called weak Lipschitz, if the boundary $ \Gamma $ is a Lipschitz submanifold, i.e., 
if there is a finite open covering $ U_1,\dots, U_K\subset \rt $ of $ \Gamma $ 
and vector fields $ \phi_k : U_k \rightarrow B$, such that for $ k = 1,\dots, K $ 
\begin{itemize}
\item[\bf(i)] $ \phi_k \in \Czo(U_k,B) $ is bijective
and $ \psi_k := \phi_k^{-1} \in \Czo(B,U_k) $,
\item[\bf(ii)] $ \phi_k(U_k \cap \om) = B_{-}$
\end{itemize}
hold.
\end{defi}

\begin{rem}
For $ k = 1,\dots, K $ we have
$\phi_k (U_k \setminus \ol{\om}) = B_+ $ and $\phi_k (U_k \cap \Gamma) = B_0$.
\end{rem}

\begin{defi}
\label{defilipsubmani}
Let $ \om $ be weak Lipschitz. A relatively open subset
$ \Gamma_\tau $ of $ \Gamma $ is called   weak Lipschitz, 
if $ \Gamma_\tau $ is a Lipschitz submanifold of $ \Gamma $, i.e., 
there is an open covering $ U_1,\dots, U_k \subset \rt $ of $ \Gamma $ 
and vector fields $ \phi_k := U_k \rightarrow B $, 
such that for $ k = 1,\dots, K $ and in addition to (i), (ii) 
in Definition \ref{defilipmani} one of 
\begin{itemize}
\item[\bf(iii)] 
$ U_k \cap \Gamma_\tau = \emptyset$,
\item[\bf(iii$'$)] 
$ U_k \cap \Gamma_\tau = U_k \cap \Gamma \qimpl \phi_k(U_k \cap \Gamma_\tau) = B_{0} $,
\item[\bf(iii$''$)] 
$ \emptyset \neq U_k \cap \Gamma_\tau \neq U_k \cap \Gamma \qimpl \phi_k(U_k \cap \Gamma_\tau) = B_{0,-} $
\end{itemize}
holds. We define $ \Gamma_\nu := \Gamma \setminus \ol\Gamma_\tau $ 
to be the relatively open complement of $\Gamma_\tau$.
\end{defi}

\begin{defi}
A pair $ (\om,\Gamma_\tau) $ conforming to Definitions \ref{defilipmani} and
\ref{defilipsubmani} will be called weak Lipschitz.
\end{defi}

\begin{rem}
For the cases (iii), (iii$'$) and (iii$''$)in Definition \ref{defilipsubmani} we further have
\begin{itemize}
\item[\bf(iii)] 
$ U_k \cap \Gamma_\tau = \emptyset \;\impl\; U_k \cap \Gamma_\nu = U_k \cap \Gamma \;\impl\; 
\phi_k(U_k \cap \Gamma_\nu) = B_{0}$,
\item[\bf(iii$'$)] 
$ U_k \cap \Gamma_\tau = U_k \cap \Gamma \;\impl\; U_k \cap \Gamma_\nu = \emptyset $,
\item[\bf(iii$''$)] 
$ \emptyset \neq U_k \cap \Gamma_\tau \neq U_k \cap \Gamma \;\impl \;
\emptyset \neq U_k \cap \Gamma_\nu \neq U_k \cap \Gamma \;\impl\; \phi_k(U_k \cap \Gamma_\nu) = B_{0,+} $
and $ \phi_k(U_k \cap \ol{\Gamma}_\tau \cap \ol{\Gamma}_\nu) = B_{0,0} $.
\end{itemize}
\end{rem}

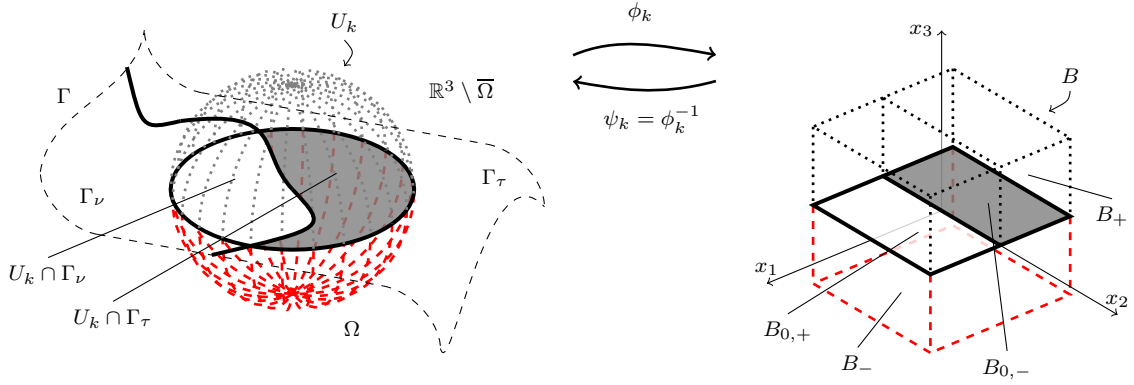
\begin{figure}
\centering
{\footnotesize		
\tdplotsetmaincoords{60}{20}
\begin{tikzpicture}[scale=0.4,tdplot_main_coords]
	\coordinate (O) at (0,0,0);		
	\def\Rad{4}				
	\foreach \angle in {0,15,...,165}{
		\tdplotsetthetaplanecoords{\angle}
		\tdplotdrawarc[red,dashed,line width=1pt,tdplot_rotated_coords]{(O)}{\Rad}{90}{270}{}{}
	}
	\fill [gray,opacity=.8] (O) circle (\Rad);
    	\fill [white] (0,0,0) -- (-1,1.3,0) -- (-2.35,3.3,0) -- (-3.3,2.3,0) -- (-3.95,0.9,0) -- (-3.95,-0.5,0) -- (-3.95,-0.9,0) -- (-3.3,-2.3,0) -- (-2.35,-3.3,0) -- (-1.1,-3.9,0) -- (0,-4,0) -- (1.3,-2.5,0) -- (1.4,-1.9,0) -- (1.1,-1.1,0) -- cycle;
	\draw [line width=1.5pt] (O) circle (\Rad);
	\foreach \angle in {-90,-75,...,75}{
		\tdplotsetthetaplanecoords{\angle}
		\tdplotdrawarc[gray,dotted,line width=1pt,tdplot_rotated_coords]{(O)}{\Rad}{-90}{90}{}{}
	}
	\draw [dashed] (-5,-5,0) -- (5,-5,0);
	\draw [dashed] (-5,-5,0) .. controls (-6,-5,0) and (-7,-5,1) .. (-7,-5,2);	
	\draw [dashed] (5,-5,0) .. controls (6,-5,0) and (7,-5,-1) .. (7,-5,-2);	
	\draw [dashed] (-5,5,0) -- (5,5,0);
	\draw [dashed] (-5,5,0) .. controls (-6,5,0) and (-7,5,1) .. (-7,5,2);	
	\draw [dashed] (5,5,0) .. controls (6,5,0) and (7,5,-1) .. (7,5,-2);		
	\draw [dashed] (-7,-5,2) .. controls (-7,-4,4) and (-7,4,0) .. (-7,5,2);	
	\draw [dashed] (7,-5,-2) .. controls (7,-4,-4) and (5,5,0) .. (7,5,-2);	
	\draw [line width=1.5pt] (-6.7,2.5,2) .. controls (-6,2,0) .. (-5,3,0);
	\draw [line width=1.5pt] (-5,3,0) .. controls (-2,5,0) and (-2,2,0) .. (0,0,0);
	\draw [line width=1.5pt] (0,0,0) .. controls (2,-2,0) .. (-1,-5,0);
	\draw (5,-8,0) 		node [] {$\Omega$};							
	\draw (3,8,0) 		node [] {$\mathbb{R}^3 \setminus \ol\Omega$};
	\draw (-8,0,2) 		node [] {$\Gamma$};							
	\draw (-6,-2.7,0) 	node [] {$\Gamma_\nu$};
	\draw (6,3,0) 		node [] {$\Gamma_\tau$};
	\draw [black,->] (0,5,3) node[anchor=south]{$U_k$} .. controls (0,6,2) .. (0,5,2);
	\draw [black] (-3,-9,0) node[anchor=north]{$U_k \cap \Gamma_\tau$} -- (1,1.5,0);
	\draw [black] (-6,-7,0) node[anchor=north]{$U_k \cap \Gamma_\nu$} -- (-2,0,0);
	\draw [black,line width=1pt,->] (8,5,4) .. controls (9.5,5,5) and (11,5,5) .. (13,5,5);
	\draw (10,6,5.5) node [] {$\phi_{k}$};
	\draw [black,line width=1pt,<-] (8,5,3) .. controls (9.5,5,3) and (11,5,3) .. (13,5,4);
	\draw (10.3,6.3,1.3) node [] {$\psi_{k} = \phi_{k}^{-1}$};
\end{tikzpicture}
\tdplotsetmaincoords{60}{140}
\begin{tikzpicture}[scale=1.2,tdplot_main_coords]
	\draw [black] (0,0,0) -- (1,0,0);
	\draw [black] (0,0,0) -- (0,1,0);
	\draw [gray] (0,0,0) -- (0,0,1);
	\draw [red,dashed,line width=1pt] (1,1,-1) -- (1,-1,-1) -- (-1,-1,-1) -- (-1,1,-1) -- cycle;	
	\draw [red,dashed,line width=1pt] (1,1,0) -- (1,1,-1);							
	\draw [red,dashed,line width=1pt] (1,-1,0) -- (1,-1,-1);							
	\draw [red,dashed,line width=1pt] (-1,-1,0) -- (-1,-1,-1);						
	\draw [red,dashed,line width=1pt] (-1,1,0) -- (-1,1,-1);							
	\fill [gray,opacity=.8] (0,1,0) -- (-1,1,0) -- (-1,-1,0) -- (0,-1,0) -- cycle;				
	\fill [white,opacity=.8] (0,1,0) -- (1,1,0) -- (1,-1,0) -- (0,-1,0) -- cycle;				
	\draw [black,line width=1.5pt] (0,-1,0) -- (0,1,0);								
	\draw [black,line width=1.5pt] (1,1,0) -- (1,-1,0) -- (-1,-1,0) -- (-1,1,0) -- cycle;		
	\draw [black,dotted,line width=1pt] (0,-1,1) -- (0,-1,0);					
	\draw [black,dotted,line width=1pt] (0,1,0) -- (0,1,1);					
	\draw [black,dotted,line width=1pt] (-1,-1,1) -- (-1,-1,0);				
	\draw [black,dotted,line width=1pt] (-1,1,1) -- (-1,1,0);					
	\draw [black,dotted,line width=1pt] (1,-1,1) -- (1,-1,0);					
	\draw [black,dotted,line width=1pt] (1,1,1) -- (1,1,0);					
	\draw [black,dotted,line width=1pt] (0,-1,1) -- (1,-1,1) -- (1,1,1) -- (0,1,1);			
	\draw [black,dotted,line width=1pt] (0,1,1) -- (-1,1,1) -- (-1,-1,1) -- (0,-1,1) -- cycle;	
	\draw [black,->] (1,0,0) -- (2.5,0,0) 	node[anchor=south]{$x_1$};
	\draw [black,->] (0,1,0) -- (0,3,0) 	node[anchor=south]{$x_2$};
	\draw [black,->] (0,0,1) -- (0,0,2.3) 	node[anchor=east]{$x_3$};
	\draw [black,->] (-1,1,1.6) 	node[anchor=south]{$B$} .. controls (-1.4,0.5,1.1) .. (-1.2,0.4,1);
	\draw [black] (2.2,0,-0.5) 	node[anchor=north]{$B_{0,+}$} -- (0.5,0.2,0);
	\draw [black] (2,3.5,0.5) 	node[anchor=north]{$B_{0,-}$} -- (-0.5,0.2,0);
	\draw [black] (1.2,0,-1.3) 	node[anchor=north]{$B_{-}$} -- (1.5,1.1,0);
	\draw [black] (-1.6,1.0,0) 	node[anchor=north]{$B_{+}$} -- (-0.9,0.4,0.3);
\end{tikzpicture}
}
\caption{Mappings $\phi_{k}$ and $\psi_{k}$ between a ball $U_k$ and the cube $B$.}
\end{figure}

In the literature a bounded domain $ \om \subset \rt $ is called (strong) Lipschitz, 
if there is an open covering $ U_1,\dots, U_K \subset \rt $ and rigid body motions 
$ R_k = A_k + a_k $, $ A_k $ orthogonal, 
$ a_k \in \rt $, $ k = 1,\dots, K $, such that with $ \xi_k \in \Czo(I^2,I) $, $ k = 1,\dots, K $, 
and $ I = (-1,1) $
$$R_k(U_k \cap \om) = \{ x \in B : x_3 < \xi_k(x')  \},\quad x' = (x_1, x_2),$$
holds. Then $R_k(U_k \cap \Gamma) = \{ x \in B:x_3 = \xi_k (x') \}$.
A relatively open subset $ \Gamma_\tau \subset \Gamma $ is called (strong) Lipschitz, 
if with $ \zeta_k \in \Czo(I,I) $
$$\emptyset\neq U_k \cap \Gamma_\tau \neq U_k \cap \Gamma \qimpl R_k(U_k \cap \Gamma_\tau) 
= \{ x\in B : x_3 = \xi_k(x'),~ x_1 < \zeta_k (x_2) \}$$
holds.With this $R_k(U_k \setminus \ol{\om}) = \{ x \in B : x_3 > \xi_k (x') \}$ and
for $\emptyset \neq U_k \cap \Gamma_\tau \neq U_k \cap \Gamma$ 
\begin{align*}
R_k(U_k \cap \Gamma_\nu) &= \{ x \in B : x_3 = \xi_k (x'),~ x_1 > \zeta_k (x_2) \}, \\
R_k(U_k \cap \ol{\Gamma}_\tau \cap \ol{\Gamma}_\nu) &= \{ x \in B : x_3 = \xi_k (x'),~ x_1 = \zeta_k (x_2) \}.
\end{align*}
It holds
\begin{itemize}
\item 
$ \om $ strong Lipschitz $ \qimpl $ $ \om $ weak Lipschitz,
\item 
$ \om $ strong Lipschitz and $ \Gamma_\tau $ strong Lipschitz $ \qimpl $ $ (\om,\Gamma_\tau) $ weak Lipschitz pair,
\end{itemize}
as by setting
\begin{align*}
\varphi_k: U_k \rightarrow B~,~\varphi_k (x) := \dvec{x_1 - \zeta(x_2)}{x_2}{x_3 - \xi(x')}
\phi_k := \varphi_k \circ R_k,\quad 
\psi_k := \phi^{-1}_k
\end{align*}
we can define Lipschitz transformations as in Definitions \ref{defilipmani} and \ref{defilipsubmani}. 

For later purposes we introduce special notations for the half-cube domain
\begin{align}
\label{halfcube}
\Xi:=B_{-},\quad \gamma:=\p\Xi
\end{align} 
and its relatively open boundary parts $\gamma_{\tau}$ and $\gamma_{\nu}:=\gamma\setminus\ol{\gamma_{\tau}}$.
We will only consider the cases 
\begin{align}
\label{halfcubegn}
\gamma_{\nu}=\emptyset,\quad\gamma_{\nu}=B_{0},\quad\gamma_{\nu}=B_{0,+}
\end{align}
and we note that $\Xi$ and $\gamma_\tau$ are strong Lipschitz.

\subsection{Outline of the proof}

Let $(\om,\Gamma_{\tau})$ be a weak Lipschitz pair for a bounded domain $\om\subset\rt$.

\begin{itemize}
\item 
As a first step, we show by elementary arguments $ \Hoctom = \cHoctom $, i.e., 
for the $ \Ho $-spaces the strong and weak definitions of the boundary conditions coincide.
\item 
In the second and essential step, we construct various $ \Ho $-potentials on simple domains,
mainly for the half-cube $ \Xi$, see \eqref{halfcube}, 
with the special boundary conditions \eqref{halfcubegn}, i.e.,
$$ \crczng = \rczng = \na\Hocgng, \quad 
\cdczng = \dczng = \rot\Hocgng, \quad 
\Lt(\Xi) = \div\Hocgng. $$
\item 
In the third step it is shown that the strong and weak definitions of the boundary conditions also coincide 
for the divergence and rotation spaces on the half-cube $ \Xi$
with the special boundary conditions \eqref{halfcubegn}, i.e.,
\begin{align}
\label{pillemann}
\rcng = \crcng, \quad\dcng = \cdcng. 
\end{align}
\item 
The fourth step proves the compact embedding on the half-cube $ \Xi$
with the special boundary conditions \eqref{halfcubegn}, i.e.,
\begin{align}
\label{MCPXi}
\rctg\cap\dcng\hookrightarrow\Lt(\Xi)
\end{align}
is compact.
\item 
In the fifth step, \eqref{pillemann} is established for weak Lipschitz domains, i.e. 
$$\rctom = \crctom,\quad\dcnom = \cdcnom.$$
\item 
In the last step, we finally prove the compact embedding \eqref{MCPXi} for weak Lipschitz pairs, i.e.,
\begin{align}
\label{MCPom}
\rctom\cap\dcnom\hookrightarrow\Ltom
\end{align}
is compact.
\end{itemize}

\section{$\mathsf H^1$-potentials}

In this section $ \Ho $-potentials for irrotational or solenoidal $\Lt$-vector fields 
or $ \Lt $-functions are obtained. 
For illustrative purposes we will first give the proofs 
for the half-cube $\Xi$ with the special boundary conditions \eqref{halfcubegn}
which will also later be used as the image of the coordinate transformation 
that flattens out the boundary of a weak Lipschitz pair, 
and then show how to adjust them for more general domains. 

We start out with a density result for $\Ho$-functions, i.e.,
the strong and weak definitions of the boundary conditions coincide for $\Ho$-functions,
which is first proved for a flat boundary and then generalized 
to weak Lipschitz pairs. The proof can be found in \cite[Lemma 2, Lemma 3]{jochmanncompembmaxmixbc}. 
For the convenience of the reader we present a simplified proof, 
using our notation, in the appendix.

\begin{lem}
\label{Hoct}
Let $\om\subset\rt$ be a bounded domain and $(\om, \Gamma_{\tau})$ be a weak Lipschitz pair as well as
$$\Hocttr(\om):=\big\{u\in\Hoom:u|_{\Gamma_{\tau}}=0\big\}.$$
Then $\cHoctom=\Hocttr(\om)=\Hoctom$.
\end{lem}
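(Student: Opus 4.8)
The plan is to prove the chain
$$\Hoctom \subset \cHoctom = \Hocttr(\om) \subset \Hoctom,$$
so that all three spaces coincide. The first inclusion is Lemma~\ref{inklusionen}(i) (it follows by integrating by parts against $\Cicnom$-fields and passing to the $\hoom$-closure). For the middle equality I would use the Green identity $\scpLtom{u}{\div\Phi}+\scpLtom{\na u}{\Phi}=\langle u,\nu\cdot\Phi\rangle_{\Lt(\Gamma)}$, valid for $u\in\Hoom$ and Lipschitz vector fields $\Phi$ by the trace and Gauss theorems on the weak Lipschitz domain $\om$. If $u\in\Hocttr(\om)$, then $u=0$ on $\Gamma_\tau$ while every $\Phi\in\Cicnom$ vanishes near $\Gamma_\nu$, so the boundary term drops out and $u\in\cHoctom$. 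Conversely, for $u\in\cHoctom$ the boundary term $\langle u,\nu\cdot\Phi\rangle_{\Lt(\Gamma)}$ vanishes for all $\Phi\in\Cicnom$, and since such $\Phi$ realize a dense set of normal components on $\Gamma_\tau$ (while vanishing near $\Gamma_\nu$) this forces $u|_{\Gamma_\tau}=0$. Hence $\cHoctom=\Hocttr(\om)$, and the only genuine content left is the density inclusion $\Hocttr(\om)\subset\Hoctom$.

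Following the outline, I would first settle the flat model. On the half-cube $\Xi=B_-$ with $\gamma_\nu$ one of the three configurations in \eqref{halfcubegn}, take $u\in\mathsf{H}^1(\Xi)$ with $u|_{\gamma_\tau}=0$. Because $\gamma_\tau$ consists of flat faces of the cube and $\gamma_\nu\subseteq B_0$, one can translate $u$ a small distance in a direction pointing off $\gamma_\tau$ into $\Xi$ (using the vanishing trace to extend by zero across the $\tau$-faces) and, where $\gamma_\nu$ is present, reflect evenly across the free face $B_0$; mollifying the result yields fields in $\Czoct(\Xi)$ whose supports have positive distance to $\gamma_\tau$. Continuity of translation in $\Lt$ gives $\mathsf{H}^1(\Xi)$-convergence as the shift tends to zero, so in the flat case $u$ is an $\mathsf{H}^1(\Xi)$-limit of $\Czoct(\Xi)$-fields.

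To pass to a general weak Lipschitz pair $(\om,\Gamma_\tau)$ I would localize. Choose an interior set $U_0\Subset\om$ together with the boundary charts $U_1,\dots,U_K$ from Definitions~\ref{defilipmani}--\ref{defilipsubmani} and a subordinate Lipschitz partition of unity $\{\chi_k\}_{k=0}^K$, and write $u=\sum_k\chi_k u$. The interior term $\chi_0 u$ has compact support in $\om$ and is handled by plain mollification. For each boundary term, the weak Lipschitz condition guarantees that $\phi_k$ maps the local boundary configuration onto exactly one of the flat cases \eqref{halfcubegn}; pushing $\chi_k u$ forward by the bi-Lipschitz map $\phi_k$ gives an $\mathsf{H}^1(\Xi)$-field with vanishing trace on $\gamma_\tau$ (bi-Lipschitz maps preserve membership in $\mathsf{H}^1$ with equivalent norms and transport both the trace condition and the support condition relative to $\Gamma_\tau$; note that the compact support of $\chi_k$ in $U_k$ already makes the pushed field vanish near the outer cube faces). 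Applying the flat-case approximation and pulling the approximants back by $\psi_k=\phi_k^{-1}$ produces fields in $\Cictom$ (or $\Czoct(\om)$, cf. Remark~\ref{remlipclosure}(i)); summing over $k$ reconstructs an $\hoom$-approximation of $u$ by elements of $\Hoctom$.

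The main obstacle is the flat case together with its compatibility under the charts. Concretely, the mixed configuration $\gamma_\nu=B_{0,+}$ is delicate: one must choose the shift direction along the interface so that the support is pushed off the $\tau$-part $B_{0,-}$ while the even reflection is applied only across the free part $B_{0,+}$, and verify that the combined translate--reflect--mollify construction genuinely yields $\Czoct(\Xi)$-approximants converging in $\mathsf{H}^1(\Xi)$. Moreover one must check that the homogeneous trace on $\Gamma_\tau$ is preserved under the merely Lipschitz charts $\phi_k$, which is exactly where the weak Lipschitz structure of both $\Gamma$ and $\Gamma_\tau$ is used. As this is a known result, I would also compare with \cite[Lemma~2, Lemma~3]{jochmanncompembmaxmixbc}.
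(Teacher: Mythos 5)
Your proposal follows essentially the same route as the paper: the chain $\Hoctom\subset\cHoctom\subset\Hocttr(\om)\subset\Hoctom$, localization by a partition of unity subordinate to the Lipschitz charts, flattening to the half-cube, and a translate--reflect--mollify argument for the mixed flat configuration. Two points are worth flagging. First, for the inclusion $\cHoctom\subset\Hocttr(\om)$ you invoke a Green identity together with the claim that the normal components $\nu\cdot\Phi|_{\Gamma_\tau}$ of fields $\Phi\in\Cicnom$ are dense; on a merely weak Lipschitz boundary this presupposes a surface measure, a trace operator and a Gauss theorem, all of which must themselves be built through the charts. The paper avoids this entirely by extending $u$ by zero across $\Gamma_\tau$ into an enlarged domain $\hat\om$, verifying by testing against $\Cic(\hat\om)$ that the extension lies in $\Ho(\hat\om)$, and reading off the vanishing trace --- a more elementary argument you may want to adopt. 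Second, the genuine content of the lemma is the flat mixed case $\gamma_\tau=B_{0,-}$, which you correctly single out as the main obstacle but leave as a sketch; this is precisely the paper's Lemma \ref{lemmahalfspace}, and the order of operations there matters. After extending by zero to $\rt_-$ one first translates \emph{tangentially} (in $+x_1$) so that the trace vanishes on a whole strip $\{x_1\le t\}$ of $\rt_0$, then reflects evenly across \emph{all} of $\rt_0$ (not only across the free part $B_{0,+}$), cuts off the reflected branch near $\{x_1\le t/2\}$, translates in $-x_3$ to create room around $\rt_{0,-}$, and only then mollifies. A reflection restricted to the free face alone leaves you with a function on an L-shaped region whose mollification near the re-entrant edge $B_{0,0}$ is not obviously supported away from $B_{0,-}$; it is the tangential pre-translation that reconciles the reflection with the Dirichlet condition on $B_{0,-}$. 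With that lemma supplied, the remainder of your argument (transport by the bi-Lipschitz charts as in Lemma \ref{lemtrafo}, cut-off, and Remark \ref{remlipclosure} (i)) coincides with the paper's proof.
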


\subsection{$\mathsf H^1$-potentials without boundary conditions}

The next three lemmas ensure the existence of $ \Ho $-potentials without boundary conditions. 
Suppose $\om\subset\rt$ to be a bounded domain.

\begin{lem}
\label{RzgradHo}
Let $\om$ be strong Lipschitz and simply connected.
Then there exists a continuous linear operator
$$\T_\na:\rzom\rightarrow\Ho(\rt),$$ 
such that for all $E\in\rzom$ 
$$\na(\T_\na E)=E\quad\text{in }\om.$$
Especially $\rzom=\na\Hoom=\na\Hoperpom$
and the potential depends continuously on the data.
In particular these are closed subspaces of $\Ltom$.
\end{lem}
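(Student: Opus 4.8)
The plan is to build the potential in three stages: first I establish the set equality $\rzom=\na\Hoom=\na\Hoperpom$, then I promote the (suitably normalized) potential map to a bounded operator and read off closedness, and finally I extend the potential from $\om$ to all of $\rt$. The inclusion $\na\Hoom\subseteq\rzom$ is immediate, since $\rot\na u=0$ and $\na u\in\Ltom$ for $u\in\Hoom$; and because $\om$ is connected we may split $\Hoom=\rz\oplus\Hoperpom$ into constants and mean-zero functions, so, as $\na$ annihilates constants, $\na\Hoom=\na\Hoperpom$. The substantial inclusion is $\rzom\subseteq\na\Hoperpom$: given $E\in\rzom$, i.e.\ $E\in\Ltom$ with $\rot E=0$ distributionally, I want a scalar potential $u$ with $\na u=E$. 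Here the hypothesis that $\om$ is simply connected enters decisively, as it is precisely the condition guaranteeing that the closed $\Lt$-field $E$ is exact, so that it admits a single-valued potential $u\in\Hoom$; after subtracting its mean I obtain $u\in\Hoperpom$ with $\na u=E$.

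I expect this $\Lt$-Poincar\'e lemma on a simply connected bounded strong Lipschitz domain to be the \emph{main obstacle}. One concrete route is to mollify $E$ on interior subdomains $\om_\varepsilon$, where $\rot(E*\rho_\varepsilon)=(\rot E)*\rho_\varepsilon=0$, integrate the resulting smooth curl-free fields along paths (well defined thanks to simple connectedness), normalize their means, and control the limit by the uniform $\Lt$-bound $\|\na u_\varepsilon\|_{\Ltom}=\|E\|_{\Lt(\om_\varepsilon)}\le\|E\|_{\Ltom}$ together with the Poincar\'e inequality; a cleaner but less self-contained alternative is to invoke the de Rham isomorphism $H^1_{\mathrm{dR}}(\om)\cong H^1(\om;\rz)=0$, valid for Lipschitz domains. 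Everything else in the lemma is soft functional analysis.

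Indeed, once the set equality is in hand, the gradient $\na\colon\Hoperpom\to\Ltom$ is injective, because on the connected domain $\om$ the relation $\na u=0$ forces $u$ constant, hence $u=0$ in $\Hoperpom$, and by the previous step its range is exactly $\rzom$. Since $\om$ is bounded and strong Lipschitz, the Poincar\'e--Wirtinger inequality $\|u\|_{\Ltom}\le c\,\|\na u\|_{\Ltom}$ holds on $\Hoperpom$, so $\na$ is bounded below and $\|u\|_{\Hoom}\le C\,\|\na u\|_{\Ltom}=C\,\|E\|_{\Ltom}$. Thus $(\na|_{\Hoperpom})^{-1}\colon\rzom\to\Hoperpom$, $E\mapsto u$, is a continuous linear operator, and, $\na$ being bounded below on the complete space $\Hoperpom$, its range $\rzom=\na\Hoperpom=\na\Hoom$ is closed in $\Ltom$. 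As the $\rom$-graph norm reduces to the $\Ltom$-norm on $\rzom$ (where $\rot E=0$), this is exactly the asserted continuous dependence of the potential on the data. Finally, to land in $\Ho(\rt)$ rather than merely $\Hoom$, I compose with a continuous linear Sobolev extension operator $\mathfrak{E}\colon\Hoom\to\Ho(\rt)$, which exists because $\om$ is bounded strong Lipschitz (Calder\'on--Stein), and set $\T_\na:=\mathfrak{E}\circ(\na|_{\Hoperpom})^{-1}$. Then $\T_\na\colon\rzom\to\Ho(\rt)$ is continuous and linear with $\na(\T_\na E)=\na u=E$ in $\om$ for every $E\in\rzom$, which is the claim.
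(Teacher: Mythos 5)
Your proposal is correct and follows essentially the same route as the paper, which simply cites the classical Helmholtz/Poincar\'e-lemma fact $\rzom=\na\Hoperpom$, invokes the Poincar\'e--Wirtinger inequality for continuous dependence, and applies Calder\'on's extension theorem --- exactly your three stages, with your text merely filling in more detail on the classical step. (One small caveat in your optional mollification sketch: the interior sets $\om_\varepsilon$ of a simply connected domain need not themselves be simply connected, so path integration there requires extra care; your de Rham fallback avoids this.)
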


\begin{proof}
It is classical (standard Helmholtz decomposition) that $\rzom=\na\Hoperpom$ holds.
Using Poincar\'e's inequality the potential depends continuously on the data.
By Calderon's extension theorem we can extend any potential in $\Hoperpom$ continuously to $\Ho(\rt)$.
\end{proof}

\begin{lem}
\label{div0rot}
Let $\om$ be strong Lipschitz, such that $\rt\setminus\ol\om$ is connected (i.e. $ \Gamma $ is connected). 
Then there exists a continuous linear operator\footnote{Let $X,Y,Z$ be normed spaces. 
We call an operator $T:X\to Y\cap Z$ continuous, if $T_{Y}:X\to Y$ and $T_{Z}:X\to Z$ are continuous.} 
$$\T_{\mathsf{r}}:\Ltom\rightarrow\Ho(\rt)\cap\dz(\rt),$$ 
such that for all $H\in\dzom$ 
$$\rot(\T_{\mathsf{r}} H)=H\quad\text{in }\om.$$
Especially $\dzom=\rot\Hoom=\rot\big(\Hoom\cap\dzom\big)$
and the potential depends continuously on the data.
In particular these are closed subspaces of $\Ltom$.
\end{lem}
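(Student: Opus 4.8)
The plan is to build the potential globally on $\rt$ by a Biot--Savart (Newton potential) construction, after extending $H$ to a compactly supported solenoidal $\Lt(\rt)$-field; the only place where the topology of $\om$ enters is in this extension, and this is exactly where the hypothesis that $\Gamma$ is connected (equivalently $\rt\setminus\ol\om$ connected) is needed. Throughout I work with $H\in\dzom$; since $\dzom$ is closed in $\Ltom$ (kernel of the closed operator $\div$), the operator constructed on $\dzom$ extends to all of $\Ltom$ by precomposition with the orthogonal projection onto $\dzom$, which changes nothing on $\dzom$, where alone the potential property is claimed.

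First I would fix an open ball $B$ with $\ol\om\Subset B$ and set $\Omega':=B\setminus\ol\om$. Because $\om$ is Lipschitz, every connected component of $\Omega'$ has boundary meeting $\Gamma$, and since $\Gamma$ is connected the local product structure of the boundary forces all of $\Gamma$ to lie in the closure of a single component; hence $\Omega'$ is connected. For $H\in\dzom$ the normal trace $\nu\cdot H\in H^{-1/2}(\Gamma)$ is well defined and satisfies the compatibility $\langle\nu\cdot H,1\rangle_\Gamma=\int_\om\div H=0$. I then solve the Neumann problem on $\Omega'$: find $\theta\in\Ho(\Omega')$ (unique up to a constant) with
\[
\int_{\Omega'}\na\theta\cdot\na v=\langle\nu\cdot H,v|_\Gamma\rangle_\Gamma\quad\text{for all }v\in\Ho(\Omega'),
\]
solvable by Lax--Milgram on $\Ho(\Omega')\cap\rz^{\bot}$ (Poincaré estimate, the right hand side annihilating constants by the compatibility condition), so that $\Delta\theta=0$ in $\Omega'$, $\p_\nu\theta=\nu\cdot H$ on $\Gamma$ and $\p_\nu\theta=0$ on $\p B$, with $\|\na\theta\|_{\Lt(\Omega')}\le c\,\|\nu\cdot H\|_{H^{-1/2}(\Gamma)}\le c\,\|H\|_{\Ltom}$. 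Gluing, I define $\tilde H:=H$ in $\om$, $\tilde H:=\na\theta$ in $\Omega'$ and $\tilde H:=0$ in $\rt\setminus B$. The matching of the normal traces across $\Gamma$ and the vanishing normal trace on $\p B$ give $\div\tilde H=0$ in $\rt$ distributionally, while $\tilde H\in\Lt(\rt)$ has support in $\ol B$ and $\|\tilde H\|_{\Lt(\rt)}\le c\,\|H\|_{\Ltom}$; the map $H\mapsto\tilde H$ is linear.

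It then remains to apply the classical vector potential on $\rt$: with the Newton kernel $N:=\tfrac{1}{4\pi|\,\cdot\,|}$ put $w:=N*\tilde H$ componentwise. By Calderón--Zygmund $\na^2 w\in\Lt(\rt)$ with $\|\na^2 w\|_{\Lt(\rt)}\le c\,\|\tilde H\|_{\Lt(\rt)}$, and $\div w=N*\div\tilde H=0$. Setting $\T_{\mathsf r}H:=A:=\rot w$ we obtain $\rot A=\na\div w-\Delta w=\tilde H$, hence $\div A=0$, and the compact support of $\tilde H$ yields the decay making $A\in\Lt(\rt)$; thus $A\in\Ho(\rt)\cap\dz(\rt)$ and $A$ depends continuously and linearly on $H$. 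Restricting to $\om$ gives $\rot(A|_\om)=\tilde H|_\om=H$ with $A|_\om\in\Hoom\cap\dzom$, whence $\dzom\subseteq\rot(\Hoom\cap\dzom)\subseteq\rot\Hoom\subseteq\dzom$ (the last inclusion since $\div\rot=0$), so all three spaces coincide and are closed. I expect the main obstacle to be the solenoidal extension step: it must be carried out with control of the $\Lt$-norm and, crucially, its solvability rests on the single flux condition $\int_\om\div H=0$ being sufficient, which is precisely what the connectedness of $\Gamma$ guarantees; the Biot--Savart step afterwards is entirely standard.
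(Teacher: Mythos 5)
Your proposal is correct and follows essentially the same route as the paper: extend $H$ to a compactly supported solenoidal field on $\rt$ by inserting the gradient of a harmonic Neumann potential in the shell between $\Gamma$ and a large sphere (whose connectedness is exactly where the hypothesis on $\Gamma$ enters), then apply the whole-space vector potential, your Newton-kernel construction $\rot(N*\tilde H)$ being precisely the paper's Fourier multiplier $i\,\xi\times|\xi|^{-2}\,\cdot$ in disguise. The only notable difference is that the paper poses the Neumann problem variationally, testing against $\na(\E\psi)$ over $\om$ via an extension operator, which avoids $H^{-1/2}(\Gamma)$ normal-trace theory and defines $\T_{\mathsf{r}}$ directly on all of $\Ltom$ without your precomposition with the projection onto $\dzom$.
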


A proof can be found in \cite[Lemma 1]{jochmanncompembmaxmixbc}. 
For the convenience of the reader we repeat the proof using our notation.

\begin{proof}
Let $\ol\om$ be a subset of $B_{\rho}(0)$, the ball with radius $\rho>0$ centered at the origin.
Define $\Theta:=B_\rho(0)\setminus\ol\om$ and 
let $\E:\Ho(\Theta)\rightarrow\Ho(B_\rho(0))$ be a continuous linear, e.g. Calderon's, extension operator.
Because $\Theta$ is connected, $\norms{\na\,\cdot\,}_{\Lt(\Theta)}$ defines a norm on $\Hoperp(\Theta)$,
which is equivalent to the $\Ho(\Theta)$-norm by Poincar\'e's inequality.
Now define for $H\in\Ltom$ the operator $\A:\Ltom\rightarrow\Hoperp(\Theta)$ by
\begin{align*}
\forall\,\psi\in \Hoperp(\Theta)\qquad\scp{\na(\A H)}{\na\psi}_{\Lt(\Theta)}=\scpLtom{H}{\na(\E\psi)},
\end{align*}
which by the Lax-Milgram lemma is well defined, linear and continuous.
Next define for $H\in\Ltom$ the linear and continuous operator $\B:\Ltom\rightarrow\Lt(\rt)\cap\Lo(\rt)$
by
\begin{align*}
\B H:=
\begin{cases}
H & \text{in }\om,\\
-\na(\A H) & \text{in }\Theta,\\
0 & \text{in }\rt\setminus \ol{B_\rho(0)}.
\end{cases}
\end{align*}
For the Fourier transform $ \mathcal{F} $
$$\mathcal{F}(\B H)\in\Li(\rt)\cap\Lt(\rt)$$
holds and thus $ \hat H \in\Lt(\rt) $ with
$$\hat H := \frac{\xi}{|\xi|^2}\times\mathcal{F}(\B H),\quad\xi(x):=\id(x):=x.$$
Then
\begin{align*}
\T_{\mathsf{r}} H:=i\mathcal{F}^{-1} \hat H \in\Ho(\rt),
\end{align*}
and since $\xi\cdot\hat H = 0$,  
$$\div \T_{\mathsf{r}} H = -\mathcal{F}^{-1}(\xi\cdot \hat H) = 0$$
follows, i.e. $\T_{\mathsf{r}} H \in\Ho(\rt)\cap\dz(\rt)$,
and $\T_{\mathsf{r}} : \Ltom \to \Ho(\rt)\cap\dz(\rt)$ is a linear and continuous operator.
Now, suppose $\div H=0$ in $\om$. Let $\psi\in\Cicrt$, $\alpha:=|\Theta|^{-1}\int_\Theta \psi$ 
and $\tilde\psi:=\psi|_\Theta-\alpha\in \Hoperp(\Theta)$, 
where $|\Theta|$ denotes the Lebesgue measure of $\Theta$. Then
\begin{align*}
\scp{\B H}{\na\psi}_{\Lt(\rt)} = \scpLtom{H}{\na\psi} - \scp{\na(\A H)}{\na\tilde\psi}_{\Lt(\Theta)}
=\scpLtom{H}{\na(\psi - \E\tilde\psi)} = \scpLtom{H}{\na\varphi},
\end{align*}
where $\varphi:=\psi-\E\tilde\psi-\alpha\in\Hoom$. 
Since $\varphi$ vanishes on $\Theta$, we have $\varphi\in\Hocom$. $\div H=0$ in $\om$ yields 
$$\forall\,\psi\in\Cicrt\qquad\scp{\B H}{\na\psi}=0,$$
i.e., $\div (\B H)=0$ on $\rt$. Thus $\xi\cdot\mathcal{F}(\B H)= 0$.
But then $ \rot(\T_{\mathsf{r}} H) = \B H $ in $ \rt $ as
$$\rot(\T_{\mathsf{r}} H) =-\mathcal{F}^{-1}(\xi\times\hat H)=\mathcal{F}^{-1}\mathcal{F}\B H$$
and in particular $ \rot(\T_{\mathsf{r}} H) = H $ in $ \om $.
\end{proof}

Using the same method, a divergence potential for an $ \Lt $ function can be obtained.

\begin{lem}
\label{lemL2divFT}
There exists a continuous linear operator 
$$\T_{\mathsf{d}}:\Ltom\rightarrow\Ho(\rt)\cap\rotspace_0(\rt),$$ 
such that for all $h\in\Ltom$ 
$$\div(\T_{\mathsf{d}} h)=h\quad\text{in }\om.$$
Especially $\Ltom=\div\Hoom=\div\big(\Hoom\cap\rzom\big)$
and the potential depends continuously on the data.
In particular these are closed subspaces of $\Ltom$.
\end{lem}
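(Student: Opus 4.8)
The plan is to mirror the Fourier-analytic construction in the proof of Lemma~\ref{div0rot}, but solving the scalar equation $\div u=h$ in place of $\rot u=H$, as the paper already suggests. A pleasant feature is that this case is in fact \emph{simpler} than the rotation case: there one needed a solenoidal extension so that $\xi\cdot\mathcal F(\B H)=0$, whereas the analogous compatibility condition is here vacuous and I may extend $h$ by zero. Concretely, for $h\in\Ltom$ I set $\B h:=h$ on $\om$ and $\B h:=0$ on $\rt\setminus\ol\om$; since $\om$ is bounded this defines a continuous linear operator $\B:\Ltom\to\Lt(\rt)\cap\Lo(\rt)$, so that $\mathcal F(\B h)\in\Li(\rt)\cap\Lt(\rt)$.

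Next I would prescribe the potential through its Fourier transform. Since $\widehat{\div u}=i\,\xi\cdot\widehat u$, requiring $\widehat u$ to be parallel to $\xi$ (so that $u$ is curl-free) and solving $i\,\xi\cdot\widehat u=\mathcal F(\B h)$ leads to
\begin{align*}
\widehat{\T_{\mathsf{d}}h}:=-i\,\frac{\xi}{|\xi|^2}\,\mathcal F(\B h),\qquad
\T_{\mathsf{d}}h:=\mathcal F^{-1}\Big(-i\,\frac{\xi}{|\xi|^2}\,\mathcal F(\B h)\Big),\quad\xi(x):=x.
\end{align*}
Then $\xi\times\widehat{\T_{\mathsf{d}}h}=-i\,|\xi|^{-2}(\xi\times\xi)\,\mathcal F(\B h)=0$, so $\rot(\T_{\mathsf{d}}h)=0$, i.e. $\T_{\mathsf{d}}h\in\rotspace_0(\rt)$; while $i\,\xi\cdot\widehat{\T_{\mathsf{d}}h}=\mathcal F(\B h)$ gives $\div(\T_{\mathsf{d}}h)=\B h$, which equals $h$ on $\om$. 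As in Lemma~\ref{div0rot}, the Hermitian symmetry of the symbol guarantees that $\T_{\mathsf{d}}h$ is real valued.

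The only point requiring care---and hence the main (though modest) obstacle---is the membership $\T_{\mathsf{d}}h\in\Ho(\rt)$ together with the continuous dependence on $h$, which amounts to controlling the multiplier near $\xi=0$. Splitting frequency space into $\{|\xi|<1\}$ and $\{|\xi|\ge1\}$, the gradient part is harmless because $\big||\xi|\,\widehat{\T_{\mathsf{d}}h}\big|=|\mathcal F(\B h)|$ pointwise, whence $\big\||\xi|\,\widehat{\T_{\mathsf{d}}h}\big\|_{\Lt(\rt)}=\|\mathcal F(\B h)\|_{\Lt(\rt)}=\|h\|_{\Ltom}$ by Plancherel. For $\widehat{\T_{\mathsf{d}}h}$ itself the bound on $\{|\xi|\ge1\}$ is immediate, and on the unit ball I would estimate
\begin{align*}
\int_{|\xi|<1}\frac{|\mathcal F(\B h)|^2}{|\xi|^2}\,d\xi
&\le\|\mathcal F(\B h)\|_{\Li(\rt)}^2\int_{|\xi|<1}\frac{d\xi}{|\xi|^2}\\
&\le C\,\|\B h\|_{\Lo(\rt)}^2\le C'\,\|h\|_{\Ltom}^2,
\end{align*}
using that $|\xi|^{-2}$ is locally integrable in $\rz^3$ and $\|\mathcal F(\B h)\|_{\Li}\le\|\B h\|_{\Lo}=\|h\|_{\Lo(\om)}\le C\,\|h\|_{\Ltom}$ (Cauchy--Schwarz, $\om$ bounded). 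Thus $\T_{\mathsf{d}}:\Ltom\to\Ho(\rt)\cap\rotspace_0(\rt)$ is linear and continuous.

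Finally, composing $\T_{\mathsf{d}}$ with the continuous restriction $u\mapsto u|_\om$ yields a continuous right inverse $\Ltom\to\Hoom\cap\rzom$ of the divergence. Hence $\Ltom=\div(\Hoom\cap\rzom)=\div\Hoom$, and since these sets exhaust $\Ltom$ they are in particular closed subspaces, as claimed.
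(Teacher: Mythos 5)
Your proposal is correct and follows essentially the same route as the paper: extend $h$ by zero, apply the Fourier multiplier $-i\,\xi/|\xi|^2$, and read off $\rot=0$, $\div=h$ on $\om$ and the $\Ho(\rt)$-bound from the symbol. The only difference is that you spell out the low-frequency estimate $\int_{|\xi|<1}|\xi|^{-2}|\mathcal F(\B h)|^2\le C\|h\|_{\Ltom}^2$ via $\|\mathcal F(\B h)\|_{\Li}\le\|\B h\|_{\Lo}$, which the paper leaves implicit; this is a welcome clarification, not a deviation.
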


\begin{proof}
Let $\tilde h$ be the extension of $ h $ into $ \rt $ by zero. For the Fourier transform $ \mathcal{F} $
$$\mathcal{F}(\tilde h)\in\Li(\rt)\cap\Lt(\rt)$$
holds and thus $ \hat H \in \Lt(\rt) $ with 
$$\hat H := \frac{\xi}{|\xi|^2}\mathcal{F}(\tilde h).$$
Then  
\begin{align*}
\T_{\mathsf{d}} h:=-i\mathcal{F}^{-1}\hat H \in\Ho(\rt),
\end{align*}
and since $\xi\times\hat H = 0$,  
$$\rot \T_{\mathsf{d}} h = \mathcal{F}^{-1}(\xi\times \hat H) = 0$$
follows, i.e. $\T_{\mathsf{d}} h \in\Ho(\rt)\cap\rotspace_0(\rt)$,
and $\T_{\mathsf{d}}$ is a continuous linear operator from $\Ltom$ to $\Ho(\rt)\cap\rotspace_0(\rt)$. 
Finally, $ \div \T_{\mathsf{d}} h = \tilde h $ in $ \rt $ as
$$\div(\T_{\mathsf{d}} h) =\mathcal{F}^{-1}(\xi\cdot\hat H)=\mathcal{F}^{-1}\mathcal{F}\B\, \tilde h$$
and in particular $ \div \T_{\mathsf{d}} h = h $ in $ \om $.
\end{proof}

\begin{rem}
Let $\Theta\subset\rt$ be a domain with $\ol\om\subset\Theta$.
Using a cutting technique we can choose continuous linear potential operators
$\T_\na:\rzom\rightarrow\Hoc(\Theta)$, 
$\T_{\mathsf{r}}:\Ltom\rightarrow\Hoc(\Theta)$
and 
$\T_{\mathsf{d}}:\Ltom\rightarrow\Hoc(\Theta)$ 
with $\na(\T_\na E)=E$, 
$\rot(\T_{\mathsf{r}} H)=H$ 
and 
$\div(\T_{\mathsf{d}} h)=h$
for $E\in\rzom$,
$H\in\dzom$
and
$h\in\Ltom$, respectively.
\end{rem}

\subsection{$\mathsf H^1$-potentials with boundary conditions}
\label{secpotwithbc}

Now we start constructing $ \Ho $-potentials with boundary conditions. 
Let us recall our special setting on the half-cube
$$\Xi=B_{-}\quad\text{and}\quad
\gamma_{\nu}=\emptyset,\quad\gamma_{\nu}=B_{0}\quad\text{or}\quad\gamma_{\nu}=B_{0,+}.$$
Furthermore, cf. Figure \ref{fig:cube}, we extend $ \Xi $ over $ \gamma_\nu $ by
\begin{align*}
\tilde \Xi&\,=
\textrm{int}(\ol\Xi\cup\ol{\hat \Xi}),&
\hat \Xi&:=
\begin{cases}
\{x\in B:x_3>0\}=B_{+}
&\text{, if }\gamma_{\nu}=B_{0},\\
\{x\in B:x_3>0,\,x_1>0\}
=\{x\in B_{+}:x_1>0\}=:B_{+,+}
&\text{, if }\gamma_{\nu}=B_{0,+}.
\end{cases}
\end{align*}

\begin{figure}
\centering
{\footnotesize		
\tdplotsetmaincoords{60}{140}
\begin{tikzpicture}[scale=1.6,tdplot_main_coords]
	\draw [black] (0,0,0) -- (1,0,0);
	\draw [black] (0,0,0) -- (0,1,0);
	\draw [gray] (0,0,0) -- (0,0,1.4);
	\draw [red,dashed,line width=1pt] (1,1,-1) -- (1,-1,-1) -- (-1,-1,-1) -- (-1,1,-1) -- cycle;	
	\draw [red,dashed,line width=1pt] (1,1,0) -- (1,1,-1);							
	\draw [red,dashed,line width=1pt] (1,-1,0) -- (1,-1,-1);							
	\draw [red,dashed,line width=1pt] (-1,-1,0) -- (-1,-1,-1);						
	\draw [red,dashed,line width=1pt] (-1,1,0) -- (-1,1,-1);							
	\fill [white,opacity=.7] (1,1,0) -- (1,-1,0) -- (-1,-1,0) -- (-1,1,0) -- cycle;				
	\draw [black,line width=1.5pt] (1,1,0) -- (1,-1,0) -- (-1,-1,0) -- (-1,1,0) -- cycle;		
	\draw [black,dotted,line width=1pt] (1,-1,1) -- (1,-1,0);							
	\draw [black,dotted,line width=1pt] (1,1,1) -- (1,1,0);							
	\draw [black,dotted,line width=1pt] (-1,-1,1) -- (-1,-1,0);						
	\draw [black,dotted,line width=1pt] (-1,1,1) -- (-1,1,0);							
	\fill [white,opacity=.5] (-1,-1,1) -- (1,-1,1) -- (1,1,1) -- (-1,1,1) -- cycle;				
	\draw [black,dotted,line width=1pt] (-1,-1,1) -- (1,-1,1) -- (1,1,1) -- (-1,1,1) -- cycle;	
	\draw [black,->] (1,0,0) -- (2.5,0,0) 	node[anchor=south]{$x_1$};
	\draw [black,->] (0,1,0) -- (0,3,0) 	node[anchor=south]{$x_2$};
	\draw [black,->] (0,0,1) -- (0,0,2.3) 	node[anchor=east]{$x_3$};
  \draw [color=black,->] (3,1,0) -- (1.5,1,0);
  \draw [color=black,->] (3.1,1,2) -- (1.5,1,1);
  \draw [color=black,->] (0.7,2,3) -- (1.2,1,2);
	\draw (0.7,2.4,3.2) node [] {\Large$\hat{\Xi}$};
  \draw (3.4,1.1,0.2) node [] {\Large$\Xi$};
  \draw (3.4,1,2.1) node [] {\Large$B_{0}$};
\end{tikzpicture}
\hspace*{5mm}
\tdplotsetmaincoords{60}{140}
\begin{tikzpicture}[scale=1.6,tdplot_main_coords]
	\draw [black] (0,0,0) -- (1,0,0);
	\draw [black] (0,0,0) -- (0,1,0);
	\draw [black] (0,0,0) -- (0,0,1);
	\draw [red,dashed,line width=1pt] (1,1,-1) -- (1,-1,-1) -- (-1,-1,-1) -- (-1,1,-1) -- cycle;	
	\draw [red,dashed,line width=1pt] (1,1,0) -- (1,1,-1);							
	\draw [red,dashed,line width=1pt] (1,-1,0) -- (1,-1,-1);							
	\draw [red,dashed,line width=1pt] (-1,-1,0) -- (-1,-1,-1);						
	\draw [red,dashed,line width=1pt] (-1,1,0) -- (-1,1,-1);							
	\fill [white,opacity=.8] (-1,-1,0) -- (1,-1,0) -- (1,1,0) -- (-1,1,0) -- cycle;				
	\draw [red,dashed,line width=1pt] (0,-1,0) -- (0,1,0) -- (-1,1,0) -- (-1,-1,0) -- cycle; 	
	\draw [black,line width=1.5pt] (1,1,0) -- (1,-1,0) -- (0,-1,0) -- (0,1,0) -- cycle;		
	\draw [black,dotted,line width=1pt] (0,-1,1) -- (0,-1,0);							
	\draw [black,dotted,line width=1pt] (1,-1,1) -- (1,-1,0);							
	\draw [black,dotted,line width=1pt] (0,1,0) -- (0,1,1);							
	\draw [black,dotted,line width=1pt] (1,1,1) -- (1,1,0);							
	\fill [white,opacity=.8] (0,-1,1) -- (1,-1,1) -- (1,1,1) -- (0,1,1) -- cycle;				
	\draw [black,dotted,line width=1pt] (0,-1,1) -- (1,-1,1) -- (1,1,1) -- (0,1,1) -- cycle;	
	\draw [black,->] (1,0,0) -- (2.5,0,0) node[anchor=south]{$x_1$};
	\draw [black,->] (0,1,0) -- (0,3,0) node[anchor=south]{$x_2$};
	\draw [black,->] (0,0,1) -- (0,0,2) node[anchor=east]{$x_3$};
  \draw [color=black,->] (3,1,0) -- (1.5,1,0);
  \draw [color=black,->] (1,2,2.5) -- (1,1,1.4);
  \draw [color=black,->] (3.1,1,2) -- (1.5,1,1);
	\draw (1,2.4,2.8) node [] {\Large$\hat{\Xi}$};
  \draw (3.4,1.1,0.2) node [] {\Large$\Xi$};
  \draw (3.5,1,2.1) node [] {\Large$B_{0,+}$};
\end{tikzpicture}
}
\caption{The half-cube $\Xi=B_{-}$, extended by $\hat\Xi$ to the polygonal domain $\tilde\Xi$,
and the rectangles $\gamma_{\nu}=B_{0}$ and $\gamma_{\nu}=B_{0,+}$.} 
\label{fig:cube}
\end{figure}
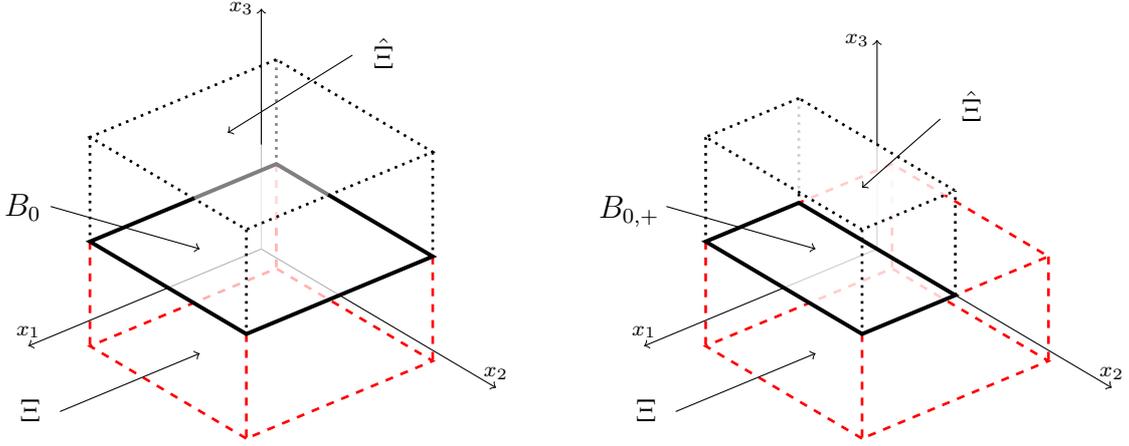

\begin{theo}
\label{gradphirot0}
There exists a continuous linear operator
$$\mathcal{S}_{\na}:\crczng\rightarrow\Ho(\rt)\cap\Hocgn(\Xi),$$
such that for all $E\in\crczng$
$$\na(\mathcal{S}_{\na}E)=E\quad\text{in }\Xi.$$
Especially $\crczng=\rczng=\na\Hocgn(\Xi)$
and the potential depends continuously on the data.
In particular these are closed subspaces of $\Ltom$.
\end{theo}

\begin{rem}
\label{gradphirot0rem}
The latter theorem is also from \cite[Lemma 4]{jochmanncompembmaxmixbc}. 
Nevertheless, we will give a modified and simplified proof. 
Moreover, as the proof will show the result holds for more general domains. 
Let $ \om\subset\rt $ be a bounded and simply connected domain and
let $(\om,\Gamma_{\nu})$ be a weak Lipschitz pair, such that $\Gamma_{\nu}$ is connected. Then a potential operator
$\mathcal{S}_{\na}:\crcznom\rightarrow\Hocn(\om)$ exists and 
$$\crcznom=\rcznom=\na\Hocn(\om).$$
If $\om$ is even strong Lipschitz, a continuous potential operator
$\mathcal{S}_{\na}:\crcznom\rightarrow\Ho(\rt)\cap\Hocn(\om)$ can be chosen.
\end{rem}

\begin{proof}
The case $\gamma_{\nu}=\emptyset$ is well known, i.e., $\crotgeng{0}{} = \rzg = \na\Ho(\Xi)=\na\Hoperp(\Xi)$, see Lemma \ref{RzgradHo}.
The other two cases can be treated together. We have
\begin{align*}
\na\Hocgn(\Xi)\subset\rczng\subset\crczng
\end{align*}
and need to show $\crczng\subset\na\Hocgn(\Xi)$.
Suppose $E\in\crczng$ and define $\tilde E\in\Lt(\tilde \Xi)$ by
\begin{align*}
\tilde E:=
\begin{cases}
E & \text{in }\Xi, \\
0 & \text{in }\hat \Xi.
\end{cases}
\end{align*}
It follows $\rot \tilde E=0$ in $\tilde \Xi$, as for any $\Phi\in\Cic(\tilde \Xi)$, 
due to $\ol\gamma_{\tau}\subset\p \tilde \Xi$, also $\Phi\in\Cictg$ and thus
$$0=\scpLtXi{E}{\rot\Phi} = \scp{\tilde E}{\rot\Phi}_{\Lt(\tilde\Xi)},$$
which means $\tilde E\in\rotspace_0(\tilde \Xi)$.
Because $\tilde \Xi$ is simply connected, there exists a potential $u\in\Ho(\tilde \Xi)$ with
\begin{align*}
\tilde E = \na u ~\text{in}~\tilde \Xi.
\end{align*}
In particular $\na u = 0$ in $\hat \Xi$ which implies $ u = c $ in $ \hat \Xi $ for some constant $ c\in\rz $. Define
\begin{align*}
\tilde u := u - c \in\Ho(\tilde \Xi).
\end{align*}
Note that $\tilde u = 0$ in $\hat \Xi$, so $\tilde u|_{\gamma_{\nu}} = 0$ follows, which means $\tilde u\in\Hocgntr(\Xi)$ 
and with Lemma \ref{Hoct} ($\om=\Xi$, $\Gamma_\tau=\gamma_\nu$) we conclude $\tilde u \in\Hocn(\Xi)$. Moreover
$\na\tilde u=\na u=E $ in $\Xi$. By Poincar\'e's inequality the potential $\tilde u$ depends continuously on the data $E$.
Using Calderon's extension theorem we can extend any potential in $\Hocgn(\Xi)$ continuously to $\Ho(\rt)$.
\end{proof}

Next up is the existence of an $ \Ho $-potential for divergence free fields subject to the special normal boundary condition. 
This Theorem is a modification of \cite[Theorem 1]{jochmanncompembmaxmixbc}, where the potential is only in $ \crcn $. 

\begin{theo}
\label{satzD0L6}
There exists a continuous linear operator
$$\mathcal{S}_{\mathsf{r}}:\cdczng\rightarrow\Ho(\rt)\cap\Hocgng,$$
such that for all $H\in\cdczng$
$$\rot(\mathcal{S}_{\mathsf{r}} H)=H\quad\text{in }\Xi.$$
Especially $\cdczng=\dczng=\rot\Hocgn(\Xi)=\rot\rcng=\rot\crcng$
and the $\Hocgn(\Xi)$-potential depends continuously on the data.
In particular these are closed subspaces of $\Ltom$.
\end{theo}

\begin{proof}
We start with the case $\gamma_{\nu}=\emptyset$.
By Lemma \ref{div0rot} $\cdivgeng{0}{} = \dzg = \rot\Ho(\Xi)$ 
and we can set $\mathcal{S}_{\mathsf{r}} H:=\T_{\mathsf{r}} H\in\Ho(\rt)$
with $\rot(\mathcal{S}_{\mathsf{r}} H)=H$ in $\Xi$. 
The other two cases can be treated together.
Suppose $H\in\cdczng$ and define $\tilde H\in\Lt(\tilde \Xi)$ by
\begin{align}
\label{defHschlange}
\tilde H:=
\begin{cases}
H&\text{in }\Xi,\\
0&\text{in }\hat \Xi.
\end{cases}
\end{align}
It follows $\div \tilde H=0$ in $\tilde \Xi$, as for any $\psi\in\Cic(\tilde \Xi)$, 
due to $\ol\gamma_{\tau}\subset\p \tilde \Xi$, also $\psi\in\Cictg$ and thus
$$0= \scpLtXi{H}{\na\psi} = \scpLtXit{\tilde H}{\na\psi},$$
which means $\tilde H\in\dz(\tilde \Xi)$. Because $\rt\setminus\ol{\tilde \Xi}$ is connected, Lemma \ref{div0rot} yields
$\T_{\mathsf{r}} \tilde H\in\Ho(\rt)\cap\dz(\rt)$
with $\rot(\T_{\mathsf{r}} \tilde H)=\tilde H$ in $\tilde \Xi$.
In particular $\T_{\mathsf{r}} \tilde H\in\Ho(\hat \Xi)$ and $\rot(\T_{\mathsf{r}} \tilde H) = 0$ in $\hat \Xi$.
Because $\hat \Xi$ is simply connected, there exists a unique $\varphi\in\Hoperp(\hat \Xi)$ with
\begin{align*}
\T_{\mathsf{r}} \tilde H=\na\varphi\quad\text{in }\hat \Xi.
\end{align*}
Since $ \T_{\mathsf{r}} \tilde H \in \Ho(\hat \Xi) $ we have $\varphi\in \Ht(\hat \Xi)$.
Let $\E:\Ht(\hat \Xi)\rightarrow \Ht(\rt)$ be a continuous, linear extension operator, for example Calderon's. Then
\begin{align*}
\Abb{\mathcal{S}_{\mathsf{r}}}{\cdczng}{\Ho(\rt)}{H}{\T_{\mathsf{r}} \tilde H-\na(\E\varphi)}
\end{align*}
is linear and continuous.
Since $\mathcal{S}_{\mathsf{r}} H = 0$ in $\hat \Xi$, we have $\mathcal{S}_{\mathsf{r}} H|_{\gamma_{\nu}} = 0$, 
which means $\mathcal{S}_{\mathsf{r}} H\in\Hocgntr(\Xi)$.
Hence $\mathcal{S}_{\mathsf{r}} H \in\Hocgn(\Xi)\subset\rcng\subset\crcng$ by Lemma \ref{Hoct}. Moreover
$$\rot(\mathcal{S}_{\mathsf{r}} H)= H\quad\text{in }\Xi,$$
as $\rot(\mathcal{S}_{\mathsf{r}} H) = \rot(\mathcal{T}_{\mathsf{r}} \tilde H) = \tilde H$ even in $\tilde \Xi$.
Recalling Lemma \ref{inklusionen} we see
$$\dczng\subset\cdczng\subset\rot\Hocgn(\Xi)\subset\rot\rcng\subset\dczng$$
and $\rot\rcng\subset\rot\crcng\subset\cdczng$,
completing the proof.
\end{proof}

\begin{rem}
\label{remrotpot}
Inspection of the above proof shows that the latter theorem holds for more general domains. 
Let $ \om\subset\rt $ be a bounded strong Lipschitz domain, such that $ \rt\setminus\ol\om $ is connected, 
and let $ \Gamma_\nu = \bigcup_{k=1}^{K} \Gamma_{\nu,k} $, $ K\in\nz $, 
with disjoint, relatively open and simply connected strong Lipschitz surface patches $ \Gamma_{\nu,k} \subset \Gamma$, 
where $ \dist (\Gamma_{\nu,k}, \Gamma_{\nu,\ell}) > 0 $ for all $ 1 \leq k \neq \ell \leq K $. 
Now extend $ \om $ over $ \Gamma_{\nu,k} $ by $ \hat \om_k $, let $ \tilde\om $ 
denote the interior of $ \ol\om\cup \ol{\hat\om}_1 \cup \dots \cup \ol{\hat\om}_K $ 
and define $ \tilde H $ like in \eqref{defHschlange}. Then $\tilde H\in\dz(\tilde \om) $.
Lemma \ref{div0rot} yields
$\T_{\mathsf{r}} \tilde H\in\Ho(\rt)\cap\dz(\rt)$
with $\rot(\T_{\mathsf{r}} \tilde H)=\tilde H$ in $\tilde \om$.
Again $ \rot(\T_{\mathsf{r}} \tilde H) = 0 $ in $ \hat \om_k $ for $ k = 1,\dots,K $.
Continuing analogously and since the $\hat\om_k$ are simply connected,
there exist unique potentials $\varphi_1,\dots, \varphi_K\in\Hoperp(\hat\om_{k})$ with
$\T_{\mathsf{r}} \tilde H=\na\varphi_k$ in $\hat\om_k$.
As before $ \varphi_k\in\Ht(\hat\om_k) $. 
Let $\E_k:\Ht(\hat\om_k)\rightarrow \Ht(\rt)$, $ k=1,\dots,K $, be extension operators. 
By cutting off appropriately it can be arranged that 
$ \supp(\E_k \varphi_k)\cap \ol{\hat\om}_\ell = \emptyset $ for all $ 1 \leq k \neq \ell \leq K $.
We define
\begin{align*}
\mathcal{S}_{\mathsf{r}} H:=\T_{\mathsf{r}} \tilde H - \sum_{k=1}^{K}\na(\E_k\varphi_k) \in\Ho(\rt).
\end{align*}
Again from $\mathcal{S}_{\mathsf{r}} H = 0$ in $\hat \om_k$, $ k=1,\dots,K $, 
$\mathcal{S}_{\mathsf{r}} H|_{\Gamma_{\nu}} = 0$ follows, 
which means $\mathcal{S}_{\mathsf{r}} H\in\Hocntr(\om)$ and therefore
$$\mathcal{S}_{\mathsf{r}} H \in\Hocn(\om)\subset\rcn(\om)\subset\crcnom.$$
Moreover $\rot(\mathcal{S}_{\mathsf{r}} H)=H$ in $\om$, 
as $\rot(\mathcal{S}_{\mathsf{r}} H) = \rot(\T_{\mathsf{r}} \tilde H) = \tilde H$ even in $\tilde \om$.
\end{rem}

\begin{theo}
\label{satzLtL6}
There exists a continuous linear operator
$$\mathcal{S}_{\mathsf{d}}:\LtXi\rightarrow\Ho(\rt)\cap\Hocgng,$$
such that for all $h\in\LtXi$
$$\div(\mathcal{S}_{\mathsf{d}} h)=h\quad\text{in }\Xi.$$
Especially $\LtXi=\div\Hocgn(\Xi)=\div\dcng=\div\cdcng$
and the $\Hocgn(\Xi)$-potential depends continuously on the data.
In particular these are closed subspaces of $\Ltom$.
\end{theo}

\begin{proof}
The case $\gamma_{\nu}=\emptyset$ immediately follows from Lemma \ref{lemL2divFT},
as for $h\in\LtXi$ we define
$$\mathcal{S}_{\mathsf{d}}h:=\T_{\mathsf{d}}h\in\Ho(\rt)\cap\rotspace_0(\rt).$$
The other two cases can again be handled together. 
Instead of extending $ \Xi $ over $ \gamma_\nu $ by a rectangle (as before), 
we extend it by a bubble in a way that $ \hat\Xi $ has a $ \Cont^3 $-boundary 
and $ \ol{\hat \Xi} \cap \ol \Xi = \ol{\gamma}_\nu $, cf. Figure \ref{muetze}. 
This smoothness of $ \hat\Xi $ allows for a later application of a standard Maxwell regularity result \cite{weberregmax}. 
Now let $h\in\LtXi$ and define $\tilde h\in\Lt(\tilde \Xi)$ by
\begin{align*}
\tilde h:=
\begin{cases}
h & \text{in }\Xi,\\
0 & \text{in }\hat \Xi.
\end{cases}
\end{align*}
Lemma \ref{lemL2divFT} yields
$$\T_{\mathsf{d}} \tilde h\in\Ho(\rt)\cap\rotspace_0(\rt)$$
with
\begin{align*}
\div(\T_{\mathsf{d}} \tilde h)=\tilde h~~\text{in}~\tilde \Xi.
\end{align*}
In particular  $\T_{\mathsf{d}} \tilde h\in\Ho(\hat \Xi)$ and $\div(\T_{\mathsf{d}} \tilde h) = 0$ in $\hat \Xi$.
Because $\rt\setminus\hat \Xi$ is connected, by Lemma \ref{div0rot} there exists 
a potential $\Phi\in\Ho(\rt) \cap \divspace_0(\rt)$ with
\begin{align*}
\T_{\mathsf{d}} \tilde h=\rot\Phi\quad\text{in}~\hat \Xi,
\end{align*}
so $\rot\Phi\in \Ho(\hat \Xi)$. Let $ \pi $
be the Helmholtz projector\footnote{For $F\in\Lt(\hat\Xi)$ solve by Lax-Milgram
$$\forall\,\varphi\in\Hoperp(\hat\Xi)\qquad\scp{\na u}{\na\varphi}_{\Lt(\hat\Xi)}=\scp{E}{\na\varphi}_{\Lt(\hat\Xi)}$$
with $u\in\Hoperp(\hat\Xi)$. Then the projector $\pi$ is given by
$\pi E:=E-\na u\in\big(\na\Hoperp(\hat\Xi)\big)^{\perp}$
since $\big(\na\Hoperp(\hat\Xi)\big)^{\perp}=\big(\na\Ho(\hat\Xi)\big)^{\perp}=\dcz(\hat \Xi)$.} 
onto solenoidal fields $ \dcz(\hat \Xi) $. With $ \Phi\in\Ho(\hat \Xi)\subset \rotspace(\hat \Xi) $ 
it follows $ \pi\Phi\in \rotspace(\hat \Xi)\cap\dcz(\hat \Xi) $ and $ \rot\pi\Phi = \rot\Phi \in\Ho(\hat \Xi) $. 
Thus $ \pi\Phi \in \Ht(\hat \Xi) $ by standard Maxwell regularity \cite{weberregmax}.
Let $\E:\Ht(\hat \Xi)\rightarrow \Ht(\rt)$ be a continuous, linear, e.g. Calderon's, extension operator. Define
\begin{align*}
\mathcal{S}_{\mathsf{d}} h:=\T_{\mathsf{d}} \tilde h-\rot(\E\pi\Phi) \in\Ho(\rt).
\end{align*}
Then $\mathcal{S}_{\mathsf{d}}:\LtXi\rightarrow\Ho(\rt)$ is linear and continuous.
With $ \mathcal{S}_{\mathsf{d}} h = 0 $ in $ \hat \Xi $ 
we see $ \mathcal{S}_{\mathsf{d}} h|_{\gamma_\nu} = 0 $, 
which means $\mathcal{S}_{\mathsf{d}} h\in\Hocgntr(\Xi)$ 
and with Lemma \ref{Hoct} $\mathcal{S}_{\mathsf{d}} h \in\Hocgn(\Xi)\subset\rcng\subset\crcng$. 
Moreover
$$\div(\mathcal{S}_{\mathsf{d}} h)=h\quad\text{in }\Xi,$$
as $\div(\mathcal{S}_{\mathsf{d}} h) = \div(\T_{\mathsf{d}} \tilde h) = \tilde h$ even in $\tilde \Xi$.
\end{proof}

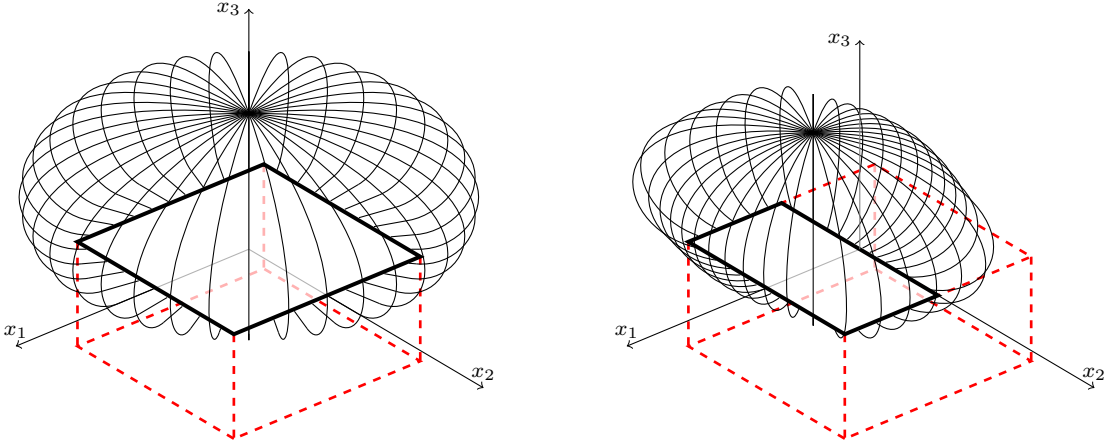
\begin{figure}
\centering
{\footnotesize		
\tdplotsetmaincoords{60}{140}
\begin{tikzpicture}[scale=1.6,tdplot_main_coords]
	\draw [black] (0,0,0) -- (1,0,0);
	\draw [black] (0,0,0) -- (0,1,0);	
	\draw [gray] (0,0,0) -- (0,0,1.3);
	\draw [red,dashed,line width=1pt] (1,1,-1) -- (1,-1,-1) -- (-1,-1,-1) -- (-1,1,-1) -- cycle;	
	\draw [red,dashed,line width=1pt] (1,1,0) -- (1,1,-1);							
	\draw [red,dashed,line width=1pt] (1,-1,0) -- (1,-1,-1);							
	\draw [red,dashed,line width=1pt] (-1,-1,0) -- (-1,-1,-1);						
	\draw [red,dashed,line width=1pt] (-1,1,0) -- (-1,1,-1);							
	\fill [white,opacity=.7] (1,1,0) -- (1,-1,0) -- (-1,-1,0) -- (-1,1,0) -- cycle;				
	\draw [black,line width=1.5pt] (1,1,0) -- (1,-1,0) -- (-1,-1,0) -- (-1,1,0) -- cycle;		
	\foreach \angle in {0,10,...,350}{
		\tdplotsetthetaplanecoords{\angle}			
		\draw [black,solid,tdplot_rotated_coords] (1.3,0) .. controls (1.3,2.5) and (0,2) .. (0,1.4);
	}
	\foreach \angle in {0,10,...,350}{
		\tdplotsetthetaplanecoords{\angle}			
		\ifthenelse{\not 0=\angle \and \not 180=\angle \and \not 90=\angle \and \not 270=\angle}	
		{ \def\endp{{min(abs(1/sin(\angle)),abs(1/cos(\angle)))}} }							
		{ \def\endp{1} }															
		\draw [black,solid,tdplot_rotated_coords] (0,1.4) -- (0,{(\endp)+0.03});
	}
	\draw [black,->] (1,0,0) -- (2.5,0,0) 	node[anchor=south]{$x_1$};
	\draw [black,->] (0,1,0) -- (0,3,0) 	node[anchor=south]{$x_2$};
	\draw [black,->] (0,0,1.3) -- (0,0,2.3) 	node[anchor=east]{$x_3$};
\end{tikzpicture}
\tdplotsetmaincoords{60}{140}
\begin{tikzpicture}[scale=1.6,tdplot_main_coords]
	\draw [black] (0,0,0) -- (1,0,0);
	\draw [black] (0,0,0) -- (0,1,0);
	\draw [red,dashed,line width=1pt] (1,1,-1) -- (1,-1,-1) -- (-1,-1,-1) -- (-1,1,-1) -- cycle;	
	\draw [red,dashed,line width=1pt] (1,1,0) -- (1,1,-1);							
	\draw [red,dashed,line width=1pt] (1,-1,0) -- (1,-1,-1);							
	\draw [red,dashed,line width=1pt] (-1,-1,0) -- (-1,-1,-1);						
	\draw [red,dashed,line width=1pt] (-1,1,0) -- (-1,1,-1);							
	\fill [white,opacity=.7] (1,1,0) -- (1,-1,0) -- (-1,-1,0) -- (-1,1,0) -- cycle;				
	\draw [red,dashed,line width=1pt] (0,-1,0) -- (0,1,0) -- (-1,1,0) -- (-1,-1,0) -- cycle; 	
	\draw [black,line width=1.5pt] (1,1,0) -- (1,-1,0) -- (0,-1,0) -- (0,1,0) -- cycle;		
	\coordinate (Orig) at (0.5,0,0);			
	\tdplotsetrotatedcoordsorigin{(Orig)}
	\foreach \angle in {0,10,...,350}{
		\tdplotsetthetaplanecoords{\angle}			
		\ifthenelse{\not 0=\angle \and \not 180=\angle \and \not 90=\angle \and \not 270=\angle}
		{
			\ifthenelse{0 < \angle \and \angle < 63}
			{ \def\endp{{min(abs(0.5/sin(\angle)),abs(0.5/cos(\angle)))}} }	{}
			\ifthenelse{63 < \angle \and \angle < 116}
			{ \def\endp{{min(abs(1/sin(\angle)),abs(1/cos(\angle)))}} }		{}
			\ifthenelse{116 < \angle \and \angle < 243}
			{ \def\endp{{min(abs(0.5/sin(\angle)),abs(0.5/cos(\angle)))}} }	{}
			\ifthenelse{243 < \angle \and \angle < 296}
			{ \def\endp{{min(abs(1/sin(\angle)),abs(1/cos(\angle)))}} }		{}
			\ifthenelse{296 < \angle \and \angle < 360}
			{ \def\endp{{min(abs(0.5/sin(\angle)),abs(0.5/cos(\angle)))}} }	{}
			\ifthenelse{\angle = 50}	{ \def\endp{{abs(0.5/cos(\angle))}} }		{}
			\ifthenelse{\angle = 60}	{ \def\endp{{abs(0.5/cos(\angle))}} }		{}
			\ifthenelse{\angle = 120}	{ \def\endp{{abs(0.5/cos(\angle))}} }		{}
			\ifthenelse{\angle = 130}	{ \def\endp{{abs(0.5/cos(\angle))}} }		{}
			\ifthenelse{\angle = 230}	{ \def\endp{{abs(0.5/cos(\angle))}} }		{}
			\ifthenelse{\angle = 240}	{ \def\endp{{abs(0.5/cos(\angle))}} }		{}
			\ifthenelse{\angle = 300}	{ \def\endp{{abs(0.5/cos(\angle))}} }		{}
			\ifthenelse{\angle = 310}	{ \def\endp{{abs(0.5/cos(\angle))}} }		{}
		}{}
		\ifthenelse{\angle=0}		{ \def\endp{0.5} }	{}
		\ifthenelse{\angle=180}	{ \def\endp{0.5} }	{}
		\ifthenelse{\angle=90	}	{ \def\endp{1} }		{}
		\ifthenelse{\angle=270}	{ \def\endp{1} }		{}
		\draw [black,solid,tdplot_rotated_coords] (1.3,0) .. controls (1.3,{(\endp)+1}) and (0.05,{(\endp)+0.8}) .. (0,{(\endp)+0.03});
	}
	\draw [black,->] (1,0,0) -- (2.5,0,0) 	node[anchor=south]{$x_1$};
	\draw [black,->] (0,1,0) -- (0,3,0) 	node[anchor=south]{$x_2$};
	\draw [gray] (0,0,0) -- (0,0,1.3);
	\draw [black,->] (0,0,1.3) -- (0,0,2) 	node[anchor=east]{$x_3$};
\end{tikzpicture}
}
\caption{The half-cube $\Xi=B_{-}$, extended by a $\Cont^3$-domain $\hat\Xi$
to $ \tilde \Xi $, and the rectangles $\gamma_{\nu}=B_{0}$ and $\gamma_{\nu}=B_{0,+}$.} 
\label{muetze}
\end{figure}

\begin{rem}
\label{remdivpot}
Theorem \ref{satzLtL6} again holds for more general domains $\om\subset\rt$.
For example, $\om$ can be a bounded strong Lipschitz domain with 
$$ \Gamma_\nu = \overset{K}{\underset{k=1}{\dot\bigcup}}\Gamma_{\nu,k},\quad K\in\nz,$$
where $ \dist (\Gamma_{\nu,k}, \Gamma_{\nu,\ell}) > 0 $ for all $ 1 \leq k \neq \ell \leq K $
and $\Gamma_{\nu,k}$ are $\Cont^{3}$-boundary patches allowing 
for $\Cont^{3}$-regular extensions $\hat\om_{\nu,k}$
having connected complements $\rt\setminus\ol{\hat\om}_{\nu,k}$.
If $\Gamma_{\nu}=\Gamma$, then the right hand side $h$ must have vanishing mean value,
i.e., $h\in\Ltperpom$.
\end{rem}

We note that in the case of $\gamma_{\nu}=\gamma$ resp. $\Gamma_{\nu}=\Gamma$  
Theorem \ref{satzLtL6} resp. Remark \ref{remdivpot}
is a well known result for bounded Lipschitz domains\footnote{Again, in this case 
$h$ must have vanishing mean value.}.
An elegant proof can be found in \cite[Lemma 2.1.1]{sohrbook}.
See also \cite[Lemma 3.2]{bauerneffpaulystarkedevDivsymCurl} for more recent results
including mixed boundary conditions, where it has been shown 
that Theorem \ref{satzLtL6} resp. Remark \ref{remdivpot} even holds for general bounded strong Lipschitz pairs 
$(\Xi,\gamma_{\nu})$ resp. $(\om,\Gamma_{\nu})$.

\subsection{Weak equals strong for the half-cube in terms of boundary conditions}

Now the two main density results immediately follow.
We note that this has already been proved for the $\Hoom$-spaces in Lemma \ref{Hoct}, i.e., $\cHoctom=\Hoctom$.

\begin{theo}
\label{rss}
$\crcng=\rcng$ and $\cdcng=\dcng$.
\end{theo}

\begin{proof}
Suppose $E\in\crcng$ and thus $\rot E\in\cdcztg$. By Theorem \ref{satzD0L6} there exists 
$\hat E\in\rcng$ with $\rot\hat E = \rot E$. 
By Theorem \ref{gradphirot0} we get $E-\hat E\in\crczng=\rczng$
and hence $E\in\rcng$.
Analogously let $H\in\cdcng$ and thus $\div H\in\Lt(\Xi)$.
By Theorem \ref{satzLtL6} there exists 
$\hat H\in\dcng$ with $\div\hat H = \div H$. 
By Theorem \ref{satzD0L6} we get $H-\hat H\in\cdczng=\dczng$
and hence $H\in\dcng$.
\end{proof}

\section{The compact embedding}

\subsection{Compact embedding on the half-cube}

First we show the main result on the half-cube $\Xi=B_{-}$ with the special boundary patch
$$\gamma_{\nu}=\emptyset,\quad\gamma_{\nu}=B_{0}\quad\text{or}\quad\gamma_{\nu}=B_{0,+}$$
from the latter section. For this let $\eps\in\Li(\Xi)$ be an admissible matrix field.

\begin{theo}
\label{HSG}
The embedding $\rctg\cap\eps^{-1}\dcng \hookrightarrow \Lt(\Xi)$ is compact.
\end{theo}

\begin{proof}
The cases of full boundary conditions, i.e., $\gamma_{\tau}=\gamma$ or $\gamma_{\tau}=\emptyset$,
are well known, see the introduction. 
Suppose $\gamma_{\nu}=B_{0}$ or $\gamma_{\nu}=B_{0,+}$.
Let $(H_n)_{n\in\nz}$ be a bounded sequence in $\rctg\cap\eps^{-1}\dcng$. 
By Riesz' representation theorem\footnote{We equip $ \Hocgt(\Xi) $ with 
the scalar product $ \scpLtXi{\eps\na\,\cdot\,}{\na\,\cdot\,} .$}, 
for all $n\in\nz$ there exists a unique $u_n\in\Hocgt(\Xi)$ with
\begin{align}
\label{psindef}
\scpLtXi{\eps\na u_n}{\na\varphi} = \scpLtXi{\eps H_n}{\na\varphi}~\text{for all}~\varphi\in\Hocgt(\Xi).
\end{align}
Furthermore $\norms{u_n}_{\Ho(\Xi)} \leq c\norms{H_n}_{\LtXi}$
and w.l.o.g.~by Rellich's selection theorem $(u_{n})$ converges in $\LtXi$.
By definition, \eqref{psindef} together with Theorem \ref{satzD0L6} 
implies $$H_n-\na u_n\in\eps^{-1}\cdczng = \eps^{-1} \dczng$$ 
and  $\na u_n\in\rcztg$ by Theorem \ref{gradphirot0}, so 
$$\tilde H_n :=H_n-\na u_n \in \rctg\cap\eps^{-1}\dczng.$$
Now we apply Theorem \ref{satzD0L6} to $\eps\tilde H_n$ and define 
$E_n:=\mathcal{S}_{\mathsf{r}}\eps\tilde H_n\in\Hocgn(\Xi)$, which satisfies
$$\norms{E_n}_{\Ho(\Xi)}
\leq c\norms{\tilde H_n}_{\LtXi}
\leq c\norms{H_n}_{\LtXi}.$$
W.l.o.g.~by Theorem \ref{satzD0L6} and Rellich's selection theorem $(E_{n})$ converges in $\LtXi$.
Moreover, we observe $\rot E_n = \eps \tilde H_n$ and thus
\begin{align*}
\normsLtg{\sqrt{\eps}(\tilde H_n- \tilde H_m)}^2
&=\scpLtXi{\tilde H_n- \tilde H_m}{\rot(E_n-E_m)}\\
&=\scpLtXi{\rot(\tilde H_n- \tilde H_m)}{E_n-E_m}
\leq c\normsLtg{E_n-E_m},
\end{align*}
as $\rot \tilde H_n = \rot H_n$. Thus $(\tilde H_n)$ converges in $\LtXi$. 
Moreover, \eqref{psindef} yields
\begin{align*}
\normsLtg{\sqrt{\eps}\na(u_n-u_m)}^2
&= \scpLtXi{\eps(H_n-H_m)}{\na(u_n-u_m)}\\
&=-\scpLtXi{\div(\eps(H_n-H_m))}{u_n-u_m}
\leq c\normsLtg{u_n-u_m}
\end{align*}
and hence also $(\na u_{n})$ converges in $\LtXi$, i.e., $(u_{n})$ converges in $\Ho(\Xi)$.
Therefore, $(H_{n})$ converges in $\LtXi$.
\end{proof}

\subsection{The compact embedding for weak Lipschitz domains}
\label{mcpweaklip}

The aim of this section is to transfer Theorem \ref{HSG} to arbitrary weak Lipschitz 
pairs $(\om,\Gamma_{\tau})$. 
We need the technical Lemma \ref{lemtrafo}, 
for a proof see \cite[Section 3]{picardcomimb} or \cite[Remark 2]{wecktrace}.
Let us consider the following situation:
Let $\Theta$, $\tilde{\Theta}$ be two domains in $\rt$
with boundaries $\Upsilon:=\p\Theta$, $\tilde\Upsilon:=\p\tilde\Theta$
and $\Upsilon_0\subset\Upsilon$, let's say, relatively open.
Moreover, let 
$$\phi:\Theta\to\tilde\Theta,\qquad
\psi:=\phi^{-1}:\tilde\Theta\to\Theta$$
be Lipschitz diffeomorphisms, this is,
$\phi\in\Czo(\Theta,\tilde\Theta)$ and $\psi=\phi^{-1}\in\Czo(\tilde\Theta,\Theta)$. 
Hence there exists a constant $c$, such that for all $x\in\Theta$ and $\tilde x\in\tilde\Theta$
$$0<c\leq\big|\det\phi'(x)\big|,\big|\det\psi'(\tilde x)\big|\leq1/c.$$
Then $\tilde\Theta=\phi(\Theta)$, $\tilde\Upsilon=\phi(\Upsilon)$
and we define $\tilde\Upsilon_0:=\phi(\Upsilon_0)$.
To simplify the notations here and throughout this section and the appendix
we will use the notation 
$$\tilde u:=u\circ\psi,\qquad
\utilde v:=v\circ\phi$$
both for functions and for vector fields. We set, identify and note
$$J:=J_{\psi}=\psi'\in\Li(\tilde\Theta),\qquad
\phi'=J_{\phi}=J^{-1}\circ\phi=\utilde J{}^{-1}\in\Li(\Theta).$$

\begin{lem}
\label{lemtrafo}
Let $u\in\HoctUT$, 
$E\in\crotgen{\Upsilon_0}{\circ}(\Theta)$ resp. $E\in\rotgen{\Upsilon_0}{\circ}(\Theta)$ 
and $H\in\cdivgen{\Upsilon_0}{\circ}(\Theta)$ resp. $H\in\divgen{\Upsilon_0}{\circ}(\Theta)$. Then
\begin{align*}
\tilde u&\in\HoctUTt&
&\text{and}&
\na\tilde u&=J^{\top}\widetilde{\na u},\\
J^{\top}\tilde E&\in\crotgen{\tilde\Upsilon_0}{\circ}(\tilde\Theta)
\text{ resp. }\rotgen{\tilde\Upsilon_0}{\circ}(\tilde\Theta)&
&\text{and}&
\rot(J^{\top}\tilde E)&=(\det J)J^{-1}\widetilde{\rot E},\\
(\det J)J^{-1}\tilde H&\in\cdivgen{\tilde\Upsilon_0}{\circ}(\tilde\Theta)
\text{ resp. }\divgen{\tilde\Upsilon_0}{\circ}(\tilde\Theta)&
&\text{and}&
\div((\det J)J^{-1}\tilde H)&=\det J\widetilde{\div H}.
\end{align*}
\end{lem}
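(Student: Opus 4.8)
The three formulas are the vector-analytic incarnations of the pullback laws for differential forms of degrees $0$, $1$ and $2$ under the change of variables $\psi$, combined with the fact that the exterior derivative commutes with pullback. Accordingly, the plan is to first record the pointwise (almost everywhere) identities
\begin{align*}
\na\tilde u=J^{\top}\widetilde{\na u},\quad
\rot(J^{\top}\tilde E)=(\det J)J^{-1}\widetilde{\rot E},\quad
\div\big((\det J)J^{-1}\tilde H\big)=(\det J)\widetilde{\div H},
\end{align*}
valid for smooth $u$, $E$, $H$ and a smooth diffeomorphism $\psi$. The first is the plain chain rule, while the second and third are the Piola-type identities, which I would verify either by a direct (elementary but tedious) computation using the cofactor representation of $J^{-1}$, or by recognizing them as $d\circ\psi^{*}=\psi^{*}\circ d$ for $1$- and $2$-forms.

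Since $\phi$ and $\psi$ are only Lipschitz, $J=\psi'$ exists only almost everywhere (Rademacher) and $J$, $J^{-1}$, $\det J$ are merely $\Li$ with two-sided bounds; hence the three transformation maps $u\mapsto\tilde u$, $E\mapsto J^{\top}\tilde E$ and $H\mapsto(\det J)J^{-1}\tilde H$ are bounded on $\Lt$. To reach this low-regularity setting I would mollify $\phi$ and $\psi$ to smooth diffeomorphisms, apply the identities above, and pass to the limit using the uniform $\Li$-bounds, so that the pointwise identities persist almost everywhere for the Lipschitz charts.

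For the weak spaces I would verify the defining integration-by-parts identity directly. Given a test field $\tilde\Phi$ on $\tilde\Theta$ supported away from $\tilde\Upsilon\setminus\overline{\tilde\Upsilon_{0}}$, the analogous covariant transform associated with $\phi$ turns it into a test field on $\Theta$ supported away from $\Upsilon\setminus\overline{\Upsilon_{0}}$, because $\psi$ carries the former complement onto the latter. Substituting into the $\Lt(\tilde\Theta)$-pairing, the change-of-variables formula $\int_{\tilde\Theta}(\cdot)=\int_{\Theta}(\cdot)\,|\det\phi'|$ produces Jacobian factors that cancel exactly against the weights $\det J$, $J$ and $J^{-1}$; this reduces the required identity on $\tilde\Theta$ for $J^{\top}\tilde E$ (respectively $(\det J)J^{-1}\tilde H$) to the assumed identity on $\Theta$ for $E$ (respectively $H$). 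For the strong spaces I would instead use that they are closures of fields vanishing near $\Upsilon_{0}$: such a field transforms into one vanishing near $\phi(\Upsilon_{0})=\tilde\Upsilon_{0}$, and the transformation is continuous from the $\rot$- (respectively $\div$-) graph norm on $\Theta$ to the corresponding graph norm on $\tilde\Theta$ by the intertwining identity together with the $\Li$-bounds and the $\Lt$ change of variables, so it maps the closure into the closure. The $\Ho$-statement, including the boundary condition, follows from the classical chain rule for bi-Lipschitz maps together with $\psi(\tilde\Upsilon_{0})=\Upsilon_{0}$ and Lemma \ref{Hoct}.

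The main obstacle is exactly the merely Lipschitz regularity of the charts: the pointwise identities hold a priori only for smooth $\psi$, and the transformed fields are only bounded and measurable rather than smooth or Lipschitz, so neither the pointwise chain rule nor a naive density argument applies directly. The mollification-and-limit procedure (or, equivalently, pushing everything through the weak integral identities, where only $\Lt$--$\Li$ pairings occur) resolves this. The delicate bookkeeping is the exact cancellation of the change-of-variables Jacobian against the weights $\det J$, $J$, $J^{-1}$, carried out simultaneously with the tracking of the boundary parts $\Upsilon_{0}\leftrightarrow\tilde\Upsilon_{0}$.
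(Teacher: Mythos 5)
The paper does not prove Lemma \ref{lemtrafo} itself but defers to \cite[Section~3]{picardcomimb} and \cite[Remark~2]{wecktrace}, and your sketch follows essentially that standard route: pullback identities for $0$-, $1$- and $2$-forms with $d\circ\psi^{*}=\psi^{*}\circ d$, Jacobian cancellation in the weak pairings with correctly transformed test fields, and closure arguments for the strong spaces. The only fragile point is the proposal to mollify the charts themselves (mollified bi-Lipschitz maps need not be injective nor have Jacobians bounded away from zero); your alternative of pushing everything through the weak integral identities --- where the a.e.\ chain rule for Lipschitz compositions and the distributional Piola identity for the cofactor matrix suffice --- is the safe and standard way to close that step.
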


For a rigorous proof of the latter lemma see the appendix of our contributions 
\cite{bauerpaulyschomburgmaxcompweakliprNarxiv,bauerpaulyschomburgmaxcompweakliprN}.

From now on, we make the following

\vspace*{4mm}
\noindent
{\bf General Assumption:} Let $(\om,\Gamma_{\tau})$
be a weak Lipschitz pair as in Definitions \ref{defilipmani} and \ref{defilipsubmani}. 
\vspace*{4mm}

We adjust Lemma \ref{lemtrafo} to our situation:
Let $U_1,\dots,U_K$ be an open covering of $\Gamma$
according to Definitions \ref{defilipmani} and \ref{defilipsubmani}
and set $U_{0}:=\om$. Therefore $U_0,\dots,U_K$ is an open covering of $\ol\om$.
Moreover let $\chi_k\in\Cic(U_k)$, $k\in\{0,\dots,K\}$, 
be a partition of unity subordinate to the open covering $U_0,\dots,U_K$.
Now suppose $k\in\{1,\dots,K\}$. We define
\begin{align*}
\om_k&:=U_k\cap\om,&
\Gamma_k&:=U_k\cap\Gamma,&
\Gamma_{\tau,k}&:=U_k\cap\Gamma_{\tau},&
\Gamma_{\nu,k}&:=U_k\cap\Gamma_{\nu},\\
\hat\Gamma_{k}&:=\p\om_{k},&
\Sigma_k&:=\hat\Gamma_{k}\setminus\Gamma,&
\hat\Gamma_{\tau,k}&:=\textrm{int}(\Gamma_{\tau,k}\cup\ol\Sigma_k),&
\hat\Gamma_{\nu,k}&:=\textrm{int}(\Gamma_{\nu,k}\cup\ol\Sigma_k),\\
&&
\sigma&:=\gamma\setminus\ol B_{0},&
\hat\gamma_{\tau}&:=\textrm{int}(\gamma_{\tau}\cup\ol\sigma),&
\hat\gamma_{\nu}&:=\textrm{int}(\gamma_{\nu}\cup\ol\sigma).
\end{align*}
Lemma \ref{lemtrafo} will from now on be used with 
$$\Theta:=\om_k,\quad
\tilde\Theta:=\Xi,\qquad
\phi:=\phi_k:\om_{k}\to\Xi,\quad
\psi:=\psi_k:\Xi\to\om_{k}$$
and with one of the following cases
$$\Upsilon_0:=\Gamma_{\tau,k},\quad
\Upsilon_0:=\hat\Gamma_{\tau,k},\quad
\Upsilon_0:=\Gamma_{\nu,k}\quad\text{or}\quad
\Upsilon_0:=\hat\Gamma_{\nu,k}.$$
Then $\Upsilon=\hat\Gamma_{k}$ and $\tilde\Upsilon=\phi_{k}(\hat\Gamma_{k})=\gamma$ 
as well as (depending on the respective case)
\begin{align*}
\tilde\Upsilon_0
&=\phi_{k}(\Gamma_{\tau,k})
=\gamma_{\tau},&
\tilde\Upsilon_0
&=\phi_{k}(\hat\Gamma_{\tau,k})
=\hat\gamma_{\tau},&
\gamma_{\tau}
&\in\{\emptyset,B_{0},B_{0,-}\},&
\gamma_{\nu}
&=\gamma\setminus\ol\gamma_{\tau},\\
\tilde\Upsilon_0
&=\phi_{k}(\Gamma_{\nu,k})
=\gamma_{\nu},&
\tilde\Upsilon_0
&=\phi_{k}(\hat\Gamma_{\nu,k})
=\hat\gamma_{\nu},&
\gamma_{\nu}
&\in\{\emptyset,B_{0},B_{0,+}\},&
\gamma_{\tau}
&=\gamma\setminus\ol\gamma_{\nu}.
\end{align*}

\begin{rem}
\label{Bminus}
Theorems \ref{gradphirot0}, \ref{satzD0L6}, \ref{satzLtL6}
and Remarks \ref{gradphirot0rem}, \ref{remrotpot}, \ref{remdivpot} 
as well as Theorems \ref{rss}, \ref{HSG}
hold for $\gamma_{\nu}=B_{0,-}$ without any (substantial) modification as well.
\end{rem}

\begin{lem}
\label{kor320}
Let $k\in\{1,\dots,K\}$. For $E\in\crctom$ and $H\in\cdcnom$ we have
\begin{align*}
E\in\crotgen{\Gamma_{\tau,k}}{\circ}(\om_k),\qquad
\chi_k E\in\crcthat(\om_k),\qquad
H\in\cdivgen{\Gamma_{\nu,k}}{\circ}(\om_k),\qquad
\chi_k H\in\cdcnhat(\om_k).
\end{align*}
\end{lem}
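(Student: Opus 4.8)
The plan is to verify all four memberships directly from the weak definitions in \eqref{defschwach}, the only real work being to convert each admissible \emph{local} test field into an admissible \emph{global} one, so that the hypotheses $E\in\crctom$ and $H\in\cdcnom$ can be brought to bear. The two $\rot$-assertions and the two $\div$-assertions are mutually dual, so I would carry out the $\rot$-case in full and then transcribe it for $\div$, replacing $\na\chi_k\times(\cdot)$ by $\na\chi_k\cdot(\cdot)$ and the identity $\chi_k\rot\Phi=\rot(\chi_k\Phi)-\na\chi_k\times\Phi$ by $\chi_k\na\varphi=\na(\chi_k\varphi)-\varphi\na\chi_k$.

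For the restriction assertion $E\in\crotgen{\Gamma_{\tau,k}}{\circ}(\om_k)$ I would take a test field $\Phi$ for this local space, whose support is a positive distance from $\hat\Gamma_k\setminus\Gamma_{\tau,k}$, hence in particular from both the artificial interface $\Sigma_k$ and from $\Gamma_{\nu,k}$. First I would extend $\Phi$ by zero across $\Sigma_k$: since it already vanishes near $\Sigma_k$ the extension $\tilde\Phi$ stays smooth, and since $\supp\tilde\Phi\subset U_k$ avoids $\Gamma_{\nu,k}=U_k\cap\Gamma_\nu$ one has $\tilde\Phi\in\Cicnom$. Applying the weak identity of $E\in\crctom$ to $\tilde\Phi$ and restricting the integrals to $\om_k$ (where $\tilde\Phi=\Phi$) then reproduces exactly the defining relation $\scp{E}{\rot\Phi}_{\Lt(\om_k)}=\scp{\rot E}{\Phi}_{\Lt(\om_k)}$ of $\crotgen{\Gamma_{\tau,k}}{\circ}(\om_k)$. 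The membership $H\in\cdivgen{\Gamma_{\nu,k}}{\circ}(\om_k)$ is obtained in the same way, extending scalar test functions (vanishing near $\hat\Gamma_{\tau,k}$) by zero to elements of $\Cictom$.

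For the localisation assertion $\chi_k E\in\crcthat(\om_k)$ I would first record $\rot(\chi_k E)=\chi_k\rot E+\na\chi_k\times E\in\Lt(\om_k)$, valid since $\chi_k$ is smooth with compact support in $U_k$. Given a test field $\Phi$ for $\crcthat(\om_k)$, whose support avoids $\Gamma_{\nu,k}$ but may now reach $\Sigma_k$, the decisive point is that $\chi_k\Phi$ has support inside $\supp\chi_k\Subset U_k$, hence a positive distance from $\Sigma_k\subset\p U_k$ and from $\Gamma_{\nu,k}$; therefore $\chi_k\Phi$ extends by zero to an element of $\Cicnom$. I would then write $\scp{\chi_k E}{\rot\Phi}_{\Lt(\om_k)}=\scpLtom{E}{\chi_k\rot\Phi}$, split $\chi_k\rot\Phi=\rot(\chi_k\Phi)-\na\chi_k\times\Phi$, apply the weak identity of $E$ to $\chi_k\Phi$, and finally use the scalar triple-product identity $(\na\chi_k\times E)\cdot\Phi=-E\cdot(\na\chi_k\times\Phi)$ to recombine everything into $\scp{\rot(\chi_k E)}{\Phi}_{\Lt(\om_k)}$. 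The assertion $\chi_k H\in\cdcnhat(\om_k)$ follows analogously, using $\div(\chi_k H)=\chi_k\div H+\na\chi_k\cdot H$ and the sign-free identity $\scp{H}{\varphi\na\chi_k}=\scp{\na\chi_k\cdot H}{\varphi}$.

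The one step I would watch most carefully is the support bookkeeping across the artificial interface $\Sigma_k\subset\p U_k$: that $\supp\chi_k\Subset U_k$ keeps $\chi_k\Phi$ (resp.\ $\chi_k\varphi$) away from $\Sigma_k$, so that the zero-extension is genuinely admissible and lands in $\Cicnom$ (resp.\ $\Cictom$). This is precisely the mechanism that upgrades the boundary condition from $\Gamma_{\tau,k}$ to $\hat\Gamma_{\tau,k}$ (resp.\ from $\Gamma_{\nu,k}$ to $\hat\Gamma_{\nu,k}$): because $\chi_k$ vanishes near $\Sigma_k$, the cut-off field $\chi_k E$ (resp.\ $\chi_k H$) acquires a homogeneous tangential (resp.\ normal) trace there, which is exactly what the enlarged boundary part $\hat\Gamma_{\tau,k}=\textrm{int}(\Gamma_{\tau,k}\cup\ol\Sigma_k)$ (resp.\ $\hat\Gamma_{\nu,k}=\textrm{int}(\Gamma_{\nu,k}\cup\ol\Sigma_k)$) encodes.
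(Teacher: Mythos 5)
Your proposal is correct and follows essentially the same route as the paper: for the restriction claims one extends the local test fields (supported away from $\hat\Gamma_{\nu,k}$ resp.\ $\hat\Gamma_{\tau,k}$, hence away from the artificial interface $\Sigma_k$) by zero to admissible global test fields, and for the localisation claims one multiplies the local test field by $\chi_k$, uses $\supp\chi_k\Subset U_k$ to land in $\Cicnom$ resp.\ $\Cictom$, and recombines via the product rule exactly as you describe. The sign bookkeeping with $\na\chi_k\times(\cdot)$ and $\na\chi_k\cdot(\cdot)$ matches the paper's computation verbatim.
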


\begin{proof}
Let $\Phi\in\Cic_{\hat\Gamma_{\nu,k}}(\om_k)$. 
Extending $\Phi$ by zero shows $\Phi\in\Cicn(\om)$ and
\begin{align*}
\scp{E}{\rot\Phi}_{\Lt(\om_k)}
=\scpLtom{E}{\rot\Phi}
=\scpLtom{\rot E}{\Phi}
=\scp{\rot E}{\Phi}_{\Lt(\om_k)},
\end{align*}
hence $E\in\crotgen{\Gamma_{\tau,k}}{\circ}(\om_k)$. 
Let $\Phi\in\Cic_{\Gamma_{\nu,k}}(\om_k)$. 
Then, $\chi_k\Phi\in\Cic_{\hat\Gamma_{\nu,k}}(\om_k)\subset\Cicn(\om)$
since $\supp \chi_k\subset U_k$ and
\begin{align*}
\scp{\chi_k E}{\rot\Phi}_{\Lt(\om_k)}
&=\scpLtom{\chi_k E}{\rot\Phi}
=\scpLtom{E}{\rot(\chi_k\Phi)}-\scpLtom{E}{\na\chi_k\times\Phi}\\
&=\scpLtom{\rot E}{\chi_k\Phi}+\scpLtom{\na\chi_k\times E}{\Phi}
=\scp{\rot(\chi_k E)}{\Phi}_{\Lt(\om_k)},
\end{align*}
thus $\chi_k E\in\crcthat(\om_k)$. 
Analogously we see $H\in\cdivgen{\Gamma_{\nu,k}}{\circ}(\om_k)$
and $\chi_k H\in\cdcnhat(\om_k)$.
\end{proof}

\begin{theo}
\label{theoweakeqstrong}
$\crctom=\rctom$ and $\cdcnom=\dcnom.$
\end{theo}

\begin{proof}
Suppose $E\in\crctom$. Then $\chi_{0}E\in\rcom\subset\rctom$ by mollification.
Let $k\in\{1,\dots,K\}$. Then $E\in\crotgen{\Gamma_{\tau,k}}{\circ}(\om_k)$ by Lemma \ref{kor320}. 
Lemma \ref{lemtrafo}, Theorem \ref{rss} (with $\gamma_{\nu}:=\gamma_{\tau}$) 
and Remark \ref{Bminus} yield with $J_{k}:=J_{\psi_{k}}$
$$J^{\top}_k\tilde E\in\crctg=\rctg,\qquad
\gamma_{\tau}=\phi_{k}(\Gamma_{\tau,k})\in\{\emptyset,B_{0},B_{0,-}\}.$$
Hence, by definition there exists a sequence $(\tilde A_{k,\ell})\subset\Czoc_{\gamma_{\tau}}(\Xi)$ 
with $\tilde A_{k,\ell}\xrightarrow{\ell\to\infty}J^{\top}_k\tilde E$ in $\rotspace(\Xi)$.
Therefore $\tilde\chi_{k}\tilde A_{k,\ell}\in\Czoc_{\hat\gamma_{\tau}}(\Xi)$. 
Now define with Lemma \ref{lemtrafo}
$$E_{k,\ell}:=\chi_{k}\utilde J{}^{-\top}_k A_{k,\ell}
=(\tilde\chi_{k}J^{-\top}_k\tilde A_{k,\ell})\circ \phi_k
\in\overset{\circ}{\rotspace}_{\hat\Gamma_{\tau,k}}(\om_k)\subset\rctom.$$
Then with Lemma \ref{lemtrafo}
\begin{align*}
\norms{E_{k,\ell}-\chi_k E}^2_{\rom} 
&=\norms{E_{k,\ell}-\chi_k E}^2_{\rotspace(\om_k)}
=\int_{\om_k}\big(\normb{E_{k,\ell}-\chi_k E}^2
+\normb{\rot(E_{k,\ell}-\chi_k E)}^2\big)\\
&=\int_{\Xi}\norm{\det J_k}\Big(\normb{\tilde\chi_{k}(J^{-\top}_k\tilde A_{k,\ell}-\tilde E)}^2
+\normb{\widetilde{\rot\big(\chi_{k}(\utilde J{}^{-\top}_k A_{k,\ell}-E)\big)}}^2\Big)\\
&=\int_{\Xi}\norm{\det J_k}\Big(\normb{\tilde\chi_{k}(J^{-\top}_k\tilde A_{k,\ell}-\tilde E)}^2
+\normb{(\det J_k)^{-1}J_k\rot(J_{k}^{\top}\tilde\chi_{k}(J^{-\top}_k \tilde A_{k,\ell}-\tilde E)}^2\Big)\\
&\leq c\int_{\Xi}\Big(\normb{\tilde\chi_{k}(\tilde A_{k,\ell}-J^{\top}_k\tilde E)}^2
+\normb{\rot(\tilde\chi_{k}(\tilde A_{k,\ell}-J_{k}^{\top}\tilde E)}^2\Big)\\
&\leq c\int_{\Xi}\Big(\normb{\tilde A_{k,\ell}-J^{\top}_k\tilde E}^2
+\normb{\rot(\tilde A_{k,\ell}-J_{k}^{\top}\tilde E)}^2\Big)
=c\norms{\tilde A_{k,\ell}-J_{k}^{\top}\tilde E}_{\rotspace(\Xi)}^2
\xrightarrow{\ell\to\infty}0,
\end{align*}
i.e., $\chi_k E\in\rctom$ by Remark \ref{remlipclosure} (i), and thus $E=\sum_{k}\chi_k E\in\rctom$.
$\cdcnom=\dcnom$ is proved analogously for $H\in\cdcnom$ by approximating
$$(\det J_{k})J_{k}^{-1}\tilde H\in\cdcng=\dctg,\qquad
\gamma_{\nu}=\phi_{k}(\Gamma_{\nu,k})\in\{\emptyset,B_{0},B_{0,+}\},$$
with a sequence $(\tilde A_{k,\ell})\subset\Czoc_{\gamma_{\nu}}(\Xi)$ in $\divspace(\Xi)$.
\end{proof}

\begin{rem}
\label{remssabsch}
By Theorem \ref{theoweakeqstrong},
Lemma \ref{kor320} also holds for the spaces $\rctom$ and $\dcnom$.
More precisely, for $E\in\rctom$ and $H\in\dcnom$ we have for $k\in\{1,\dots,K\}$
\begin{align*}
E\in\rotgen{\Gamma_{\tau,k}}{\circ}(\om_k),\qquad
\chi_k E\in\rcthat(\om_k),\qquad
H\in\divgen{\Gamma_{\nu,k}}{\circ}(\om_k),\qquad
\chi_k H\in\dcnhat(\om_k).
\end{align*}
\end{rem}

Now the compact embedding for weak Lipschitz pairs $(\om,\Gamma_{\tau})$ can be proved.

\begin{theo}
\label{satzMKE}
Let $\eps\in\Li(\om)$ be an admissible matrix field.
Then the embedding
$$\rctom\cap\eps^{-1}\dcnom\hookrightarrow\Ltom$$
is compact.
\end{theo}

\begin{proof}
Suppose $(E_n)$ is a bounded sequence in $\rctom\cap\eps^{-1}\dcnom$. Then by mollification
$$E_{0,n}:=\chi_0E_n\in\rcom\cap\eps^{-1}\dcom,$$
$E_{0,n}$ even has compact support in $\om$,
and by classical results, see our introduction, $(E_{0,n})$ contains a $\Ltom$-converging subsequence,
again denoted by $(E_{0,n})$. Hence $E_{0,n}\to E_{0}$ in $\Ltom$ with some $E_{0}\in\Ltom$.
Let $k\in\{1,\dots,K\}$. By Lemma \ref{kor320} and Remark \ref{remssabsch}
$$E_{k,n}:=\chi_kE_n\in\rcthat(\om_k),\qquad
\eps E_{k,n}\in\dcnhat(\om_k)$$
and with
\begin{align*}
\rot E_{k,n}&=\chi_k\rot E_n+\na\chi_k\times E_n,&
\div(\eps E_{k,n})&=\chi_k\div(\eps E_n)+\na\chi_k\cdot\eps E_n
\end{align*}
the sequence $(E_{k,n})$ is bounded in $\rcthat(\om_k)\cap\eps^{-1}\dcnhat(\om_k)$. 
We define and see by Lemma \ref{lemtrafo}
$$\hat E_{k,n}:=J^{\top}_k\tilde E_{k,n}\in\rotgen{\hat\gamma_{\tau}}{\circ}(\Xi),\qquad
\rot\hat E_{k,n}=(\det J_k)J^{-1}_k\widetilde{\rot E_{k,n}}.$$
Hence
\begin{align*}
\norms{\hat E_{k,n}}^2_{\rotspace(\Xi)} 
&=\int_{\Xi}\big(|\hat E_{k,n}|^2 + |\rot\hat E_{k,n}|^2\big)
=\int_{\om_k}|\det\utilde J{}_k|^{-1} 
\Big(\big|\utilde J{}^{\top}_k E_{k,n}\big|^2 
+\big|(\det\utilde J{}_k)\utilde J{}^{-1}_k\rot E_{k,n}\big|^2\Big)\\
&\leq c\int_{\om_k}\big(|E_{k,n}|^2 
+|\rot E_{k,n}|^2\big)
=c\,\norms{E_{k,n}}^2_{\rotspace(\om_k)},
\end{align*}
showing that $(\hat E_{k,n})$ is bounded in $\rotgen{\hat\gamma_{\tau}}{\circ}(\Xi)$.
From
$$\hat E_{k,n}
=J^{\top}_k\tilde E_{k,n}
=(\det J_k)^{-1} J^{\top}_k\tilde\eps^{-1}J_k(\det J_k)J^{-1}_k\tilde\eps\tilde E_{k,n}$$
we observe by Lemma \ref{lemtrafo}
$$\hat\eps_k\hat E_{k,n} 
=(\det J_k)J^{-1}_k\tilde\eps\tilde E_{k,n}\in\divgen{\hat\gamma_{\nu}}{\circ}(\Xi),\quad
\hat\eps_k:=(\det J_k) J_k^{-1}\tilde\eps J^{-\top}_k,\quad
\div\hat\eps_k\hat E_{k,n}=(\det J_k)\widetilde{\div(\eps E_{k,n})}.$$
$\hat\eps_k$ is admissible, as $\hat\eps_k\in\Li(\Xi)$ and for all $H\in\Lt(\Xi)$
$$\scp{\hat\eps_kH}{H}_{\Lt(\Xi)}
=\scp{(\det J_k)J^{-1}_k\tilde\eps J^{-\top}_kH}{H}_{\Lt(\Xi)}
\geq c\,\norms{J^{-\top}_kH}^2_{\Lt(\Xi)}
\geq c\,\norms{H}_{\Lt(\Xi)}^2.$$
Then
\begin{align*}
\norms{\hat E_{k,n}}^2_{\hat\eps^{-1}_k\divspace(\Xi)}
&=\int_\Xi(|\hat E_{k,n}|^2 
+|\div\hat\eps_k\hat E_{k,n}|^2)
=\int_{\om_k}|\det\utilde J{}_k|^{-1} 
\Big(\big|\utilde J{}^{\top}_k E_{k,n}\big|^2 
+\big|(\det\utilde J{}_k)\div(\eps E_{k,n})\big|^2\Big)\\
&\leq c\int_{\om_k}\big(|E_{k,n}|^2 
+|\div(\eps E_{k,n})|^2\big)
=c\,\norms{E_{k,n}}^2_{\eps^{-1}\divspace(\om_k)}
\end{align*}
shows that $(\hat E_{k,n})$ is bounded in $\hat\eps^{-1}_k\divgen{\hat\gamma_{\nu}}{\circ}(\Xi)$.
Thus $(\hat E_{k,n})$ is bounded in 
$$\rotgen{\hat\gamma_{\tau}}{\circ}(\Xi)\cap\hat\eps^{-1}_k\divgen{\hat\gamma_{\nu}}{\circ}(\Xi)
\subset\rotgen{\hat\gamma_{\tau}}{\circ}(\Xi)\cap\hat\eps^{-1}_k\dcng,\quad
\gamma_{\nu}\in\{\emptyset,B_{0},B_{0,+}\},\quad
\hat\gamma_{\tau}=\gamma\setminus\ol\gamma_{\nu}.$$
Therefore, w.l.o.g. $\hat E_{k,n}\xrightarrow{n\to\infty}\hat E_k$ in $\Lt(\Xi)$
with some $\hat E_k\in\Lt(\Xi)$ by Theorem \ref{HSG}. Let
$$E_{k,n}:=\utilde J{}^{-\top}_k\utilde{\hat E}{}_{k,n}\in\Lt(\om_{k}),\qquad
E_{k}:=\utilde J{}^{-\top}_k\utilde{\hat E}{}_{k}\in\Lt(\om_{k})$$
and derive
\begin{align*}
\norms{E_{k,n}-E_{k}}^2_{\Lt(\om_k)} 
&=\int_{\Xi}\norm{\det J_k}|\tilde E_{k,n}-\tilde E_{k}|^2
=\int_{\Xi} \norm{\det J_k} |J^{-\top}_k\hat E_{k,n}-J^{-\top}_k\hat E_{k}|^2 \\
&\leq c\int_{\Xi}|\hat E_{k,n}-\hat E_{k}|^2
=c\,\norms{\hat E_{k,n}-\hat E_{k}}^2_{\Lt(\Xi)}.
\end{align*}
Hence $E_{k,n}\xrightarrow{n\to\infty}E_{k}$ in $\Lt(\om_k)$ and 
$E_{k,n}\xrightarrow{n\to\infty}E_{k}$ in $\Ltom$ for their extensions by zero to $\om$.
Finally 
$E_n=\sum_{k}\chi_kE_n=\sum_{k}E_{k,n}\xrightarrow{n\to\infty}\sum_{k}E_{k}$ in $\Ltom$.
\end{proof}

By the same but much simpler arguments
Rellich's selection theorem holds for weak Lipschitz domains $\om$
resp. weak Lipschitz pairs $(\om,\Gamma_{\tau})$.
Therefore, also Poincar\'e's estimate holds in this case
by a standard indirect argument.

\begin{theo}
\label{rellich}
Rellich's selection theorem and the Poincar\'e-Friedrichs estimate hold. More precisely:
\begin{itemize}
\item[\bf(i)] The embedding $\Hoctom\hookrightarrow\Ltom$ is compact.
\item[\bf(ii)] There exists a constant $c_{\mathsf{p}}>0$, 
such that $\norms{u}_{\Ltom}\leq c_{\mathsf{p}}\,\normsLtom{\na u}$ holds for all $u\in\Hoctom$
or all $u\in\Hoperpom$, if $\Gamma_\tau=\emptyset$.
\end{itemize}
\end{theo}

\section{Applications}

From now on let $\om\subset\rt$ be a bounded domain and
let $(\om,\Gamma_{\tau})$ be a weak Lipschitz pair as well as $\eps\in\Liom$ be admissible. 

\subsection{The Maxwell estimate}

A first consequence of the compact embedding Theorem \ref{satzMKE}, i.e.,
$$\rctom\cap\eps^{-1}\dcnom\hookrightarrow\Ltom$$
is that the space of so-called `Dirichlet-Neumann fields'
$$\harmdieps:=\rcztom\cap\eps^{-1}\dcznom$$
is finite dimensional because the unit ball in $\harmdieps$ is compact.
By a standard indirect argument Theorem \ref{satzMKE} immediately implies the so-called Maxwell estimate:

\begin{theo}
\label{MA}
There exists a constant $c_{\mathsf{m}}>0$, 
such that for all $E\in\rctom\cap\eps^{-1}\dcnom\cap\harmdieps^{\perp_\eps}$
\begin{align*}
\norms{E}_{\Ltepsom}
\leq c_{\mathsf{m}}\,\big(\normsLtom{\rot E}^2
+\normsLtom{\div(\eps E)}^2\big)^{1/2}.
\end{align*}
\end{theo}

\vspace*{1mm}
Here we introduce $\Ltepsom:=\Ltom$ equipped with the scalar product 
$\scp{\,\cdot\,}{\,\cdot\,}_{\Ltepsom}:=\scpLtom{\eps\,\cdot\,}{\,\cdot\,}$.

\begin{proof}
Suppose the estimate does not hold. Then there exists a sequence 
$$(E_{n})\subset\rctom\cap\eps^{-1}\dcnom\cap\harmdieps^{\perp_\eps}$$
with $\norms{E_{n}}_{\Ltepsom}=1$ and 
$$\normsLtom{\rot E_{n}}+\normsLtom{\div(\eps E_{n})}\to0.$$
Since $(E_{n})$ is bounded in $\rom\cap\eps^{-1}\dom$,
by Theorem \ref{satzMKE} there exists a $\Ltom$-converging
subsequence, again denoted by $(E_{n})$, with $E_{n}\to E\in\Ltom$.
But then $E_{n}\to E$ in $\rom\cap\eps^{-1}\dom$ and we see
$E\in\rzom\cap\eps^{-1}\dzom$.
As $\rctom\cap\eps^{-1}\dcnom\cap\harmdieps^{\perp_\eps}$ 
is a closed subspace of $\rom\cap\eps^{-1}\dom$,
we get 
$$E\in\rcztom\cap\eps^{-1}\dcznom\cap\harmdieps^{\perp_\eps}
=\harmdieps\cap\harmdieps^{\perp_\eps}
=\{0\},$$ 
a contradiction to $1=\norms{E_{n}}_{\Ltepsom}\to\norms{E}_{\Ltepsom}=0$.
\end{proof}

\subsection{Helmholtz decompositions}

Applying the projection theorem to the linear and closed operator 
$\na_{\tau}:\Hoctom\subset\Ltom\rightarrow\Ltepsom$ with adjoint 
$-\div_{\nu}\eps=\na_{\tau}^{*}:\eps^{-1}\cdcnom\subset\Ltepsom\rightarrow\Ltom$ yields
\begin{align}
\label{hzgrad}
\Ltom=\overline{\na\Hoctom}\oplus_\eps\eps^{-1}\cdcznom,
\end{align}
and $\cdcznom=\dcznom$ follows by Theorem \ref{theoweakeqstrong}. 
Here $\oplus_\eps$ denotes the orthogonal sum in $\Ltepsom$.
On the other hand, for the closed linear operator $\eps^{-1}\rot_{\nu}:\rcnom\subset\Ltom\rightarrow\Ltepsom$
with adjoint $\rot_{\tau}:=(\eps^{-1}\rot_{\nu})^{*}:\crctom\subset\Ltepsom\rightarrow\Ltom$ we get 
\begin{align}
\label{hzrot}
\Ltom=\crcztom\oplus_\eps\eps^{-1}\overline{\rot\rcnom},
\end{align}
and again $\crcztom=\rcztom$ by Theorem \ref{theoweakeqstrong}.
Since
$$\na\Hoctom\subset\rcztom,\qquad
\rot\rcnom\subset\dcznom,$$
\eqref{hzgrad} and \eqref{hzrot} yield
$$\rcztom
=\overline{\na\Hoctom}\oplus_\eps\harmdieps,\qquad
\eps^{-1}\cdcznom
=\harmdieps\oplus_\eps\eps^{-1}\overline{\rot\rcnom},$$
and hence by \eqref{hzgrad} or \eqref{hzrot} the refined decomposition
\begin{align*}
\Ltom = \overline{\na\Hoctom} \oplus_\eps \harmdieps \oplus_\eps \eps^{-1}\overline{\rot\rcnom}
\end{align*}
follows. Again from \eqref{hzgrad} and \eqref{hzrot} we obtain
\begin{align*}
\rctom
&=\rcztom\oplus_\eps\big(\rctom\cap\eps^{-1}\overline{\rot\rcnom}\big),\\
\eps^{-1}\dcnom
&=\big(\eps^{-1}\dcnom\cap\overline{\na\Hoctom}\big)\oplus_\eps\eps^{-1}\dcznom,
\end{align*}
and thus the further refinements
\begin{align}
\label{rotRHZ}
\rot\rctom
&=\rot\big(\rctom\cap\eps^{-1}\overline{\rot\rcnom}\big)
=\rot\big(\rctom\cap\eps^{-1}\dcznom\cap\harmdieps^{\perp_\eps}\big),\\
\label{divDHZ}
\div\dcnom
&=\div\big(\dcnom\cap\eps\,\overline{\na\Hoctom}\big)
=\div\Big(\dcnom\cap\eps\,\big(\rcztom\cap\harmdieps^{\perp_\eps}\big)\Big).
\end{align}
hold. With the help of these representations the closedness 
of $\rot\rctom$ and $\div\dcnom$ follows immediately by Theorem \ref{MA}. 
The closedness of $\na\Hoctom$ follows with the standard Poincar\'e-Friedrichs inequality
from Theorem \ref{rellich} (ii).

\begin{lem}
\label{bildabg}
The range spaces $\na\Hoctom$, $\rot\rctom$ and $\div\dcnom$ are closed subspaces of $\Ltom$.
\end{lem}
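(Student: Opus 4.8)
We must show that $\na\Hoctom$, $\rot\rctom$ and $\div\dcnom$ are closed subspaces of $\Ltom$. The plan is to read off each closedness directly from the decompositions and estimates already established, so that no further analytic work is needed.

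The closedness of $\na\Hoctom$ follows from the Poincar\'e--Friedrichs estimate in Theorem \ref{rellich} (ii). Indeed, suppose $\na u_n\to F$ in $\Ltom$ with $u_n\in\Hoctom$. Replacing $u_n$ by $u_n$ (or, if $\Gamma_\tau=\emptyset$, by its orthogonal part in $\Hoperpom$) we may apply the estimate $\norms{u_n-u_m}_{\Ltom}\leq c_{\mathsf p}\,\normsLtom{\na(u_n-u_m)}$, which shows that $(u_n)$ is Cauchy in $\Ltom$ and hence, together with $(\na u_n)$ being Cauchy, that $(u_n)$ is Cauchy in $\Hoom$. Its limit $u$ lies in $\Hoctom$ since this space is closed in $\Hoom$ by definition \eqref{defstark}, and $\na u=F$. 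Thus $F\in\na\Hoctom$.

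For $\rot\rctom$ I would use the refined Helmholtz representation \eqref{rotRHZ}, namely
$$\rot\rctom=\rot\big(\rctom\cap\eps^{-1}\dcznom\cap\harmdieps^{\perp_\eps}\big).$$
Take a sequence $E_n$ in the indicated space with $\rot E_n\to G$ in $\Ltom$. Since $\div(\eps E_n)=0$, the Maxwell estimate of Theorem \ref{MA} gives
$$\norms{E_n-E_m}_{\Ltepsom}\leq c_{\mathsf m}\,\normsLtom{\rot(E_n-E_m)},$$
so $(E_n)$ is Cauchy in $\Ltepsom$, hence in $\Ltom$, and together with $(\rot E_n)$ Cauchy it converges to some $E$ in $\rom$. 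The subspace $\rctom\cap\eps^{-1}\dcznom\cap\harmdieps^{\perp_\eps}$ is closed in $\rom$ (each defining condition is preserved under $\rom$-convergence), so $E$ lies in it and $\rot E=G$, giving $G\in\rot\rctom$. The argument for $\div\dcnom$ is entirely dual: using \eqref{divDHZ} together with Theorem \ref{MA} applied to the irrotational piece yields the Maxwell estimate $\norms{E_n-E_m}_{\Ltepsom}\leq c_{\mathsf m}\,\normsLtom{\div(\eps(E_n-E_m))}$, from which closedness of $\div\dcnom$ follows in the same manner.

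The only subtlety, and the one point I would check carefully rather than the routine Cauchy arguments, is the applicability of Theorem \ref{MA}: the estimate is stated only on the $\eps$-orthogonal complement $\harmdieps^{\perp_\eps}$ of the finite-dimensional Dirichlet--Neumann space. This is exactly why the decompositions \eqref{rotRHZ} and \eqref{divDHZ} are phrased with the intersection against $\harmdieps^{\perp_\eps}$, so that the sequences to which we apply Theorem \ref{MA} automatically avoid the harmonic fields on which the estimate would fail. No further work beyond assembling these three ingredients is required.
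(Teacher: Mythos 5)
Your proposal is correct and follows essentially the same route as the paper: closedness of $\na\Hoctom$ via the Poincar\'e--Friedrichs estimate of Theorem \ref{rellich} (ii), and closedness of $\rot\rctom$ and $\div\dcnom$ via the refined representations \eqref{rotRHZ}, \eqref{divDHZ} combined with the Maxwell estimate of Theorem \ref{MA} applied to the potentials, which are Cauchy and converge in the respective closed subspaces. Your explicit remark about why the intersection with $\harmdieps^{\perp_\eps}$ is needed for Theorem \ref{MA} to apply is exactly the point the paper's proof uses implicitly.
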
 

\begin{proof}
Suppose $(H_{n})\subset\rot\rctom$ with $H_{n}\to H$ in $\Ltom$.
By the representation \eqref{rotRHZ} there exists a sequence 
$(E_{n})\subset\rctom\cap\eps^{-1}\dcznom\cap\harmdieps^{\perp_\eps}$
with $\rot E_{n}=H_{n}$. Theorem \ref{MA} yields
$$\normsLtom{E_n}\leq c_{\mathsf{m}}\normsLtom{H_n},$$
i.e., $(E_n)$ is a Cauchy sequence in $\rctom\cap\eps^{-1}\dcznom\cap\harmdieps^{\perp_\eps}$
and hence converges in $\rom$ to some $E\in\rctom\cap\eps^{-1}\dcznom\cap\harmdieps^{\perp_\eps}$. 
Thus $H\leftarrow\rot E_{n}\to\rot E\in\rot\big(\rctom\cap\eps^{-1}\dcznom\cap\harmdieps^{\perp_\eps}\big)$.
Analogously we show that $\div\dcnom$ is closed.
\end{proof}

Altogether we obtain

\begin{theo}
\label{HZ}
The following orthogonal decompositions hold:
\begin{align*}
\Ltom
&=\na\Hoctom\oplus_\eps \eps^{-1}\dcznom
=\rcztom\oplus_\eps\eps^{-1}\rot\rcnom\\
&=\na\Hoctom\oplus_\eps\harmdieps\oplus_\eps\eps^{-1}\rot\rcnom.
\end{align*}
Furthermore
\begin{align*}
\rot\rctom
&=\rot\big(\rctom\cap\eps^{-1}\rot\rcnom\big)
=\rot\big(\rctom\cap\eps^{-1}\dcznom\cap\harmdieps^{\perp_\eps}\big),\\
\div\dcnom
&=\div\big(\dcnom\cap\eps\na\Hoctom\big)
=\div\Big(\dcnom\cap\eps\big(\rcztom\cap\harmdieps^{\perp_\eps}\big)\Big)
\end{align*}
and
\begin{align*}
\na\Hoctom
&=\rcztom\cap\harmdieps^{\perp_\eps},&
\div\dcnom
&=\begin{cases}
\Ltom&\text{, if }\Gamma_{\nu}\neq\Gamma,\\
\Ltperpom&\text{, if }\Gamma_{\nu}=\Gamma,
\end{cases}\\
\rot\rcnom
&=\dcznom\cap\harmdieps^{\perp}.
\end{align*}
Moreover, the scalar $\na$- and vector $\rot$-, $\div$-potentials are uniquely determined
in $\Hoctom$ \big(or in $\Hoperpom$, if $\Gamma_\tau=\emptyset$\big),
$\rctom\cap\eps^{-1}\dcznom\cap\harmdieps^{\perp_\eps}$
and $\dcnom\cap\eps\big(\rcztom\cap\harmdieps^{\perp_\eps}\big)$, respectively,
and depend continuously on their respective images by the Poincar\'e-Friedrichs estimate,
see Theorem \ref{rellich} (ii), and Theorem \ref{MA}.
\end{theo}

\begin{rem}
\label{HZrem}
Under more restrictive assumptions on the weak Lipschitz pair $(\om,\Gamma_{\tau})$
there exists $\Hoom$-potentials as well, 
see Remarks \ref{gradphirot0rem}, \ref{remrotpot} and \ref{remdivpot}.
More precisely, let $\om\subset\rt$ be a bounded domain:
\begin{itemize}
\item[\bf(i)] 
If $\om$ is simply connected and
$(\om,\Gamma_{\tau})$ a weak Lipschitz pair, such that $\Gamma_{\tau}$ is connected,
then, since the Poincar\'e-Friedrichs estimate holds by Theorem \ref{rellich} (ii), 
a linear and continuous potential operator
$\mathcal{S}_{\na}:\rcztom\rightarrow\Hoctom$ exists
with $\na\mathcal{S}_{\na}E=E$ for all $E\in\rcztom$. Furthermore
$$\rcztom=\na\Hoctom,\qquad
\harmdieps=\{0\}.$$
Again, if $\Gamma_\tau=\emptyset$, we have to replace $\Hoctom$ by $\Hoperpom$.
Moreover, a linear and continuous potential operator
$\mathcal{S}_{\na}:\rcztom\rightarrow\Hoctom\cap\Ho(\rt)$
can be chosen, if $\om$ is even strong Lipschitz.
\item[\bf(ii)]
If $\om$ is a strong Lipschitz domain, such that $\rt\setminus\ol\om$ is connected (i.e. $\Gamma$ is connected),
and if $\Gamma_\nu=\bigcup_{k=1}^{K}\Gamma_{\nu,k}$, $K\in\nz$, 
with disjoint, relatively open and simply connected strong Lipschitz surface patches $\Gamma_{\nu,k}\subset\Gamma$
satisfying $\dist (\Gamma_{\nu,k},\Gamma_{\nu,\ell})>0$ for all $1\leq k\neq\ell\leq K$,
then there exists a linear and continuous potential operator
$\mathcal{S}_{\mathsf{r}}:\dcznom\rightarrow\Hocnom\cap\Ho(\rt)$
with $\rot\mathcal{S}_{\mathsf{r}}H=H$ for all $H\in\dcznom$. Furthermore
$$\dcznom=\rot\Hocnom,\qquad
\harmdieps=\{0\}.$$
\item[\bf(iii)]
If $\om$ is a strong Lipschitz domain with 
$\Gamma_\nu=\overset{K}{\underset{k=1}{\dot\bigcup}}\Gamma_{\nu,k}$, $K\in\nz$,
where $\dist(\Gamma_{\nu,k},\Gamma_{\nu,\ell})>0$ for all $1\leq k\neq\ell\leq K$
and $\Gamma_{\nu,k}$ are $\Cont^{3}$-boundary patches allowing 
for $\Cont^{3}$-regular extensions $\hat\om_{\nu,k}$
having connected complements $\rt\setminus\ol{\hat\om}_{\nu,k}$,
then there exists a linear and continuous potential operator
$\mathcal{S}_{\mathsf{d}}:\Ltom\rightarrow\Hocnom\cap\Ho(\rt)$
with $\div\mathcal{S}_{\mathsf{d}}u=u$ for all $u\in\Ltom$. Furthermore
$$\Ltom=\div\Hocnom.$$
If $\Gamma_\nu=\Gamma$, we have to replace $\Ltom$ by $\Ltperpom$.
\item[\bf(iii')] 
As noted earlier at the end of Section \ref{secpotwithbc}, the results of (iii) are not optimal.
Using different techniques from \cite[Lemma 2.1.1]{sohrbook}, it has been shown 
in \cite[Lemma 3.2]{bauerneffpaulystarkedevDivsymCurl}, that the assertions of (iii)
hold for any strong Lipschitz pair $(\om,\Gamma_{\nu})$.
\end{itemize}
\end{rem}

We will point out how the potentials in the latter Helmholtz decompositions can be computed.
By Theorem \ref{HZ} any vector field $E\in\Ltom$ can be written as
\begin{align*}
E = E_\na + E_{\mathcal{H}} + \eps^{-1}E_{\mathsf{r}} 
\in \na\Hoctom \oplus_\eps \harmdieps \oplus_\eps\eps^{-1}\rot\rcnom.
\end{align*}
Interchanging the roles of $\Gamma_{\nu}$ und $\Gamma_{\tau}$ 
in the decompositions of Theorem \ref{HZ} yields (with $\eps = \id$)
\begin{align}
\label{rotrangewithtilde}
\rot\rcnom 
=\rot\mathsf{X}(\om),\qquad
\mathsf{X}(\om)
:=\rcnom\cap\rot\rctom
=\rcnom\cap\dcztom\cap\tilde{\mathcal{H}}(\om)^\perp,
\end{align}
where $\tilde{\mathcal{H}}(\om) = \rcznom \cap\dcztom$, 
which is also finite dimensional.
Hence $E_\na = \na u$ and $E_{\mathsf{r}}=\rot H$ with uniquely determined
$$u\in\Hoctom,\qquad
H\in\mathsf{X}(\om),$$
i.e., $E$ can be written as
\begin{align}
\label{HZE}
E = \na u + E_{\mathcal{H}} + \eps^{-1}\rot H.
\end{align}
In order to calculate $u$ we test \eqref{HZE} with 
$\na \varphi$, $\varphi\in\Hoctom$, and due to orthogonality we get
\begin{align}
\label{varu}
\forall\,\varphi\in\Hoctom\qquad
\scpLtom{\eps\na u}{\na \varphi} 
= \scpLtom{\eps E}{\na\varphi}.
\end{align}
In case $\Gamma_{\tau}=\emptyset$ we set again $\Hoctom:=\Hoperpom$.
By Poincar\'e's estimate the Lax-Milgram lemma or simply Riesz' representation theorem 
yields a unique $u\in\Hoctom$ satisfying \eqref{varu} and 
$$\norms{u}_{\Hoom} \leq c\,\normsLtom{E}.$$
To calculate $H\in \mathsf{X}(\om)$ 
we test \eqref{HZE} with $\rot\Phi$, $\Phi\in \mathsf{X}(\om)$, 
and again get by orthogonality
\begin{align}
\label{varA}
\forall\,\Phi\in \mathsf{X}(\om)\qquad
\scpLtom{\eps^{-1}\rot H}{\rot\Phi} 
= \scpLtom{E}{\rot\Phi}.
\end{align}
Due to the Maxwell estimate, i.e., Theorem \ref{MA}, 
$\scpLtom{\eps^{-1}\rot\,\cdot\,}{\rot\,\cdot\,}$ is a coercive bilinear form
or even a scalar product on $\mathsf{X}(\om)$ 
and Lax-Milgram's lemma or Riesz' representation theorem 
yields a unique $H\in \mathsf{X}(\om)$, 
satisfying \eqref{varA} and
$$\norms{H}_{\rom}\leq c\,\normsLtom{E}.$$
\eqref{varu} shows by definition and Theorem \ref{theoweakeqstrong} 
$$\eps(E-\na u)\in\cdcznom=\dcznom.$$
Furthermore \eqref{varA} holds for all $\Phi\in\rcnom$ by \eqref{rotrangewithtilde} as well.
Hence by definition and Theorem \ref{theoweakeqstrong} we obtain
$$E-\eps^{-1}\rot H\in\crcztom=\rcztom.$$
Finally the Dirichlet-Neumann field $E_{\mathcal{H}}$ is given by
$$E_{\mathcal{H}}:=E-E_\na-\eps^{-1}E_{\mathsf{r}}=E-\na u-\eps^{-1}\rot H
\in\harmdieps$$
as
$$E-\na u,\eps^{-1}\rot H\in\eps^{-1}\dcznom,\qquad
E-\eps^{-1}\rot H,\na u\in\rcztom.$$

\begin{rem}
\eqref{varu} is the variational formulation of the classical boundary value problem
\begin{align}
\div\eps\na u&=\div\eps E&&\text{in }\om,\label{potu1}\\
u&=0&&\text{on }\Gamma_{\tau},\non\\
n\cdot\eps\na u&=n\cdot\eps E&&\text{on }\Gamma_{\nu}\label{potu3},
\end{align}
because in the smooth case \eqref{varu} yields for all $\varphi\in\Hoctom$
$$0=\scpLtom{\eps(\na u-E)}{\na\varphi}
=-\underbrace{\scpLtom{\div\eps(\na u-E)}{\varphi}}_{=0}
+\int_{\Gamma_{\nu}}(n\cdot\eps(\na u- E))\varphi.$$
Here, \eqref{potu1} has a proper meaning not only in $\Sobolev^{-1}(\om)=\big(\hocom\big)'$
but also in $\big(\Hoctom\big)'$, 
which gives meaning to the Neumann boundary condition \eqref{potu3} in this dual space as well.
\eqref{varA} is the variational formulation of the classical boundary value problem
\begin{align}
\rot\eps^{-1}\rot H&=\rot E&&\text{in }\om,\label{potH1}\\
\div H&=0&&\text{in }\om,\non\\
n\times H&=0&&\text{on }\Gamma_{\nu},\non\\
n\cdot H&=0&&\text{on }\Gamma_{\tau},\non\\
n\times\eps^{-1}\rot H&=n\times E&&\text{on }\Gamma_{\tau},\label{potH5}\\
H&\perp\tilde{\mathcal{H}}(\om),\non
\end{align}
because in the smooth case \eqref{varA} yields for all $\Phi\in\rcnom$
$$0=\scpLtom{\eps^{-1}(\rot H - \eps E)}{\rot\Phi}
= \underbrace{\scpLtom{\rot\eps^{-1}(\rot H - \eps E)}{\Phi}}_{=0}
+\int_{\Gamma_\tau} (n\times \eps^{-1}(\rot H -\eps E)\cdot \Phi.$$
\eqref{potH1} has proper meaning not only in $\Sobolev^{-1}(\om)=\big(\hocom\big)'$
but also in $\big(\Hocnom\big)'$ or even in $\big(\rcnom\big)'$, 
which gives meaning to the Neumann boundary condition \eqref{potH5} in this dual space as well.
\end{rem}

\subsection{Static solution theory}

As a further application we turn to the boundary value problem 
of electro- and magnetostatics with mixed boundary values: 
Let $F\in\Ltom$, $g\in\Ltom$, 
$E_\tau\in\rotspace(\om)$, $\eps^{-1}E_\nu\in\divspace(\om)$ and $\eps$ be admissible. 
Find $E\in\rom\cap\eps^{-1}\dom$ with
\begin{align}
\label{EMSRWProt}
\rot E &= F,\\
\label{EMSRWPdiv}
\div \eps E &= g,\\
\label{EMSRWPtang}
E - E_{\tau} &\in\rctom,\\
\label{EMSRWPnorm}
\eps(E - E_{\nu}) &\in\dcnom.
\end{align}
For uniqueness, we require the additional conditions
\begin{align}
\label{ONBproj}
\scpLtom{\eps E}{D_\ell} = \alpha_\ell \in\rz,\quad \ell=1,\dots,d,
\end{align}
where $d$ is the dimension and $\{D_\ell\}$ an $\eps$-orthonormal basis of $\harmdieps$.
The boundary values on $\Gamma_{\tau}$ and $\Gamma_{\nu},$ respectively, 
are realized by the given fields $E_{\tau}$ and $E_{\nu}$, respectively.

Let us solve the problem: Theorem \ref{HZ} yields
\begin{align*}
\rom&=\rcztom\oplus_\eps\big(\rom\cap\eps^{-1}\rot\rcnom\big)
=\rcztom\oplus_\eps\big(\rom\cap\eps^{-1}\dcznom\cap\harmdieps^{\perp_\eps}\big).
\end{align*}
The decomposition $E_{\tau} = E_{\tau,\mathsf{r}} + E_{\tau,\mathsf{d}}$ 
with $ E_{\tau,\mathsf{r}} \in \rcztom $ 
and $ E_{\tau,\mathsf{d}} \in \rom\cap\eps^{-1}\dcznom\cap\harmdieps^{\perp_\eps}$ shows
$$E-E_{\tau,\mathsf{d}} 
= E - E_{\tau} + E_{\tau} - E_{\tau,\mathsf{d}} 
= E - E_{\tau} + E_{\tau,\mathsf{r}}\in\rctom.$$
Hence
$$E-E_{\tau}\in\rctom 
\qequi 
E-E_{\tau,\mathsf{d}}\in\rctom,$$
which means the field $E_{\tau}$ realizing the boundary values on $\Gamma_{\tau}$ 
can w.l.o.g.~be chosen from the more regular space $\rom\cap\eps^{-1}\dcznom\cap\harmdieps^{\perp_\eps}$.
Similarly
\begin{align*}
\eps^{-1}\dom
=\eps^{-1}\dcznom\oplus_\eps\big(\eps^{-1}\dom\cap\na\Hoctom\big)
=\eps^{-1}\dcznom\oplus_\eps\big(\eps^{-1}\dom\cap\rcztom\cap\harmdieps^{\perp_\eps}\big)
\end{align*}
shows for $E_{\nu} = E_{\nu,\mathsf{d}} + E_{\nu,\mathsf{r}}$
with $E_{\nu,\mathsf{d}} \in \eps^{-1}\dcznom $
and $E_{\nu,\mathsf{r}} \in \eps^{-1}\dom\cap\rcztom\cap\harmdieps^{\perp_\eps} $ 
$$E-E_{\nu,\mathsf{r}} 
= E - E_{\nu} + E_{\nu} - E_{\nu,\mathsf{r}} 
= E - E_{\nu} + E_{\nu,\mathsf{d}} \in\eps^{-1}\dcnom.$$
Hence
$$E-E_{\nu}\in\dcnom 
\qequi 
E-E_{\nu,\mathsf{r}}\in\eps^{-1}\dcnom,$$
so the field $E_{\nu}$ can w.l.o.g. be chosen from $\eps^{-1}\dom\cap\rcztom\cap\harmdieps^{\perp_\eps}$.
Therefore we will work with the boundary conditions 
$$E_{\tau,\mathsf{d}}\in\rom\cap\eps^{-1}\dcznom\cap\harmdieps^{\perp_\eps},\qquad
E_{\nu,\mathsf{r}}\in\eps^{-1}\dom\cap\rcztom\cap\harmdieps^{\perp_\eps}$$
and observe
$$\norms{E_{\tau,\mathsf{d}}}_{\Ltepsom}\leq\norms{E_{\tau}}_{\Ltepsom},\quad
\rot E_{\tau,\mathsf{d}}=\rot E_{\tau},\qquad
\norms{E_{\nu,\mathsf{r}}}_{\Ltepsom}\leq\norms{E_{\nu}}_{\Ltepsom},\quad
\div\eps E_{\nu,\mathsf{r}}=\div\eps E_{\nu}.$$
We first note that the system admits at most one solution, 
as for the homogeneous problem $E\in\harmdieps$ together with \eqref{ONBproj} yield $E=0$. 
Turning to existence the conditions
$$\rot (E-E_{\tau,\mathsf{d}})=F-\rot E_{\tau,\mathsf{d}}=:\tilde F \in\rot\rctom,\quad
\div \eps (E-E_{\nu,\mathsf{r}}) = g-\div\eps E_{\nu,\mathsf{r}}=:\tilde g \in\div\dcnom$$
are necessary and from now on shall be assumed. By setting
$$\tilde{E} := E - E_{\tau,\mathsf{d}} - E_{\nu,\mathsf{r}}$$
the problem is transformed into a problem with homogenous boundary conditions, 
i.e., $\tilde E$ must solve
\begin{align}
\label{EMSRWProthomrand}
\rot \tilde E &= \tilde F,\\
\label{EMSRWPdivhomrand}
\div \eps\tilde E &= \tilde g,\\
\label{EMSRWPhomrand}
\tilde E &\in \rctom\cap\eps^{-1}\dcnom.
\end{align}
Necessary conditions for the existence of solutions are 
$$\tilde F \in \rot\rctom, \quad \tilde g \in \div \dcnom,$$
which have been assumed above. The conditions are already sufficient, 
as Theorem \ref{HZ} shows the existence of fields 
$$\tilde{E}_{\mathsf{r}}
\in\rctom\cap\eps^{-1}\dcznom\cap\harmdieps^{\perp_\eps},\qquad
\eps\tilde{E}_{\mathsf{d}}
\in\dcnom\cap\eps\big(\rcztom\cap\harmdieps^{\perp_\eps}\big)$$
with $\rot \tilde E_{\mathsf{r}} = \tilde F$ and $\div\eps\tilde E_{\mathsf{d}} = \tilde g$. Then
$$\tilde E:=\tilde E_{\mathsf{r}} + \tilde E_{\mathsf{d}} 
\in\rctom\cap\eps^{-1}\dcnom\cap\harmdieps^{\perp_\eps}$$
solves the system \eqref{EMSRWProthomrand}-\eqref{EMSRWPhomrand} 
with $\tilde E \perp_\eps \harmdieps$. Therefore 
$$E_0:=\tilde E+ E_{\tau,\mathsf{d}} + E_{\nu,\mathsf{r}}
\in\rom\cap\eps^{-1}\dom\cap\harmdieps^{\perp_\eps}$$ 
solves \eqref{EMSRWProt}-\eqref{EMSRWPnorm} 
with $E_0 \perp_\eps\harmdieps$, i.e., \eqref{ONBproj} with $\alpha = 0$. Finally 
$$E:=E_0 + \sum_{\ell} \alpha_\ell D_\ell\in\rom\cap\eps^{-1}\dom$$ 
solves \eqref{EMSRWProt}-\eqref{ONBproj}. 
Furthermore, $ E $ depends continuously on the data by Theorem \ref{MA}. 
More precisely, equipping $\rom$ and $\eps^{-1}\dom$ with the norms
$$\norms{\,\cdot\,}_{\rom}^2:=\norms{\,\cdot\,}_{\Ltepsom}^2+\normsLtom{\rot\,\cdot\,}^2,\quad
\norms{\,\cdot\,}_{\eps^{-1}\dom}^2:=\norms{E\,\cdot\,}_{\Ltepsom}^2+\normsLtom{\div\eps\,\cdot\,}^2,$$
we have
\begin{align*}
\norms{E}_{\rom\cap\eps^{-1}\dom}^2
&=\norms{E}_{\Ltepsom}^2
+\normsLtom{F}^2
+\normsLtom{g}^2,\\
\norms{E}_{\Ltepsom}^2
&=\norms{E_{0}}_{\Ltepsom}^2
+|\alpha|^2,\\
\normsLtom{E_{0}}
&\leq\norms{\tilde E}_{\Ltepsom}
+\norms{E_{\tau,\mathsf{d}}+E_{\nu,\mathsf{r}}}_{\Ltepsom},\\
\norms{E_{\tau,\mathsf{d}}+E_{\nu,\mathsf{r}}}_{\Ltepsom}^2
&=\norms{E_{\tau,\mathsf{d}}}_{\Ltepsom}^2
+\norms{E_{\nu,\mathsf{r}}}_{\Ltepsom}^2
\leq\norms{E_{\tau}}_{\Ltepsom}^2
+\norms{E_{\nu}}_{\Ltepsom}^2
\end{align*}
and by the Maxwell estimate Theorem \ref{MA}
$$\norms{\tilde E}_{\Ltepsom}
\leq c_{\mathsf{m}}\,\big(\normsLtom{\tilde F}
+\normsLtom{\tilde g}\big)
\leq c_{\mathsf{m}}\,\big(\normsLtom{F}
+\normsLtom{g}
+\normsLtom{\rot E_{\tau}}
+\normsLtom{\div\eps E_{\nu}}\big).$$
Thus there exists a constant $\tilde c_{\mathsf{m}}>0$,
just depending on $c_{\mathsf{m}}$, such that
\begin{align*}
\norms{E}_{\rom\cap\eps^{-1}\dom}
&\leq \tilde c_{\mathsf{m}}\,\big(\normsLtom{F}
+\normsLtom{g}
+\norms{E_{\tau}}_{\rom}
+\norms{E_{\nu}}_{\eps^{-1}\dom}
+|\alpha|\big).
\end{align*}
Our latter results show that the linear solution operator with 
$$(F,g,E_{\tau},E_{\nu},\alpha)\mapsto E$$
maps continuously onto $\rom\cap\eps^{-1}\dom$.
Let us summarize:

\begin{theo}
\label{satzloesung}
\eqref{EMSRWProt}-\eqref{EMSRWPnorm} admits a solution, if and only if 
$$E_{\tau}\in\rom,\quad 
E_{\nu}\in\eps^{-1}\dom,\quad 
F-\rot E_{\tau}\in\rot\rctom,\quad 
g-\div\eps E_{\nu}\in\div\dcnom.$$ 
The solution $E\in\rom\cap\eps^{-1}\dom$ can be chosen in a way, 
such that condition \eqref{ONBproj} with $\alpha\in\rz^d$ is satisfied, 
which then uniquely determines the solution. 
Furthermore the solution depends linearly and continuously on the data.
\end{theo}

We note that with Theorem \ref{HZ} the ranges can be described by
$$\rot\rcnom
=\dcznom\cap\harmdieps^{\perp},\qquad
\div\dcnom
=\begin{cases}
\Ltom&\text{, if }\Gamma_{\nu}\neq\Gamma,\\
\Ltperpom&\text{, if }\Gamma_{\nu}=\Gamma.
\end{cases}$$

For homogeneous boundary data, i.e., $E_{\tau}=E_{\nu}=0$, 
we can state a sharper result: The linear static Maxwell-operator
$$\Abb{M}{\rctom\cap\eps^{-1}\dcnom}{\rot\rctom\times\div\dcnom\times\rz^d}
{E}{\big(\rot E,\div\eps E,(\scpLtom{\eps E}{D_\ell})_{\ell=1}^{d}\big)}$$
is a topological isomorphism. Its inverse $M^{-1}$ maps not only continuously
onto $\rctom\cap\eps^{-1}\dcnom$, but also compactly into $\Ltom$ by Theorem \ref{satzMKE}.
For homogeneous kernel data, i.e., for 
$$\Abb{M_{0}}{\rctom\cap\eps^{-1}\dcnom\cap\harmdieps^{\perp_\eps}}{\rot\rctom\times\div\dcnom}
{E}{(\rot E,\div\eps E)}$$
we have $\norms{M_{0}^{-1}}\leq(c_{\mathsf{m}}^2+1)^{1/2}$.

\appendix

\section{Proof of Lemma \ref{Hoct}}

We will show the density result for $ \Ho $-functions, Lemma \ref{Hoct},
which is first proved for a flat boundary in Lemma \ref{lemmahalfspace} 
and then generalized to weak Lipschitz pairs in Lemma \ref{Hoct}. 
Although both proofs can be found in \cite[Lemma 2,3]{jochmanncompembmaxmixbc},
we repeat them here, using our notation and with some major simplifications.
Let us introduce the following notations:
$$\rz_{\pm}^3:=\{x\in\rt:\pm x_3>0\},\qquad
\rt_0:=\{x\in\rt:x_3=0\},\qquad
\rt_{0,-}:=\{x\in\rt_0:x_1<0\}$$

\begin{lem}
\label{lemmahalfspace}
Suppose $u_-\in\Ho(\rt_-)$ with compact support and $u_-|_{\rt_{0,-}}=0$ in the sense of traces. Then 
$$\forall\,\delta>0\quad
\exists\,u\in\Cic(\rt\setminus \rt_{0,-})\qquad
\norms{u-u_-}_{\Ho(\rt_-)}<\delta.$$
\end{lem}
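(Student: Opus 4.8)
My plan is to deduce the statement from a single near-identity rigid motion applied to the zero-extension of $u_-$, followed by an even reflection and a mollification. First I would exploit the boundary condition to extend $u_-$ across the Dirichlet face. Writing $\rt_{0,+}:=\{x\in\rt_0:x_1>0\}$ and setting
\[
v:=\begin{cases}u_-&\text{in }\rt_-,\\ 0&\text{in }\rz_+^3,\end{cases}
\]
the vanishing trace $u_-|_{\rt_{0,-}}=0$ makes the traces of the two pieces agree along $\rt_{0,-}$, so that $v\in\Ho(\rt\setminus\ol{\rt_{0,+}})$; it has compact support, is identically $0$ on $\rz_+^3$, and its only jump set is the closed half-plane $\ol{\rt_{0,+}}$.

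Next I would tilt and shift $v$. For small $\alpha\in(0,\pi/2)$ and $s>0$ let $R_\alpha$ be the rotation $R_\alpha x=(x_1\cos\alpha+x_3\sin\alpha,\,x_2,\,-x_1\sin\alpha+x_3\cos\alpha)$ in the $(x_1,x_3)$-plane, put $T(x):=R_\alpha(x-se_1)$, and define $g:=v\circ T$. Two elementary computations drive the argument. For $x\in\ol{\rt_{0,-}}$ (so $x_3=0$, $x_1\le0$) one has $(Tx)_3=-x_1\sin\alpha+s\sin\alpha\ge s\sin\alpha>0$, hence $Tx\in\rz_+^3$ and $g=0$; by continuity $g$ vanishes on a full neighbourhood of $\ol{\rt_{0,-}}$, and the extra margin $s\sin\alpha$ is exactly what clears the edge $\{x_1=0,\,x_3=0\}$. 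On the other hand $g$ can only jump across $T^{-1}(\ol{\rt_{0,+}})$; on the plane $\{(Tx)_3=0\}$ one computes $(Tx)_1=(x_1-s)/\cos\alpha$, so the condition $(Tx)_1\ge0$ forces $x_1\ge s$, and there $x_3=(x_1-s)\tan\alpha\ge0$. Thus $T^{-1}(\ol{\rt_{0,+}})\subset\{x_3\ge0\}$ never enters the open half-space $\rt_-$, and therefore $g\in\Ho(\rt_-)$.

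Since $T\to\mathrm{id}$ as $(\alpha,s)\to(0,0)$, $v\in\Ho(\rt\setminus\ol{\rt_{0,+}})$, and $T(\rt_-)\subset\rt\setminus\ol{\rt_{0,+}}$, strong continuity of composition with rigid motions in $\Ho$ gives $g\to u_-$ in $\Ho(\rt_-)$ (after the rigid change of variables the norm of $g$ equals that of $v$ over $T(\rt_-)$, so nothing blows up); I would fix $\alpha,s$ so small that $\norms{g-u_-}_{\Ho(\rt_-)}<\delta/2$. As $g$ vanishes near $\ol{\rt_{0,-}}$, the even reflection $G(x):=g(x_1,x_2,-|x_3|)$ lies in $\Ho(\rt)$, has compact support, satisfies $G|_{\rt_-}=g$, and still vanishes on a two-sided neighbourhood of $\ol{\rt_{0,-}}$ (reflecting the zero set across $\rt_0$). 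A standard mollification $u:=G*\varphi_\rho$ with $\rho<\dist(\supp G,\ol{\rt_{0,-}})$ (positive, since $\supp G$ is compact and disjoint from the closed set $\ol{\rt_{0,-}}$) then yields $u\in\Cic(\rt)$ with $\supp u\cap\ol{\rt_{0,-}}=\emptyset$, i.e. $u\in\Cic(\rt\setminus\rt_{0,-})$, and $\norms{u-g}_{\Ho(\rt_-)}<\delta/2$ for $\rho$ small, so $\norms{u-u_-}_{\Ho(\rt_-)}<\delta$.

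The delicate point — and the reason neither a single translation nor a single rotation suffices — is the interplay at the edge $\{x_1=0,\,x_3=0\}$. Pushing the support away from the Dirichlet face $\ol{\rt_{0,-}}$ seems to call for moving ``downward'' through $\rt_0$, but any downward component drags the jump set $\ol{\rt_{0,+}}$ into the open half-space $\rt_-$ and destroys the $\Ho$-property, while a pure rotation keeps the jump set in $\{x_3\ge0\}$ yet fixes the edge and so cannot clear it. The main obstacle is therefore to tune the tilt $\alpha$ and the lateral shift $s$ \emph{together} so that the two displayed inequalities hold simultaneously: the shift supplies the margin $s\sin\alpha>0$ that clears the closed face including its edge, and the rotation keeps the slit inside $\{x_3\ge0\}$.
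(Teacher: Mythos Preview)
Your argument is correct and reaches the same conclusion as the paper, but by a genuinely different geometric route. The paper works in axis-aligned steps: it first translates $u_-$ in the $+x_1$ direction to gain a strip $\{0<x_1<t\}$ on which the trace already vanishes, then builds an $\Ho(\rt)$ extension by even reflection into $\rt_+$ together with a cutoff (so that the reflected piece is zero for $x_1\le t/2$), and finally translates the whole thing in the $-x_3$ direction so that the zero region in $\rt_+$ is dragged down to cover a full neighbourhood of $\ol{\rt_{0,-}}$; mollification concludes. You instead exploit the Dirichlet trace at the outset to extend by zero into $\rz_+^3$ (which replaces the paper's reflect-and-cutoff step), and then achieve in a single rigid motion what the paper does with two separate translations: the lateral shift $s$ supplies the margin at the edge, while the tilt $\alpha$ keeps the jump set $T^{-1}(\ol{\rt_{0,+}})$ inside $\{x_3\ge 0\}$ and simultaneously swings the zero half-space $T^{-1}(\rz_+^3)$ over $\ol{\rt_{0,-}}$. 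The trade-off is that the paper's argument needs only one-dimensional bookkeeping for successive translations, whereas yours is more compact but requires the small computation locating the rotated slit; your observation that both $v$ and its weak gradient on $\rt\setminus\ol{\rt_{0,+}}$ are globally $L^2$ functions is exactly what makes the $\Ho(\rt_-)$-continuity of $g\mapsto v\circ T$ go through despite $v\notin\Ho(\rt)$.
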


\begin{proof}
Let $\delta>0$. For $t>0$ define $u_-^t(x):=u_-(x_1 - t, x_2, x_3)$ f.a.a. $ x \in \rt_- $,
the translation of $u_-$ in the direction of $x_1$. 
Then $u_-^t\in\Ho(\rt_-)$ and its trace satisfies
$$u_-^t(x)=0\text{ f.a.a. }
x\in\rt_{0}\text{ with }x_1\leq t.$$
Since the translation is continuous, there exists $t>0$ with
$\norms{u_-^t-u_-}_{\Ho(\rt_-)}<\delta/3$.
The reflection $u_+^t$ of $u_-^t$
defined by $u_+^t(x):=u_-^t(x_{1},x_{2},-x_3)$ for $x\in\rt_+$
defines an element of $\Ho(\rt_+)$ with
$u_+^t(x)=u_-^t(x)$ f.a.a. $x\in\rt_0$ and hence
$$u_+^t(x)=u_-^t(x)=0
\text{ f.a.a. }
x\in\rt_0\text{ with }x_1 \leq t.$$
A cut-off argument\footnote{Multiply $u_+^t(x)$
with some $\varphi(x_{1})$, where $\varphi\in\Ci(\rz,[0,1])$
with $\varphi|_{(-\infty,t/2)}=0$ and $\varphi|_{(t,\infty)}=1$.} 
yields a function $\tilde u_+^t\in\Ho(\rt_+)$ with
$$\tilde u_+^t(x)=0\text{ f.a.a. }x\in\rt_+
\text{ with }x_1\leq t/2,\qquad
\tilde u_+^t(x)=u_+^t(x)=u_-^{t}(x)\text{ f.a.a. }x\in\rt_0.$$
Hence
\begin{align*}
u^t:=
\begin{cases}
\tilde u_+^t&\text{in }\rt_+,\\
u_-^t&\text{in }\rt_-
\end{cases}
\end{align*}
defines an element in $\Ho(\rt)$, which vanishes f.a.a. $x\in\rt$
with $x_{3}\geq0$ and $x_{1}\leq t/2$. 
Let $u^{t,s}$ with $u^{t,s}(x):=u^t(x_{1},x_{2},x_3+s)$ for $x\in\rt$ 
be the translation of $u^t$ in the direction $-x_3$. 
Then $u^{t,s}\in\Ho(\rt)$ and there is $s>0$ with
$\norms{u^{t,s}-u^t}_{\Ho(\rt)}<\delta/3$.
Furthermore $u^{t,s}=0$ in a neighbourhood of $\rt_{0,-}$ even of
$\{x\in\rt_{+}\,:\,x_{1}<t/4\}$, more explicitly in
$$\big\{x\in\rt: x_3>-s~\text{and}~x_1 < \frac{t}{2}\big\} \supset \rt_{0,-}.$$
Since $\supp u^{t,s}\Subset\rt$, Friedrichs' mollification yields some 
$u\in\Cic(\rt\setminus \rt_{0,-})$ with $\norms{u-u^{t,s}}_{\Ho(\rt)} < \delta /3$.
Finally
\begin{align*}
\norms{u-u_-}_{\Ho(\rt_-)}
&\leq\norms{u-u^{t,s}}_{\Ho(\rt_-)}
+\norms{u^{t,s}-u^{t}}_{\Ho(\rt_-)}+\norms{u^{t}-u_-}_{\Ho(\rt_-)}\\
&\leq\norms{u-u^{t,s}}_{\Ho(\rt)}
+\norms{u^{t,s}-u^{t}}_{\Ho(\rt)}+\norms{u^{t}_{-}-u_-}_{\Ho(\rt_-)},
\end{align*}
completing the proof.
\end{proof}

\begin{proofof}{Lemma \ref{Hoct}}
We already know $\Hoctom\subset\cHoctom$. Any $u\in\cHoctom$ can be extended by zero into
an open neighbourhood of $\Gamma_{\tau}$, 
where the common boundary of $\om$ and the extension is exactly $\Gamma_{\tau}$, and stays in $\Ho$. 
For this, let us denote the extended domain by $\hat\om$ and the extended function by $\hat u$.
Then $\hat u$ belongs to $\Ho(\hat\om)$,
since for all $\varphi\in\Cic(\hat\om)$ it follows $\varphi\in\Cicnom$ and hence
$$\scp{\hat u}{\p_{i}\varphi}_{\Lt(\hat\om)}
=\scpLtom{u}{\p_{i}\varphi}
=-\scpLtom{\p_{i}u}{\varphi}
=-\scp{\widehat{\p_{i}u}}{\varphi}_{\Lt(\hat\om)}.$$
Since $\hat u$ is zero in the extension part of $\hat\om$, its trace vanishes on $\Gamma_{\tau}$
and coincides with the trace of $u$ itself, which shows $u\in\Hocttr(\om)$. Therefore, we have
$$\Hoctom\subset\cHoctom\subset\Hocttr(\om).$$
The reverse inclusion is proved with the help of Lemma \ref{lemmahalfspace}. 
Suppose $u\in\Hocttr(\om)$. Let $U_1,\dots,U_K\subset\rt$ 
be an open covering of $\Gamma$, see Definitions \ref{defilipmani} and \ref{defilipsubmani},
and set $U_0=\om$. Moreover, let $\chi_k$, $k=0,\dots,K$, denote a partition of unity 
subordinate to the open covering $U_0,\dots,U_K$ of $\ol\om$. 
Since $\supp\chi_{0}\Subset\om$, we obtain by mollification 
$u_0:=\chi_0u\in\Hocom\subset\Hoctom$.
Let $k=1,\dots,K$. Then it is sufficient to show that
$$u_{k}:=\chi_ku\in\Hocttr(\om)$$
belongs to $\Hoctom$, since then $u=\sum_{k=0}^{K}u_{k}\in\Hoctom$.
Hence, by Remark \ref{remlipclosure} (i), it is sufficient to show that $u_{k}$, $k=1,\dots,K$,
can be approximated in $\hoom$ by $\Czoct(\om)$ functions. 
We will utilize the notations of Subsection \ref{mcpweaklip}. Then
$$u_{k}\in\Hgentr{1}{{\hat\Gamma_{\tau,k}}}{\circ}(\om_{k}),\qquad
\tilde u_k:=u_k\circ \psi_k\in\Hgentr{1}{\hat\gamma_{\tau}}{\circ}(\Xi),$$
and there are three cases, i.e., $\gamma_{\tau}\in\{\emptyset,B_{0},B_{0,-}\}$, to be discussed.
Let $k=1,\dots,K$ and $\delta>0$.
\begin{description}
\item[$\gamma_{\tau}=B_{0}$] 
Then $\tilde u_k\in\Hgentr{1}{\gamma}{\circ}(\Xi)=\Hoctr(\Xi)=\Hoc(\Xi)$ 
by classical results. Hence there exists $\tilde\varphi\in\Cic(\Xi)$ 
with $\norms{\tilde\varphi-\tilde u_k}_{\Ho(\Xi)}<\delta$. 
Hence $\varphi:=\tilde\varphi\circ\phi_k\in\Czoc(\om_{k})\subset\Czoc(\om)\subset\Czoct(\om)$ 
and by Lemma \ref{lemtrafo} we obtain
\begin{align}
&\qquad\norms{\varphi-u_k}_{\hoom}^2
=\norms{\varphi-u_k}_{\Ho(\om_{k})}^2
=\int_{\om_k}\big(|\varphi - u_k|^2
+|\na\varphi-\na u_k|^2\big)\non\\
&=\int_{\Xi}|\det \psi_k'|\big(|\tilde\varphi-\tilde u_k|^2
+|\widetilde{\na\varphi}-\widetilde{\na u_k}|^2\big)
=\int_{\Xi}|\det\psi_k'|\Big(|\tilde\varphi-\tilde u_k|^2
+\big|(\psi_k')^{-\top}(\na\tilde\varphi-\na\tilde u_k)\big|^2\Big)\non\\
&\leq c\int_{\Xi}\big(|\tilde\varphi-\tilde u_k|^2
+|\na\tilde\varphi-\na\tilde u_k|^2\big)
=c\,\norms{\tilde\varphi - \tilde u_k}_{\Ho(\Xi)}^2
<c\,\delta^2.
\label{wstrafo}
\end{align}
\item[$\gamma_{\tau}=\emptyset$]
Then $\tilde u_k\in\Hgentr{1}{\sigma}{\circ}(\Xi)\subset\Ho(\Xi)$ 
and by Calderon's extension theorem $\tilde u_k$ can be identified with $\tilde u_k\in\Ho(\rt)$.
Thus there exists $\tilde\varphi\in\Cic(\rt)$ 
with $\norms{\tilde\varphi-\tilde u_k}_{\Ho(\rt)}<\delta$
and $\varphi:=\tilde\varphi\circ\phi_k \in\Czo(\om_{k})$.
Let $\eta_k\in\Cic(U_k)$ with $\eta_k=1$ on $\supp\chi_k$.
We set $\varphi_k:=\eta_k\varphi\in\Czoc_{\Sigma_{k}}(\om_{k})\subset\Czoct(\om)$
and note that $u_k =\eta_k u_k$ does not change.
With $\varphi_{k}-u_k=\eta_k(\varphi-u_{k})$ and \eqref{wstrafo} we get
$$\norms{\varphi_{k}-u_k}_{\hoom}
\leq\norms{\varphi-u_k}_{\Ho(\om_{k})}
\leq c\,\norms{\tilde\varphi - \tilde u_k}_{\Ho(\Xi)}
<c\,\delta.$$
\item[$\gamma_{\tau}=B_{0,-}$] 
Then $\tilde u_k\in\Hgentr{1}{\hat\gamma_{\tau}}{\circ}(\Xi)$ and we identify
$\tilde u_k$ with its extension by zero to $\rt_{-}$.
Therefore $\tilde u_k\in\Ho(\rt_-)$ has compact support in $\ol\Xi$ 
and satisfies $\tilde u_k|_{\rt_{0,-}}=0$. 
Then Lemma \ref{lemmahalfspace} yields some $\tilde\varphi\in\Cic(\rt\setminus \rt_{0,-})$ 
with $\norms{\tilde\varphi-\tilde u_k}_{\Ho(\rt_-)}<\delta$.
Hence $\tilde\varphi\in\Cic_{\gamma_{\tau}}(\Xi)$
and thus $\varphi:=\tilde\varphi\circ\phi_k\in\Czoc_{\Gamma_{\tau,k}}(\om_k)$.
Again, let $\eta_k\in\Cic(U_k)$ with $\eta_k=1$ on $\supp\chi_k$
and define $\varphi_k:=\eta_k\varphi\in\Czoc_{\hat\Gamma_{\tau,k}}(\om_k)\subset\Czoct(\om)$.
Noting $u_k =\eta_k u_k$ does not change 
and with $\varphi_{k}-u_k=\eta_k(\varphi-u_{k})$ and \eqref{wstrafo} we see
$$\norms{\varphi_{k}-u_k}_{\hoom}
\leq\norms{\varphi-u_k}_{\Ho(\om_{k})}
\leq c\,\norms{\tilde\varphi - \tilde u_k}_{\Ho(\Xi)}
<c\,\delta.$$
\end{description}
\end{proofof}

\section*{Acknowledgements}
We express our cordial gratitude to Immanuel Anjam 
for providing the graphics for the figures of this paper.


\bibliographystyle{plain} 
\bibliography{paule}

\begin{thebibliography}{10}

\bibitem{bauerneffpaulystarkedevDivsymCurl}
S.~Bauer, P.~Neff, D.~Pauly, and G.~Starke.
\newblock {D}ev-{D}iv- and {D}ev{S}ym-{D}ev{C}url-{I}nequalities for
  {I}ncompatible {S}quare {T}ensor {F}ields with {M}ixed {B}oundary
  {C}onditions.
\newblock {\em ESAIM Control Optim. Calc. Var.}, 22:112--133, 2016.

\bibitem{bauerpaulyschomburgmaxcompweaklip}
S.~Bauer, D.~Pauly, and M.~Schomburg.
\newblock The {M}axwell compactness property in bounded weak {L}ipschitz
  domains with mixed boundary conditions.
\newblock {\em SIAM J. Math. Anal.}, 48(4):2912--2943, 2016.

\bibitem{bauerpaulyschomburgmaxcompweakliprNarxiv}
S.~Bauer, D.~Pauly, and M.~Schomburg.
\newblock {W}eck's selection theorem: The {M}axwell compactness property for
  bounded weak {L}ipschitz domains with mixed boundary conditions in arbitrary
  dimensions.
\newblock {\em arXiv, https://arxiv.org/abs/1809.01192}, 2018.

\bibitem{bauerpaulyschomburgmaxcompweakliprN}
S.~Bauer, D.~Pauly, and M.~Schomburg.
\newblock {W}eck's selection theorem: The {M}axwell compactness property for
  bounded weak {L}ipschitz domains with mixed boundary conditions in arbitrary
  dimensions.
\newblock {\em MaxwellÕs Equations: Analysis and Numerics (Radon Series on
  Computational and Applied Mathematics), De Gruyter}, 2019.

\bibitem{costabelremmaxlip}
M.~Costabel.
\newblock A remark on the regularity of solutions of {M}axwell's equations on
  {L}ipschitz domains.
\newblock {\em Math. Methods Appl. Sci.}, 12(4):365--368, 1990.

\bibitem{eiduslabp}
D.M. Eidus.
\newblock On the principle of limiting absorption.
\newblock {\em Mat. Sb. (N.S.)}, 57(99):13--44, 1962.

\bibitem{eiduslamp}
D.M. Eidus.
\newblock On the limiting amplitude principle.
\newblock {\em Dokl. Akad. Nauk SSSR}, 158:794--797, 1964.

\bibitem{eiduslamptwo}
D.M. Eidus.
\newblock The principle of limiting amplitude.
\newblock {\em Uspehi Mat. Nauk}, 24(3 (147)):91--156, 1969.

\bibitem{jochmanncompembmaxmixbc}
F.~Jochmann.
\newblock A compactness result for vector fields with divergence and curl in
  ${L}^q({\Omega})$ involving mixed boundary conditions.
\newblock {\em Appl. Anal.}, 66:189--203, 1997.

\bibitem{kuhndiss}
P.~Kuhn.
\newblock {\em Die {M}axwellgleichung mit wechselnden {R}andbedingungen}.
\newblock Dissertation, Universit\"at Essen, Fachbereich Mathematik,
  http://arxiv.org/abs/1108.2028, {\it Shaker}, 1999.

\bibitem{kuhnpaulyregmax}
P.~Kuhn and D.~Pauly.
\newblock Regularity results for generalized electro-magnetic problems.
\newblock {\em Analysis (Munich)}, 30(3):225--252, 2010.

\bibitem{leisbook}
R.~Leis.
\newblock {\em Initial Boundary Value Problems in Mathematical Physics}.
\newblock Teubner, Stuttgart, 1986.

\bibitem{paulytimeharm}
D.~Pauly.
\newblock Low frequency asymptotics for time-harmonic generalized {M}axwell
  equations in nonsmooth exterior domains.
\newblock {\em Adv. Math. Sci. Appl.}, 16(2):591--622, 2006.

\bibitem{paulystatic}
D.~Pauly.
\newblock Generalized electro-magneto statics in nonsmooth exterior domains.
\newblock {\em Analysis (Munich)}, 27(4):425--464, 2007.

\bibitem{paulyasym}
D.~Pauly.
\newblock Complete low frequency asymptotics for time-harmonic generalized
  {M}axwell equations in nonsmooth exterior domains.
\newblock {\em Asymptot. Anal.}, 60(3-4):125--184, 2008.

\bibitem{paulydeco}
D.~Pauly.
\newblock {H}odge-{H}elmholtz decompositions of weighted {S}obolev spaces in
  irregular exterior domains with inhomogeneous and anisotropic media.
\newblock {\em Math. Methods Appl. Sci.}, 31:1509--1543, 2008.

\bibitem{paulypoly}
D.~Pauly.
\newblock On polynomial and exponential decay of eigen-solutions to exterior
  boundary value problems for the generalized time-harmonic {M}axwell system.
\newblock {\em Asymptot. Anal.}, 79(1-2):133--160, 2012.

\bibitem{picardpotential}
R.~Picard.
\newblock {R}andwertaufgaben der verallgemeinerten {P}otentialtheorie.
\newblock {\em Math. Methods Appl. Sci.}, 3:218--228, 1981.

\bibitem{picardboundaryelectro}
R.~Picard.
\newblock On the boundary value problems of electro- and magnetostatics.
\newblock {\em Proc. Roy. Soc. Edinburgh Sect. A}, 92:165--174, 1982.

\bibitem{picardcomimb}
R.~Picard.
\newblock An elementary proof for a compact imbedding result in generalized
  electromagnetic theory.
\newblock {\em Math. Z.}, 187:151--164, 1984.

\bibitem{picardlowfreqmax}
R.~Picard.
\newblock On the low frequency asymptotics in electromagnetic theory.
\newblock {\em J. Reine Angew. Math.}, 354:50--73, 1984.

\bibitem{picarddeco}
R.~Picard.
\newblock Some decomposition theorems and their applications to non-linear
  potential theory and {H}odge theory.
\newblock {\em Math. Methods Appl. Sci.}, 12:35--53, 1990.

\bibitem{picardweckwitschxmas}
R.~Picard, N.~Weck, and K.-J. Witsch.
\newblock Time-harmonic {M}axwell equations in the exterior of perfectly
  conducting, irregular obstacles.
\newblock {\em Analysis (Munich)}, 21:231--263, 2001.

\bibitem{sohrbook}
H.~Sohr.
\newblock {\em The {N}avier-{S}tokes Equations}.
\newblock Birkh\"auser, Basel, 2001.

\bibitem{webercompmax}
C.~Weber.
\newblock A local compactness theorem for {M}axwell's equations.
\newblock {\em Math. Methods Appl. Sci.}, 2:12--25, 1980.

\bibitem{weberregmax}
C.~Weber.
\newblock Regularity theorems for {M}axwell's equations.
\newblock {\em Math. Methods Appl. Sci.}, 3:523--536, 1981.

\bibitem{weckmax}
N.~Weck.
\newblock {M}axwell's boundary value problems on {R}iemannian manifolds with
  nonsmooth boundaries.
\newblock {\em J. Math. Anal. Appl.}, 46:410--437, 1974.

\bibitem{wecktrace}
N.~Weck.
\newblock Traces of differential forms on {L}ipschitz boundaries.
\newblock {\em Analysis (Munich)}, 24:147--169, 2004.

\bibitem{witschremmax}
K.-J. Witsch.
\newblock A remark on a compactness result in electromagnetic theory.
\newblock {\em Math. Methods Appl. Sci.}, 16:123--129, 1993.

\end{thebibliography}


\end{document}